 \newtheorem{theorem}{Theorem}[section]
\newtheorem{lemma}{Lemma}[section]
 \newtheorem{proposition}{Proposition}[section]
 \newtheorem{corollary}{Corollary}[section]
  \newtheorem{rem}{Remark}[section]
\begin{document}

\title{{\LARGE   Double scale analysis of periodic  solutions of some
 non linear  vibrating  systems
     } } 
 \author{
Nadia Ben Brahim \\
  University of Tunis, El Manar,  Ecole nationale
  d'ing\'enieurs de Tunis (ENIT), \\
Laboratory of  civil engineering (LGC), 
BP 37, 1002 Tunis Belv\'ed\`ere,
Tunisia\\ 
 and
Bernard Rousselet \\
 University of  Nice Sophia-Antipolis, Laboratory J.A. Dieudonn\'e \\ 
U.M.R.  C.N.R.S. 6621, Parc Valrose,  F 06108 Nice,
 C{e}dex 2, France\\
email: {\sf br\char64math.unice.fr} 
 }
\date{june 13, 2013}

\maketitle
\bibliographystyle{alpha}

\begin{abstract}
 We consider {\it small solutions} of a vibrating system with
smooth non-linearities for which we provide an approximate solution by
using a double scale analysis; a rigorous proof of convergence of a
double scale expansion is included; for the forced response, a stability
result is needed in order to prove convergence in a neighbourhood of a
primary resonance.
{\bf Keywords:} double scale analysis; periodic solutions; nonlinear
vibrations, resonance
{MSC: 34e13, 34c25, 74h10, 74h45}
\end{abstract}

\section{Introduction}
\label{sec:intro}

In this work we look for an asymptotic expansion of {\it small periodic solutions} of free vibrations of a discrete
structure without damping and  with local non linearity; then the
same system with light damping and  a {\it periodic forcing}
with frequency close to a frequency of the free system is
analyzed (primary resonance). For a small solution, we recover a behavior with some
similarity with the linear case; in particular the amplitude of the
forced response reaches a local maximum at the frequency of the free
response. On the other hand the frequency of the free response is
amplitude dependent and the superposition principle does not apply. 
The work of Lyapunov {\cite{lyapunov49}} is often cited as a basis for the
existence of periodic solutions which tends towards linear normal modes as
amplitudes tend to zero; the proof of this paper uses the \textit{hypothesis of
analycity} of the non linearity involved in the differential system. In
\cite{rousselet:hal-perio-lip}, we addressed the case of a non
linearity which is only lipschitzian and we prove existence of
periodic solutions with a constructive proof; in this case the result of Lyapunov
obviously may not be applied. 
Non-linearity of oscillations is a classical theme in theoretical
physics, for example at master level, see \cite{MR0102191} in Russian
or its English or French translation in \cite{MR0120782,MR0205515}.

Asymptotic expansions have been used for
a long time; such methods are introduced in the famous memoir of Poincar\'e
\cite{poincare92-99}; a general book on asymptotic methods is
\cite{Bogolyu-Mitropo-ru} with french and English translations \cite{Bogolyu-Mitropo-fr,Bogolyu-Mitropo-eng}; introductory material is in \cite{nayfeh81},   \cite{miller2006}; 
 a detailed account of the
averaging method with precise proofs of convergence may be found in
\cite{sanders-verhulst}; an analysis of several methods including
multiple scale expansion may be found in \cite{murdock91};
the case of vibrations with
unilateral springs have been presented in \cite{sj-brgdr08,junca-br10,vestroni08},
\cite{hr-brgdr08,hazim-tamtam09,hazim-ecssmt,hazimSmai,hazim-these}; in
\cite{jiang-pierre-shaw04} a numerical approach for large solutions of
piecewise linear systems is proposed.  The case of  rigid  contact which is also important from the point of view of theory and applications has been addressed in several papers,
 for example \cite{janin-lamarque01}, and a synthesis in \cite{Bastien-Bernard-lamarque}
.
A review paper for so called
``non linear normal modes'' may be found in \cite{nnm-kpgv}; it includes numerous
papers published by the mechanical community; several application
fields have been addressed by the mechanical community; for example in \cite{mikhlin10} ``nonlinear
vibro-absorption problem, the cylindrical shell nonlinear dynamics and
the vehicle suspension nonlinear dynamics are analyzed''.

In the mechanical engineering community  the
validity of the expansions is assumed to hold; however, this is not
straightforward as this kind of expansion is not a standard series
expansion and the expansion is usually not valid for all time; for example, this point has
been raised in \cite{rubenfeld78}. If the averaging method was
carefully analyzed as indicated above, it seems not to be the case for
the multiple scale method, the expansion of which is often compared to
the one obtained by the averaging method.

Here in a first stage we consider {\it small solutions} of a system with
smooth non-linearities for which we provide an approximate solution by
using a double scale analysis; a rigorous proof of convergence of the
method of
double scale is included; for the forced response, a stability
result is needed in order to prove convergence. As an introduction, the next section
addresses the  one degree of freedom case while the following one
considers many degrees of freedom; for free vibrations we find
solutions close to a linear normal mode (so called non linear normal
modes) and for forced vibrations, we describe the response for forcing
frequency close to a free vibration frequency.
Preliminary versions of these results may be found in
\cite{benbrahim-tamtam09} and have been presented in conferences
  \cite{benbrahimGdrafpac,benbrahimSmai}; related results have been
  presented in \cite{gasmiSmai}. Triple scale expansions is to be submitted
 \cite{nbb-br-mult}.
In a forthcoming paper, the non-smooth case will be considered as well
as a
numerical algorithm based on the fixed point method used in
\cite{rousselet:hal-perio-lip}.

\section{  One degree of freedom,  strong cubic non linearity}
In this section, we consider the case of a mass attached to a spring;
in the case of a stress-strain law of the form ${n}{}=k u+ m c u^2 +m
u^3$, we find no shift of frequency at first order, so here we
concentrate on a stress-strain law with a
stronger cubic non linearity:
$${n}{}=k u+ m c u^2 +m \frac{d}{\epsilon} u^3$$
where $\epsilon$ is a small parameter which is also involved in the size
of the solution as in previous paragraph; the choice of this scaling
provides frequencies which are amplitude dependent.
\subsection{ Free vibration,  double scale expansion up to first order}
Using second Newton law, free vibrations  of a mass attached to such a
spring are governed by:
\begin{equation}
  \label{eq:grosse-cub-lib}
  \ddot u + \omega^2 u +cu^2 +\frac{d u^3}{\epsilon}=0.
\end{equation}
We look for a {\it small} solution with a  double scale for time; we set  
\begin{equation}
  T_0=\omega t, \quad T_1=\epsilon t,  
\end{equation}
so with $D_0u=\frac{\partial u}{\partial T_0}, \quad D_1u=\frac{\partial u}{\partial T_1}$,
we obtain
\begin{equation}
  \frac{du}{dt}=\omega D_0u +\epsilon D_1 u,  \;  \frac{d^2u}{dt^2}=\omega^2D^2_0u +2\epsilon \omega D_0D_1 u +\epsilon^2 D_1^2 u
\end{equation}
and we look for a small solution with initial data 

$u(0)=\epsilon a_0+o(\epsilon)$ and $\dot u(0)=o(\epsilon)$; we use  the {\it ansatz}
\begin{equation}
  \label{eq:dev-doublech}
  u=\epsilon u_1(T_0,T_1) +\epsilon^2 r(T_0,T_1,\epsilon);
\end{equation}
so we have:
\begin{equation}
  \frac{du}{dt}=\epsilon [ \omega D_0u_1 +\epsilon D_1 u_1 ]+ \epsilon^2 [\omega D_0 r + \epsilon D_1 r ]
\end{equation}
and
\begin{equation}
\label{eq:d2usdt2free}
 \frac{d^2u}{dt^2}=\epsilon \omega^2D^2_0u_1  +\epsilon^2 [2  \omega D_0D_1 u_1 + \omega^2 D_0^2r ] +\epsilon^3 [D_1^2 u_1 +  \mathcal{D}_2 r] 
\end{equation}
with
\begin{equation}
  \mathcal{D}_2 r = \frac{1}{\epsilon}\left( \frac{d^2 r}{d t^2} -\omega^2 D_0^2 r \right)= 2 \omega D_0D_1 r +\epsilon D_1^2 r
\end{equation}
We plug expansions \eqref{eq:dev-doublech},\eqref{eq:d2usdt2free} into \eqref{eq:grosse-cub-lib};  by identifying the powers of
 $\epsilon$ in the expansion of  equation \eqref{eq:grosse-cub-lib}, we obtain:
\begin{align}
\Bigg \{
  \begin{array}[h]{rl}
  &\omega^2(D_0^2 u_1 + u_1)=0 \\
 & (D_0^2 r + r)=\frac{S_2}{\omega^2} \quad \text{ with } \label{eq:D02r=S2}\\
  \end{array} 
\end{align}
\begin{equation}
S_2= -\frac{1}{\epsilon^2} \left[   c (\epsilon u_1+\epsilon^2 r)^2
  +\frac{d}{\epsilon}(\epsilon u_1+ \epsilon^2 r)^3 \right] -2 \omega D_0D_1
u_1 -\epsilon {\cal R}(u_1,r,\epsilon)
\end{equation}
where
\begin{equation}
  {\cal R}=D_1^2 u_1 +{\cal D}_2r;
\end{equation}
we can manipulate to obtain:
\begin{equation}
S_2= -\left[   c u_1^2 +d u_1^3 +2 \omega D_0D_1 u_1 + \epsilon R(u_1,r,\epsilon) \right]
 \label{eq:S2=}
\end{equation}
where
\begin{align}
 R(u_1,r,\epsilon)=\left [  {\cal R}+2cu_1r+3du_1^2r+\epsilon\rho(u_1,r,\epsilon) \right]
\end{align}
with a polynomial  $\rho(u_1,r,\epsilon)=c r^2 +3du_1r^2+\epsilon d r^3$ .

We set $\theta(T_0,T_1)= T_0+\beta(T_1)$ noticing  $D_0
\theta=1, \; D_1 \theta=D_1 \beta$; we solve  equation
\eqref{eq:D02r=S2} with:
\begin{equation}
  \label{eq:u1=}
  u_1=a(T_1) \cos(\theta)
\end{equation}
and we obtain
\begin{multline}
  S_2=  \frac{-c a^2}{2}(1+ \cos(2 \theta))-\frac{d a^3}{4}\left( \cos(3\theta)+3 \cos(\theta) \right ) + \\2\omega(D_1 a \sin(\theta)+ aD_1 \beta \cos(\theta)) - \epsilon  R(u_1,r,\epsilon);
\end{multline}
we gather terms at angular frequency $1$: 
\begin{multline}
  S_2=-\frac{d a^3}{4}3 \cos(\theta)  +2\omega \left [D_1 a \sin(\theta)+ aD_1 \beta \cos(\theta) \right]+S_2^{\sharp} -  \epsilon  R(u_1,r,\epsilon)
\end{multline}
where
\begin{equation}
  S_2^{\sharp}= \frac{-c a^2}{2}(1+ \cos(2 \theta))-\frac{d a^3}{4} \cos(3\theta).
\end{equation}
By imposing
\begin{multline}
  \label{eq:Da,Dbeta-libre}
  D_1a=0 \text{ and } \quad  2\omega aD_1 \beta=3\frac{d a^3}{4},
  \text{ so that } \\
a=a_0,  \quad \beta=\beta_0 T_1 \text{ with } \beta_0= 3\frac{d a^2}{8 \omega} T_1,
\end{multline}
we get that 
$S_2=S_2^{\sharp}-\epsilon R(u_1,r,\epsilon) $ no longer contains   any term  at  frequency $1$.

In order to show that  $r$ is bounded, after eliminating  terms at
angular frequency $1$, we go back to the $t$ variable in the second
 equation \eqref{eq:D02r=S2}.
\begin{align}
&  \ddot{r}+\omega^2r=\frac{\tilde{S_2}}{\omega^2} \quad \text{  with
} \label{eq:ddotr=free1ddl}\\
&\tilde{S_2}=S_2^{\sharp}(t,\epsilon) -  \epsilon  \tilde R(u_1,r,\epsilon) \text{ where } \\
&S_2^{\sharp}(t,\epsilon)=\frac{-c a^2}{2} \left [1+ \cos(2 (\omega t+\beta(\epsilon t)) ) \right] -\frac{d a^3}{4} \cos(3(\omega t+\beta(\epsilon t)))  \\
& \quad \quad =\frac{-c a^2}{2}(1+ \cos(2 (\omega t+\beta_0 \epsilon
t))-\frac{d a^3}{4}\left( \cos(3(\omega t+\beta_0 \epsilon t)) \right
) \\ &\text{ with } 
\tilde R(u_1,r,\epsilon)= R(u_1,r,\epsilon)- \mathcal{D}_2 r
\end{align}
in which the remainder $\tilde R$ is expressed  with  variable $t$.

\begin{proposition}
 There exists $\gamma>0$ such that for all $t \le t_{\epsilon}=\frac{\gamma}{\epsilon}$, the solution of  \eqref{eq:grosse-cub-lib}, with $u(0)=\epsilon a_0+o(\epsilon), ~\dot u(0)=o(\epsilon)$, satisfies the  following expansion 
 $$u(t)=\epsilon a_0~ \cos( \nu_{\epsilon} t) ~+~ \epsilon^2 r(\epsilon,t)$$
where 
\begin{equation}
  \label{eq:nualpha}
  \nu_{\epsilon}=\omega +3\epsilon\frac{d a^2}{8 {\omega}}
\end{equation}
and   $r$ is uniformly bounded in  $C^{2}(0,t_{\epsilon})$ .
\end{proposition}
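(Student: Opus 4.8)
The plan is to study the remainder $r$ defined by $u=\epsilon u_1+\epsilon^2 r$ with $u_1=a_0\cos(\nu_\epsilon t)$, and to show that $r$ together with $\dot r$ stays uniformly bounded on the long interval $[0,t_\epsilon]$, $t_\epsilon=\gamma/\epsilon$. The term $\epsilon u_1=\epsilon a_0\cos(\nu_\epsilon t)$ is exactly the leading term claimed in the statement, since $\nu_\epsilon t=\omega t+\beta_0\epsilon t=T_0+\beta(T_1)$. First I would invoke the Cauchy--Lipschitz theorem: the right-hand side of \eqref{eq:grosse-cub-lib} is a polynomial, hence smooth, so for each $\epsilon$ there is a unique maximal $C^2$ (indeed $C^\infty$) solution; equivalently $r=(u-\epsilon u_1)/\epsilon^2$ is a well-defined $C^2$ function as long as $u$ exists, with initial values $r(0),\dot r(0)$ inherited from $u(0)=\epsilon a_0+o(\epsilon)$, $\dot u(0)=o(\epsilon)$ (read so that these produce bounded data for $r$). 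The issue is therefore purely quantitative: showing that these bounds do not deteriorate before time $t_\epsilon$.

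Next I would record the exact equation satisfied by $r$, which is the content of \eqref{eq:ddotr=free1ddl}: substituting $u=\epsilon u_1+\epsilon^2 r$ into \eqref{eq:grosse-cub-lib} and using the choice \eqref{eq:Da,Dbeta-libre} of $a_0$ and $\beta_0$ yields a forced harmonic oscillator
\begin{equation*}
\ddot r+\omega^2 r = S_2^{\sharp}(t,\epsilon)-\epsilon\,\tilde R(u_1,r,\epsilon),
\end{equation*}
where $S_2^{\sharp}$ carries only the harmonics $\cos(2\theta)$, $\cos(3\theta)$ and a constant, and $\tilde R$ is a polynomial of degree three in $r$ whose coefficients are bounded (they involve only $u_1$, its derivatives, and powers of $\epsilon$). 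The decisive structural fact, and the very reason the construction \eqref{eq:Da,Dbeta-libre} was imposed, is that $S_2^{\sharp}$ contains \emph{no} component at the resonant frequency $\omega$: its frequencies $2\nu_\epsilon,3\nu_\epsilon$ stay bounded away from $\omega$ for small $\epsilon$, so the associated particular solution is bounded uniformly in $t$ and $\epsilon$.

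I would then pass to the first-order system for $(r,\dot r)$ and use the variation-of-constants (Duhamel) formula. The homogeneous flow is a rotation in the energy norm $E=\tfrac12(\dot r^2+\omega^2 r^2)$, hence norm preserving; the non-resonant forcing $S_2^{\sharp}$ contributes a uniformly bounded term by the remark above; and the remaining term is $O(\epsilon)$ but depends on $r$. Setting $N(t)=\sup_{0\le s\le t}\|(r,\dot r)(s)\|$ and working inside a fixed ball $N\le M_0$, the nonlinear contribution is bounded by $\epsilon\,C_{M_0}(1+N(s))$, so that $N(t)\le C_0+\epsilon C_{M_0}\int_0^t(1+N(s))\,ds$. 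Gronwall's lemma gives $N(t)\le (C_0+C_{M_0}\gamma)\,e^{C_{M_0}\gamma}$ for $t\le\gamma/\epsilon$, a bound independent of $\epsilon$. Choosing $\gamma$ small enough that this quantity is strictly below $M_0$ closes a continuation (bootstrap) argument: the maximal time $t^\ast=\sup\{t\le t_\epsilon: N(t)\le M_0\}$ must equal $t_\epsilon$. Finally $\ddot r=-\omega^2 r+S_2^{\sharp}-\epsilon\tilde R$ is bounded by the same data, which upgrades the estimate to a uniform $C^2(0,t_\epsilon)$ bound.

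The main obstacle is the feedback of $r$ into its own equation through $\tilde R$ over the long horizon $1/\epsilon$, together with the presence of $O(\epsilon)$ pieces (such as the term proportional to $\beta_0^2 u_1$ hidden in $\tilde R$) that are themselves at the resonant frequency. These pieces would generate secular growth on longer intervals, but on $[0,\gamma/\epsilon]$ the gain $\epsilon$ exactly balances the length $1/\epsilon$, so they produce only $O(1)$ growth; this is precisely the balance that forces the time horizon to be $O(1/\epsilon)$ and that the Gronwall estimate quantifies. The delicate point is to make $\gamma$ small enough that the cubic nonlinearity cannot drive $N$ out of the ball $M_0$ before $t_\epsilon$, that is, to close the bootstrap; everything else (local existence, regularity, and boundedness of the non-resonant response) is routine.
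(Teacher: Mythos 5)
Your proposal is correct and follows essentially the same route as the paper: the paper's proof of this proposition simply invokes the lemma in the appendix, whose proof is precisely your argument --- variation of constants (Duhamel), uniform boundedness of the response to the non-resonant forcing $S_2^{\sharp}$, and a Gronwall-plus-continuation (bootstrap) estimate that closes on intervals of length $\gamma/\epsilon$, with the $C^2$ bound recovered from the equation itself. The only cosmetic difference is that the paper first splits $r=w_1+w_2$ (bounded solution of the linear non-resonant problem, plus a remainder driven only by the $O(\epsilon)$ Lipschitz term) before applying Gronwall, whereas you keep the bounded forcing inside the Duhamel integral.
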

\begin{proof}
Let us use   lemma \ref{eq:lemmew } with equation \eqref{eq:ddotr=free1ddl}; set $S=S_2^{\sharp}$;  as we
have enforced  \eqref{eq:Da,Dbeta-libre}, it is a periodic bounded
function  orthogonal to $e^{\pm it}$, it satisfies  lemma hypothesis;
similarly set  $g=\tilde R$; it is a polynomial in variable $r$ with
coefficients which are bounded functions, so it is a lipschitzian
function on
bounded subsets and satisfies lemma  hypothesis.
\end{proof}

\subsection{Forced vibration,  double scale expansion of order 1}

\subsubsection{Derivation of  the expansion}
Here we consider a similar system with a sinusoidal  forcing at a frequency close
to the free frequency (so called primary resonance); in the linear case, without damping, it is well
known that the solution is no longer bounded when the forcing
frequency goes to the free frequency. Here, we consider the
mechanical system of previous section but with periodic  forcing and we
include some light damping term; the scaling of the forcing term
is chosen so that the expansion works properly; this is a known difficulty, for
example see \cite{nayfeh86}.

\begin{equation}
  \label{eq:grosse-cub-forc}
  \ddot u + \omega^2 u +\epsilon \lambda \dot u +cu^2 +\frac{d u^3}{\epsilon}=\epsilon^2 F \cos(\tilde \omega_{\epsilon} t).
\end{equation}
We assume positive damping, $\lambda >0$ and  excitation  frequency $\tilde \omega_{\epsilon}$ is {\it
  close} to an eigenfrequency of the  linear system in the following way:
\begin{equation}
\label{eq:tiledeomeag=}
\tilde \omega_{\epsilon}= \omega +\epsilon \sigma.
\end{equation}
We look for a small solution with a double scale expansion; to
simplify the computations, the fast scale  $T_0$ is chosen $\epsilon$
dependent  and we set:

\begin{equation}
  \label{eq:T0T1D0D1}
  T_0=\tilde \omega_{\epsilon} t, \quad T_1=\epsilon t \text{ and } \quad
  D_0u=\frac{\partial u}{\partial T_0}, ~ D_1u=\frac{\partial u}{\partial T_1};
\end{equation}
so
\begin{equation}
\label{eq:dudt,d2udt2}
  \frac{du}{dt}=\tilde\omega D_0u +\epsilon D_1 u \; \text{ and } \;  \frac{d^2u}{dt^2}=\tilde\omega_{\epsilon}^2D^2_0u +2\epsilon \tilde\omega_{\epsilon} D_0D_1 u +\epsilon^2 D_1^2 u;
\end{equation}
equation \eqref{eq:tiledeomeag=} provides
\begin{equation}
  \label{eq:tiledeomeag2=}
\tilde \omega_{\epsilon}^2= \omega^2 +2\epsilon \omega\sigma+\epsilon^2\sigma^2.
\end{equation}
With \eqref{eq:tiledeomeag=}, \eqref{eq:T0T1D0D1},
\eqref{eq:dudt,d2udt2}, \eqref{eq:tiledeomeag2=} and the  {\it ansatz}
\begin{equation}
  \label{eq:dev-doublech-forc}
  u=\epsilon u_1(T_0,T_1) +\epsilon^2 r(T_0,T_1,\epsilon),
\end{equation}

we obtain:
\begin{gather}
  \frac{du}{dt}=\epsilon \frac{du_1}{dt} + \epsilon^2 \frac{dr}{dt}= \epsilon \frac{du_1}{dt} +  \epsilon^2 \omega D_0 r +\epsilon^2 (\frac{dr}{dt} -\omega D_0 r)=\\
 \epsilon[ \tilde \omega D_0u_1 +\epsilon D_1 u_1] +  \epsilon^2 \omega D_0 r +\epsilon^2 (\frac{dr}{dt} -\omega D_0 r)=\\
 \epsilon[  \omega D_0u_1+ \epsilon \sigma D_0 u_1 +\epsilon D_1 u_1] +  \epsilon^2 \omega D_0 r +\epsilon^2 (\frac{dr}{dt} -\omega D_0 r)
\end{gather}
where we  remark that $\frac{dr}{dt} -\omega D_0 r=\epsilon \sigma D_0 r
+\epsilon D_1 r $ is of degree 1 in $\epsilon$. For the  second derivative, as for the case without forcing, we introduce
\begin{align}
  {\cal D}_2r&= \frac{1}{\epsilon}\left
    (\frac{d^2r}{dt^2}-\omega^2D_0^2r \right) \text{ with the expansion}\\
  {\cal D}_2r&=2 \omega [ \sigma D_0^2 r +  D_0D_1 r ]+ \epsilon\left[ \sigma^2 D_0^2r +2 \sigma D_0D_1r +D_1^2r \right];
\end{align}
\begin{align}
  \frac{d^2u}{dt^2}&=\epsilon\frac{d^2u_1}{dt^2}+\epsilon^2\frac{d^2r}{dt^2}=
\epsilon\frac{d^2u_1}{dt^2}+\epsilon^2 \omega^2D_0^2r +\epsilon^3 {\cal D}_2 r\\
& =\epsilon \left [ \tilde\omega^2D^2_0u_1 +2\epsilon \tilde\omega D_0D_1 u_1 +\epsilon^2  D_1^2 u_1 \right ] \\ 
&\qquad \qquad \qquad +\epsilon^2 \omega^2D_0^2r +\epsilon^3 {\cal D}_2 r\\
& =\epsilon \big \{ \omega^2D^2_0u_1+ 2\epsilon \omega \left(   \sigma D_0^2u_1 +   D_0D_1 u_1 \right)+\\&
\qquad\qquad\qquad \epsilon^2 \left[ \sigma^2D_0^2u_1+2  \sigma D_0D_1 u_1 + D_1^2 u_1\right ] \big \} \\ 
&\qquad\qquad\qquad\qquad\qquad+\epsilon^2 \omega^2D_0^2r +\epsilon^3 {\cal D}_2 r
\end{align}

 the last term in the right hand side  will be part of the remainder  $R$ of equation \eqref{eq:D0^2r+r=S2/w2}.
We plug previous expansions into \eqref{eq:grosse-cub-forc};
we obtain:
\begin{gather}
\label{eq:D02u1+u1=0f}
\Bigg \{  
\begin{array}[]{rl}
&\omega^2 (D_0^2 u_1 + u_1)=0 \\
&  D_0^2 r + r=\frac{S_2}{\omega^2} \quad \text{ with}
\end{array}
\end{gather}

\begin{gather}
 \label{eq:D0^2r+r=S2/w2}
S_2= -\left \{ c u_1^2 +d u_1^3 +2  \omega [D_0D_1 u_1 +\sigma D_0^2u_1]
+\lambda \omega D_0 u_1 \right \} \\
+F \cos(T_0)- \epsilon R(u_1,r,\epsilon) 
\end{gather}
 \text{and with}
\begin{gather}
R(u_1,r,\epsilon)= D_1^2 u_1+ 2cu_1 r +3 d u_1^2 r+ 
\sigma^2 D_0^2u_1 +2 \sigma D_0D_1u_1 +\\
\lambda(\omega D_0r +\sigma D_0u_1 +D_1u_1) 
+{\cal D}_2 r\\
+\lambda(\frac{dr}{dt}-\omega D_0r) +\epsilon \rho(u_1,r,\epsilon). 
\end{gather}
Set $\theta(T_0,T_1)= T_0+\beta(T_1)$. We solve the first equation of \eqref{eq:D02u1+u1=0f}   :
\begin{equation}
  \label{eq:u1forc=}
  u_1=a(T_1) \cos(\theta);
\end{equation}
then we use $ T_0  =\theta(T_0,T_1) -\beta(T_1)$
and we obtain

\begin{multline}
  S_2=   \frac{-c a^2}{2}(1+ \cos(2 \theta))-\frac{d a^3}{4}\left( \cos(3\theta)+3 \cos(\theta) \right ) + 
\\2\omega(D_1 a \sin(\theta)+ aD_1 \beta \cos(\theta)) +2 \sigma \omega a \cos(\theta) +  a \lambda \omega \sin(\theta)  \\
+F \sin(\theta) \sin( \beta(T_1)) +F \cos(\theta) \cos(\beta(T_1)) -
\epsilon  R(u_1,r,\epsilon) 
\end{multline}
or
\begin{multline}
  S_2= \left[ 2\omega D_1 a +\lambda a \omega +F\sin(\beta)   \right ]\sin(\theta) \\
+\left [ 2\omega a D_1 \beta +2\sigma\omega a-\frac{3da^3}{4} + F \cos (\beta) \right] \cos(\theta) \\
+  S_2^{\sharp} ~  \quad
-\epsilon  R(u_1,r,\epsilon)
\end{multline}
with
\begin{equation}
  S_2^{\sharp}=   \frac{-c a^2}{2}(1+ \cos(2 \theta))-\frac{d a^3}{4}\left( \cos(3\theta) \right );
\end{equation}

note that  $S_2^{\sharp}$ is a periodic function with frequency
strictly multiple of $1$.
\paragraph{Orientation.}
By enforcing 
\begin{align}
\label{eq:D1aD1beta1} 
\Bigg \{
\begin{array}[h]{rl}
&  2 \omega D_1 a +\lambda a \omega =-F \sin(\beta ) \\
& 2 a \omega D_1 \beta +2\sigma\omega a -\frac{3d a^3}{4} =-F\cos(\beta) 
\end{array}
\end{align}

$S_2 =S_2^{\sharp} - \epsilon  R(u_1,r,\epsilon)$ contains
neither term at frequency $1$ nor at a frequency which goes to $1$; this
point will enable to justify this expansion under some  conditions;
before, we study
 stationary solution of this  system and the  stability of the dynamic
 solution in a neighborhood of the  stationary solution.

\subsubsection{Stationary solution and  stability}

Let us consider the  stationary solution of  \eqref{eq:D1aD1beta1},
it satisfies:
\begin{align}
\label{eq:D1aD1beta1stat}
\Bigg \{
  \begin{array}[h]{rl}
\lambda a \omega &+F \sin(\beta) =0\\
 \left (2\omega\sigma -\frac{3d a^2}{4} \right) &+\frac{F\cos(\beta)}{a}=0.
  \end{array}
\end{align}
Now, we study the stability of the solution of \eqref{eq:D1aD1beta1},
in a neighborhood of this  stationary solution noted $(\bar a, \bar
\beta)$; set $a=\bar a + \tilde a, \beta=\bar \beta +\tilde \beta$,
the linearized  system is written 
$$\binom{D_1 \tilde a}{ D_1 \tilde \beta}=J\binom{\tilde a}{\tilde \beta};$$
manipulating, we obtain the jacobian matrix. 
\begin{gather}
J=  \begin{pmatrix}
    -\frac{\lambda}{2 } & -\frac{F}{2 \omega} \cos(\bar \beta)\\
\frac{9d \bar a}{8 \omega} -\frac{\sigma}{\bar a}& \frac{F}{2 \omega \bar a} \sin(\bar \beta)
  \end{pmatrix}
 = \begin{pmatrix}
     -\frac{\lambda}{2 } & a(\sigma -\frac{3d \bar a^2}{8 \omega})\\
\frac{9d \bar a}{8 \omega} -\frac{\sigma}{\bar a}&  -\frac{\lambda}{2 }
  \end{pmatrix}.
\end{gather}
The matrix   trace is $-\lambda$, and the  determinant is 
$$ det(J)=\frac{\lambda^2}{4}-(\frac{9 d \bar a^2}{8 \omega}-\sigma)(\sigma-\frac{3 d \bar a^2}{8 \omega};)$$
we notice that the determinant is strictly positive for $\sigma=0$ so
by continuity, it remains positive for $\sigma$ small; moreover $
\frac{d}{d\sigma}det(J)<0$ for $\sigma<0$ so $det(J)>0$ for $\sigma<0$; by studying the trinomial in $\sigma$, we notice that 
 the determinant is positive when this semi-implicit inequality is satisfied:
 $\sigma \le \frac{3 d \bar a^2}{4 \omega}
 -\frac{1}{2}\sqrt{\frac{9d^2\bar a^4}{16 \omega^2}-\lambda^2}$; 
so in these conditions, the two eigenvalues are negative;
then the  solution of the  linearized system goes to zero; with the
theorem of Poincar\'e-Lyapunov (look in the appendix for  the theorem
\ref{th:poinc-lyapu},)
when the initial data is close enough to the 
  stationary solution, the solution of the system \eqref{eq:D1aD1beta1},
goes to the stationary solution.
We expand this point, set

\begin{equation}
    y=\binom{a}{\beta}, \quad G(y)= \left( \begin{array}[h]{rl}
-\lambda a \omega &-F \sin(\beta) \\
- \left (2\omega\sigma -\frac{3d a^2}{4} \right) &-\frac{F\cos(\beta)}{a}
  \end{array}
\right);
\end{equation}
the system \eqref{eq:D1aD1beta1stat} may be written $\dot y=G(y)$; denote
 $\bar y= \binom{\bar a}{\bar \beta}$ the solution of
 \eqref{eq:D1aD1beta1stat}; perform the  change of variable $y=\bar y
 +x$, we have $G(\bar y +x)=G(\bar y) +Jx +g(x)
$ with $ g( x) =o(\| x\|$; the theorem \ref{th:poinc-lyapu} may be
applied with $A=J, \; B=0 $, here the function $g$ does not depends  on time.

\begin{proposition}
\label{prop:stab1ddl}
  If  $\sigma \le \frac{3 d \bar a^2}{4 \omega} -\frac{1}{2}\sqrt{\frac{9d^2\bar a^4}{16 \omega^2}-\lambda^2}$
, the stationary solution of  \eqref{eq:D1aD1beta1}
is  stable in the sense of Lyapunov (if the dynamic solution starts
 close to the
stationary solution of \eqref{eq:D1aD1beta1stat}, it remains close to it and converges to it ); to the  stationary case corresponds the approximate solution of
\eqref{eq:grosse-cub-forc} $u_1=\bar a \cos(T_0+ \bar \beta)$, it is
periodic; for an initial data  close enough  to  this stationary solution, $u_1= a(T_1) \cos(T_0+  \beta(T_1))$ with $a,\beta$ solutions of  \eqref{eq:D1aD1beta1}
; it goes to the solution \eqref{eq:D1aD1beta1stat} $\bar a, \bar \beta$ when $T_1 \longrightarrow +\infty$.
\end{proposition}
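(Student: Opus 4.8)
The plan is to reduce the nonlinear slow-flow system \eqref{eq:D1aD1beta1} to its linearization at the stationary point $(\bar a,\bar\beta)$ and to invoke the Poincar\'e--Lyapunov theorem \ref{th:poinc-lyapu}, exactly as set up in the paragraph preceding the statement. Writing $y=(a,\beta)^{\top}$ and $G$ for the vector field whose zeros are the solutions of \eqref{eq:D1aD1beta1stat}, the first step is to record that $G$ is $C^1$ in a neighbourhood of $\bar y=(\bar a,\bar\beta)^{\top}$; this requires $\bar a\neq 0$ so that the terms involving $1/a$ are differentiable there, which is automatic since the stationary relations \eqref{eq:D1aD1beta1stat} force $\bar a>0$ whenever $F\neq 0$. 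This regularity lets me write $G(\bar y+x)=G(\bar y)+Jx+g(x)=Jx+g(x)$ with $g(x)=o(\|x\|)$ as $x\to 0$, which is precisely the hypothesis of \ref{th:poinc-lyapu} with $A=J$, $B=0$ and a time-independent remainder.

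Next I would verify the spectral condition, namely that both eigenvalues of $J$ lie in the open left half-plane. Since the trace of $J$ equals $-\lambda<0$, it suffices to establish $\det(J)>0$ under the assumed bound on $\sigma$. I would treat $\det(J)=\frac{\lambda^2}{4}-\left(\frac{9d\bar a^2}{8\omega}-\sigma\right)\left(\sigma-\frac{3d\bar a^2}{8\omega}\right)$ as an upward quadratic trinomial in $\sigma$: expanding shows the coefficient of $\sigma^2$ is $+1$, the value at $\sigma=0$ is strictly positive, and the two real roots (when they exist) are symmetric about $\frac{3d\bar a^2}{4\omega}$ with half-gap $\frac12\sqrt{\frac{9d^2\bar a^4}{16\omega^2}-\lambda^2}$. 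Thus the stated inequality $\sigma\le \frac{3d\bar a^2}{4\omega}-\frac12\sqrt{\frac{9d^2\bar a^4}{16\omega^2}-\lambda^2}$ is exactly the condition that $\sigma$ lie at or below the smaller root, which for an upward parabola gives $\det(J)\ge 0$; combined with the negative trace this forces both eigenvalues of $J$ to have strictly negative real part.

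With the spectral hypothesis in hand, Poincar\'e--Lyapunov then yields local asymptotic stability of the equilibrium: for initial data close enough to $\bar y$, the solution of \eqref{eq:D1aD1beta1} stays near $\bar y$ and converges to it as $T_1\to+\infty$, which in particular gives Lyapunov stability in the stated sense. The last step is to translate back to the mechanical variable: the equilibrium $(\bar a,\bar\beta)$ corresponds to the periodic approximate solution $u_1=\bar a\cos(T_0+\bar\beta)$ of \eqref{eq:grosse-cub-forc}, and the convergence $(a(T_1),\beta(T_1))\to(\bar a,\bar\beta)$ transfers directly to show that a nearby dynamic solution $u_1=a(T_1)\cos(T_0+\beta(T_1))$ approaches this periodic solution as $T_1\to+\infty$.

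The main obstacle is the determinant-sign analysis, since the bound on $\sigma$ is genuinely semi-implicit: the square root is real only in the regime $\frac{9d^2\bar a^4}{16\omega^2}\ge\lambda^2$, and I must be careful to identify which branch of the trinomial is in play. When the discriminant is negative the parabola has no real roots and $\det(J)>0$ for every $\sigma$, so the stated hypothesis is the binding case and is needed only there; I would flag this regime distinction explicitly rather than let it hide behind the square-root notation. The remaining verifications---that $g(x)=o(\|x\|)$ and that the eigenvalue bound combines correctly with the trace---are routine once $\det(J)>0$ is secured.
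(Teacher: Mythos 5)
Your proposal is correct and takes essentially the same route as the paper: linearization of the slow-flow system at $(\bar a,\bar\beta)$, the trace $=-\lambda$ plus the sign analysis of $\det(J)$ viewed as an upward trinomial in $\sigma$ whose smaller root is $\frac{3d\bar a^2}{4\omega}-\frac{1}{2}\sqrt{\frac{9d^2\bar a^4}{16\omega^2}-\lambda^2}$, and then the Poincar\'e--Lyapunov theorem \ref{th:poinc-lyapu} applied with $A=J$, $B=0$ and a time-independent remainder $g(x)=o(\|x\|)$. Your added precautions (requiring $\bar a\neq 0$ for $C^1$ regularity, and noting that when the discriminant is negative the determinant is positive for all $\sigma$) only sharpen the paper's sketch; the one residual imprecision --- that at equality in the $\sigma$-bound the determinant vanishes, a zero eigenvalue appears, and the theorem no longer applies, so a strict inequality is really needed --- is shared with the paper itself.
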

With this result of stability, we can  state precisely the approximation of the solution of \eqref{eq:grosse-cub-forc} by the function $u_1$.

\subsubsection{Convergence of the expansion}
\begin{proposition}
\label{prop:conv-dev-forc}
  Consider the solution of \eqref{eq:grosse-cub-forc} with $$u(0)=\epsilon  a_0+o(\epsilon), ~\dot u(0)=-\epsilon \omega a_0 \sin(\beta_0)+o(\epsilon),$$ with $a_0, \beta_0$ close of the stationary solution $(\bar a, \bar \beta)$, 
$$|a_0-\bar a|\le \epsilon C_1,|\beta_0 -\bar \beta| \le \epsilon C_2;$$
 When  $\sigma \le \frac{3 d \bar a^2}{4 \omega} -\frac{1}{2}\sqrt{\frac{3d\bar a^2}{2 \omega}-\lambda^2}$,
 there exists $\gamma>0$ such that for all $t \leq t_{\epsilon}=\frac{\gamma}{\epsilon}$,
 the following expansion is satisfied
 $$u(t)=\epsilon a(\epsilon t)~ \cos(\tilde \omega_{\epsilon} t+\beta(\epsilon t)) ~+~ \epsilon^2 r(\epsilon,t)$$ with 
  $\omega_{\epsilon}=\omega+\epsilon \sigma$ and $r$ uniformly bounded  in $C^{2}(0,t_{\epsilon})$ and   with $a, ~\beta$ solution of  \eqref{eq:D1aD1beta1}.
\end{proposition}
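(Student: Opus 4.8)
The plan is to mirror the structure of the free-vibration proof (Proposition 2.1) but now with the forcing and damping terms carried through, reducing the convergence statement to an application of the remainder lemma \ref{eq:lemmew} once the resonant terms at frequency $1$ have been removed. First I would recall that the choice \eqref{eq:D1aD1beta1} was made precisely so that $S_2 = S_2^{\sharp} - \epsilon R(u_1,r,\epsilon)$ contains no term at frequency $1$ (nor at a frequency tending to $1$), exactly as \eqref{eq:Da,Dbeta-libre} achieved in the free case. Returning to the $t$ variable, the equation for $r$ then reads $\ddot r + \omega^2 r = \tilde S_2 / \omega^2$ with $\tilde S_2 = S_2^{\sharp}(t,\epsilon) - \epsilon \tilde R(u_1,r,\epsilon)$, where $S_2^{\sharp}$ is a bounded periodic function orthogonal to $e^{\pm it}$ and $\tilde R$ is polynomial in $r$ with bounded coefficients, hence Lipschitz on bounded sets. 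These are exactly the hypotheses of lemma \ref{eq:lemmew}, which then yields that $r$ is uniformly bounded in $C^2(0,t_\epsilon)$ on a time interval of length $\gamma/\epsilon$.

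The new ingredient compared to Proposition 2.1 is that the modulation equations \eqref{eq:D1aD1beta1} for $(a,\beta)$ are genuinely nonautonomous-looking but in fact autonomous in the slow time $T_1$, and their solution need not be constant; one must guarantee that $a(T_1)$ and $\beta(T_1)$ stay bounded (and $a$ bounded away from $0$, so the terms $\sigma/a$ and $F\cos\beta/a$ stay controlled) on the whole slow interval $0 \le T_1 \le \gamma$. This is where Proposition \ref{prop:stab1ddl} enters: under the stated condition on $\sigma$, the stationary point $(\bar a,\bar\beta)$ is Lyapunov stable, so initial data $(a_0,\beta_0)$ with $|a_0-\bar a|\le \epsilon C_1$, $|\beta_0-\bar\beta|\le\epsilon C_2$ produce a trajectory of \eqref{eq:D1aD1beta1} that remains in a fixed neighbourhood of $(\bar a,\bar\beta)$ for all $T_1\ge 0$ and converges to it. Hence $a,\beta$ and their $D_1$-derivatives are uniformly bounded, which is precisely what is needed for the coefficients appearing in $\tilde R$ and in $S_2^{\sharp}$ to be uniformly bounded functions of $t$, validating the Lipschitz hypothesis of the lemma uniformly in $\epsilon$.

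I would then assemble the argument as follows: fix the neighbourhood furnished by Proposition \ref{prop:stab1ddl} and the resulting uniform bounds on $a,\beta$; verify that $S = S_2^{\sharp}$ satisfies the orthogonality and boundedness hypotheses of lemma \ref{eq:lemmew} (periodicity follows from the $T_0$-dependence through $\cos 2\theta, \cos 3\theta$, and orthogonality to $e^{\pm it}$ from having killed the frequency-$1$ content); verify that $g = \tilde R$ is Lipschitz on bounded subsets with the Lipschitz constant uniform over the compact parameter range; and invoke the lemma to get the existence of $\gamma>0$ and the uniform $C^2$ bound on $r$. The initial conditions translate correctly because the ansatz \eqref{eq:dev-doublech-forc} with $u_1 = a\cos(T_0+\beta)$ gives $u(0)=\epsilon a_0\cos\beta_0 + O(\epsilon^2)$ and $\dot u(0) = -\epsilon\omega a_0\sin\beta_0 + O(\epsilon^2)$, matching the hypothesis after absorbing the phase.

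The main obstacle I expect is not the lemma application itself but the interplay between the two time scales in controlling $\tilde R$: the remainder $\tilde R$ depends on $r$ through the slow-scale operator $\mathcal D_2 r$, which contains $\sigma^2 D_0^2 r$, $2\sigma D_0 D_1 r$ and $D_1^2 r$, so one must check that feeding the bound on $r$ back into $\tilde R$ does not destroy the Lipschitz estimate — i.e. that the fixed-point / Gronwall mechanism inside lemma \ref{eq:lemmew} closes with constants independent of $\epsilon$. The delicate point is ensuring that the $\sigma$-dependent correction to the resonance, together with the requirement that $a$ stay away from zero, is compatible with the stability threshold; this is exactly why the stated condition on $\sigma$ in this proposition is (slightly) different from the one in Proposition \ref{prop:stab1ddl}, and reconciling the two thresholds is the step that requires the most care.
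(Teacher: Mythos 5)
There is a genuine gap, and it sits exactly at the step you treat as routine. You assert that, after returning to the variable $t$, the source term $S_2^{\sharp}$ is ``a bounded periodic function orthogonal to $e^{\pm it}$'' and you feed it directly into lemma \ref{eq:lemmew } as $S$. In the forced case this is false: unlike the free case, where \eqref{eq:Da,Dbeta-libre} makes $a$ constant and $\beta$ linear in $T_1$, here $a(\epsilon t)$ and $\beta(\epsilon t)$ are genuine, non-constant solutions of \eqref{eq:D1aD1beta1}, so
\begin{equation*}
S_2^{\sharp}(t,\epsilon)=\frac{-c\,a^2(\epsilon t)}{2}\bigl(1+\cos(2(\tilde\omega_{\epsilon}t+\beta(\epsilon t)))\bigr)-\frac{d\,a^3(\epsilon t)}{4}\cos\bigl(3(\tilde\omega_{\epsilon}t+\beta(\epsilon t))\bigr)
\end{equation*}
is a slowly modulated, \emph{non-periodic} function of $t$; the paper says this explicitly. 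Periodicity does not ``follow from the $T_0$-dependence through $\cos 2\theta,\cos 3\theta$'', because the amplitude and the phase drift on the slow scale. Consequently hypothesis 1 of lemma \ref{eq:lemmew } is not satisfied by $S_2^{\sharp}$, and the first step of that lemma's proof (uniform $C^2$ boundedness of the solution of $w_1''+w_1=S$, which rests on $S$ being a sum of non-resonant periodic functions) is not available. Your invocation of Proposition \ref{prop:stab1ddl} only to guarantee that $a,\beta$ and the coefficients of $\tilde R$ stay bounded does not repair this: boundedness of $S$ alone is not among the lemma's hypotheses and is not enough to exclude secular growth.

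The missing idea — which is the actual content of the paper's proof — is a comparison-function argument. One introduces the genuinely periodic function $S_2^{\natural}(t,\epsilon)$, defined by the same formula but with the stationary values $\bar a,\bar\beta$ frozen in place of $a(\epsilon t),\beta(\epsilon t)$. The stability result (Proposition \ref{prop:stab1ddl}), together with the hypothesis $|a_0-\bar a|\le\epsilon C_1$, $|\beta_0-\bar\beta|\le\epsilon C_2$, gives $|a(\epsilon t)-\bar a|\le \epsilon C_1$ and $|\beta(\epsilon t)-\bar\beta|\le\epsilon C_2$ for $t\le\gamma/\epsilon$, whence $|S_2^{\sharp}-S_2^{\natural}|\le\epsilon C_3$ on that interval. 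The $O(\epsilon)$ difference is then absorbed into the remainder $\epsilon\tilde R$ (harmlessly, since it does not depend on $r$), and lemma \ref{eq:lemmew } is applied with $S=S_2^{\natural}$, which \emph{is} periodic, bounded and orthogonal to $e^{\pm it}$. So stability is used quantitatively ($\epsilon$-closeness to the stationary orbit, to license the substitution $S_2^{\sharp}\rightsquigarrow S_2^{\natural}$), not merely qualitatively (boundedness), and without this decomposition your application of the lemma is unjustified. Your closing speculation that the discrepancy between the $\sigma$-thresholds of the two propositions is the delicate point is beside the issue; the threshold plays no role beyond triggering Proposition \ref{prop:stab1ddl}.
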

\begin{proof}
Indeed after  eliminating
 terms at frequency $1$, we go back to the variable $t$ for  the second equation \eqref{eq:D02u1+u1=0f}
\begin{align}
&  \ddot{r}+\omega^2r=\frac{\tilde{S_2}}{\omega^2} \text{  with }\\
&\tilde{S_2}=S_2^{\sharp}(t,\epsilon) -  \epsilon  \tilde R(u_1,r,\epsilon)
\end{align}
where
\begin{equation}
  \tilde R(u_1,r,\epsilon)=R(u_1,r,\epsilon) 
-{\cal D}_2r -\lambda(\frac{dr}{dt}-\omega D_0 r)
\end{equation}
with all the terms expressed with the variable $t$;
we have 
\begin{equation}
    S_2^{\sharp}(t,\epsilon)=   \frac{-c a^2(\epsilon t)}{2}(1+ \cos(2
    (\tilde \omega_{\epsilon} t+\beta(\epsilon t)))-\frac{d a^3(\epsilon t)}{4}\left(
      \cos(3(\tilde \omega_{\epsilon} t +\beta(\epsilon t) \right );
\end{equation}
this function is {\it} not periodic but is  {\it close} of the periodic function:
\begin{equation}
    S_2^{\natural}(t,\epsilon)=   \frac{-c \bar a^2}{2}(1+ \cos(2
    (\tilde \omega_{\epsilon} t+ \bar \beta ))-\frac{d a^3}{4}\left(
      \cos(3(\tilde \omega_{\epsilon}
      t +\bar \beta ) \right ) 
\end{equation}

and for $t\le \frac{\gamma}{\epsilon}$ as  the solution of  \eqref{eq:D1aD1beta1} is stable:
it remains close to the stationary solution
\begin{equation}
 |a(\epsilon t) -\bar a| \le \epsilon C_1, \quad  |\beta(\epsilon t)-\bar \beta|\le \epsilon C_2
\end{equation}
and
\begin{equation}
  | S_2^{\sharp}- S_2^{\natural}| \le \epsilon C_3;
\end{equation}
so this difference may be included in the remainder
 $\tilde R$.
We use lemma \ref{eq:lemmew } with $S =S_2^{\natural}$; it  satisfies
lemma  hypothesis; similarly, we use $g=\tilde R$; 
it satisfies the hypothesis because it is a  polynomial  in the
variables $r,u_1, \epsilon$, with coefficients which are bounded
functions, so it is  lipschitzian on  bounded subsets.
\end{proof}

\subsubsection{Maximum of the stationary solution, primary resonance}
Consider the stationary solution of \eqref{eq:D1aD1beta1}, it satisfies

\begin{align}
\Bigg \{
  \begin{array}[h]{rl}
\lambda a \omega &=-F \sin(\beta) \\
a \left (2\omega\sigma -\frac{3d a^2}{4} \right) &=-F\cos(\beta)
\end{array}
\end{align}
 manipulating, we get that $a$ is solution of the equation:
\begin{equation}
  f(a,\sigma)=\lambda^2 a^2 \omega^2 +a^2 \left (2\omega \sigma-\frac{3d a^2}{4} \right)^2-F^2=0.
\end{equation}
We compute
\begin{gather}
  \frac{\partial f}{\partial \sigma}=4 a^2 \omega (2 \omega \sigma -\frac{3d a^2}{4} )\\
\frac{\partial f}{\partial a}= 2a \lambda^2\omega^2 +2 a \left (2\omega \sigma-\frac{3d a^2}{4} \right)^2  - 6\frac{da^3}{4}\left (2\omega \sigma-\frac{3d a^2}{4} \right)
\\
 \frac{\partial^2 f}{\partial \sigma^2}=8a^2 \omega^2\\
\end{gather}
For  $\sigma$ close enough to the solution of $\frac{\partial f}{\partial \sigma}=0$,
$  \frac{\partial f}{\partial \sigma}$ is small, $\frac{\partial
  f}{\partial a}$ is not zero, and with the implicit function theorem this equation
  defines a function  $a(\sigma)$; lets use :
$$\frac{\partial a}{\partial \sigma}=-\frac{\frac{\partial f}{\partial \sigma}}{\frac{\partial f}{\partial a}}
\text{ and } \quad \frac{\partial^2 a}{\partial \sigma^2}=-\frac{\frac{\partial^2 f}{\partial \sigma^2}}{\frac{\partial f}{\partial a} }.$$
In our case, when

  $\frac{\partial a}{\partial \sigma}=0$, we have
\begin{equation}
  \sigma=\frac{3d a^2}{8 \omega}, \quad  \frac{\partial f}{\partial a}=2a \lambda^2 \omega^2, \quad
 \frac{\partial^2 f}{\partial \sigma^2}=8 a^2\omega^4, \;
\end{equation}
so the second derivative $\frac{\partial^2 a}{\partial \sigma^2}<0$
and $a$ is maximum at the frequency of the free periodic solution.

\begin{proposition}
  The stationary solution of \eqref{eq:D1aD1beta1}
satisfies 
\begin{align}
\label{eq:statsol1ddl}
\Bigg \{
  \begin{array}[h]{rl}
  \lambda a \omega &+F \sin(\beta)=0 \\
2a \omega\sigma -\frac{3d a^3}{4} &+F\cos(\beta)=0
\end{array}
\end{align} 
it reaches its maximum amplitude for $\sigma=\frac{3d a^2}{8\omega} $
and $\beta=\frac{\pi}{2}+ k \pi$; the excitation is at the angular frequency 
$$\tilde \omega_{\epsilon}= \omega+3\epsilon \frac{d a^2}{8 \omega} +O(\epsilon^2)
~ \text{ and } ~ F=\lambda \omega a$$
it is the angular frequency $\nu_{\epsilon}$ of  the free periodic solution  \eqref{eq:nualpha}
for this frequency, the approximation (of the  solution up to the order $\epsilon$)  is periodic:
\begin{equation}
  u(t)=\epsilon\frac{F}{\lambda \omega}\sin(\tilde \omega_{\epsilon} t)+\epsilon^2 r(\epsilon,t)
\end{equation}

\end{proposition}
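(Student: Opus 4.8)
The plan is to read off every assertion from the amplitude--frequency relation $f(a,\sigma)=0$ together with the stationary equations, most of the analytic work having already been carried out in the paragraph preceding the statement. First I would recall that the stationary state is obtained by setting $D_1 a = D_1 \beta = 0$ in the modulation system \eqref{eq:D1aD1beta1}, which gives exactly \eqref{eq:statsol1ddl}. Squaring and adding the two stationary equations eliminates $\beta$ and yields the implicit amplitude relation $f(a,\sigma)=\lambda^2 a^2 \omega^2 + a^2(2\omega\sigma - \tfrac{3da^2}{4})^2 - F^2 = 0$, which I treat as defining $a$ as a smooth function of $\sigma$ via the implicit function theorem.

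Next I would locate the maximum of $a(\sigma)$. Using $\partial a/\partial\sigma = -(\partial f/\partial\sigma)/(\partial f/\partial a)$ and the computed $\partial f/\partial\sigma = 4a^2\omega(2\omega\sigma - \tfrac{3da^2}{4})$, the critical point $\partial a/\partial\sigma=0$ forces $2\omega\sigma - \tfrac{3da^2}{4}=0$, i.e. $\sigma = \tfrac{3da^2}{8\omega}$. At that point $\partial f/\partial a = 2a\lambda^2\omega^2 > 0$ and $\partial^2 f/\partial\sigma^2 = 8a^2\omega^4 > 0$, so $\partial^2 a/\partial\sigma^2 = -(\partial^2 f/\partial\sigma^2)/(\partial f/\partial a) < 0$; this confirms a genuine maximum and establishes the primary resonance.

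Then I would substitute $\sigma = \tfrac{3da^2}{8\omega}$ back into \eqref{eq:statsol1ddl}. Since the factor $2\omega\sigma - \tfrac{3da^2}{4}$ vanishes, the second stationary equation collapses to $F\cos\beta = 0$, forcing $\cos\beta = 0$, i.e. $\beta = \tfrac{\pi}{2} + k\pi$; the first equation then reads $\lambda a\omega = -F\sin\beta = \mp F$, and selecting the branch $\sin\beta = -1$ (the only one compatible with positive amplitude and forcing) gives $F = \lambda\omega a$, equivalently $a = F/(\lambda\omega)$. The excitation frequency is $\tilde\omega_\epsilon = \omega + \epsilon\sigma = \omega + 3\epsilon\tfrac{da^2}{8\omega}$, which coincides exactly with the free-vibration frequency $\nu_\epsilon$ of \eqref{eq:nualpha}, the expected correspondence between the peak of the forced response and the free frequency. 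Finally, at the stationary state $a,\beta$ are constants, so the leading term $u_1 = a\cos(\tilde\omega_\epsilon t + \beta)$ with $\beta = -\tfrac{\pi}{2}$ equals $a\sin(\tilde\omega_\epsilon t) = \tfrac{F}{\lambda\omega}\sin(\tilde\omega_\epsilon t)$, which is genuinely periodic, and the displayed expansion then follows from Proposition \ref{prop:conv-dev-forc}.

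The only genuinely delicate point is the legitimacy of the implicit function theorem and the sign of the second derivative: one must verify that $\partial f/\partial a$ does not vanish in a neighborhood of the resonance peak, so that $a(\sigma)$ is well defined and $C^2$ there and the critical point is isolated. Everything else is algebraic bookkeeping once the partial derivatives above are in hand.
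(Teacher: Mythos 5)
Your proof is correct and follows essentially the same route as the paper: eliminating $\beta$ by squaring and adding the stationary equations to get $f(a,\sigma)=0$, invoking the implicit function theorem, locating the critical point via $\frac{\partial f}{\partial \sigma}=0$ (giving $\sigma=\frac{3da^2}{8\omega}$), and concluding from $\frac{\partial^2 a}{\partial \sigma^2}=-\frac{\partial^2 f/\partial\sigma^2}{\partial f/\partial a}<0$ that the amplitude is maximal there. Your explicit back-substitution producing $\cos\beta=0$, $F=\lambda\omega a$, the identification $\tilde\omega_{\epsilon}=\nu_{\epsilon}$, and the periodic leading term $\frac{F}{\lambda\omega}\sin(\tilde\omega_{\epsilon}t)$ only spells out steps the paper leaves implicit between its derivation and the proposition statement.
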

\begin{rem}
  We remark that this value of $\sigma=\frac{3d a^2}{8\omega} $ is
  indeed smaller than  the maximal value that  $\sigma$ may reach 
  in order that the previous expansion  converges as
indicated in  proposition \ref{prop:conv-dev-forc}.
\end{rem}
\begin{rem}
  We note also that when the stationary solution reaches its maximum
  amplitude we have $ F=\lambda \omega a$ and so we can recover the
  damping ratio $\lambda$ from such a forced vibration experiment;
  this is a close link with the linear case (see for example
  \cite{geradin-rixen} or the English translation
  \cite{geradin-rixen-eng}). This is quite interesting in practice as
  the damping ratio is usually difficult to measure; we have here a
  kind of stability result for this experiment.
\end{rem}

\subsubsection{Computation of stationary solution}
\begin{figure}[h] 
\includegraphics[height=7cm, width=10cm]{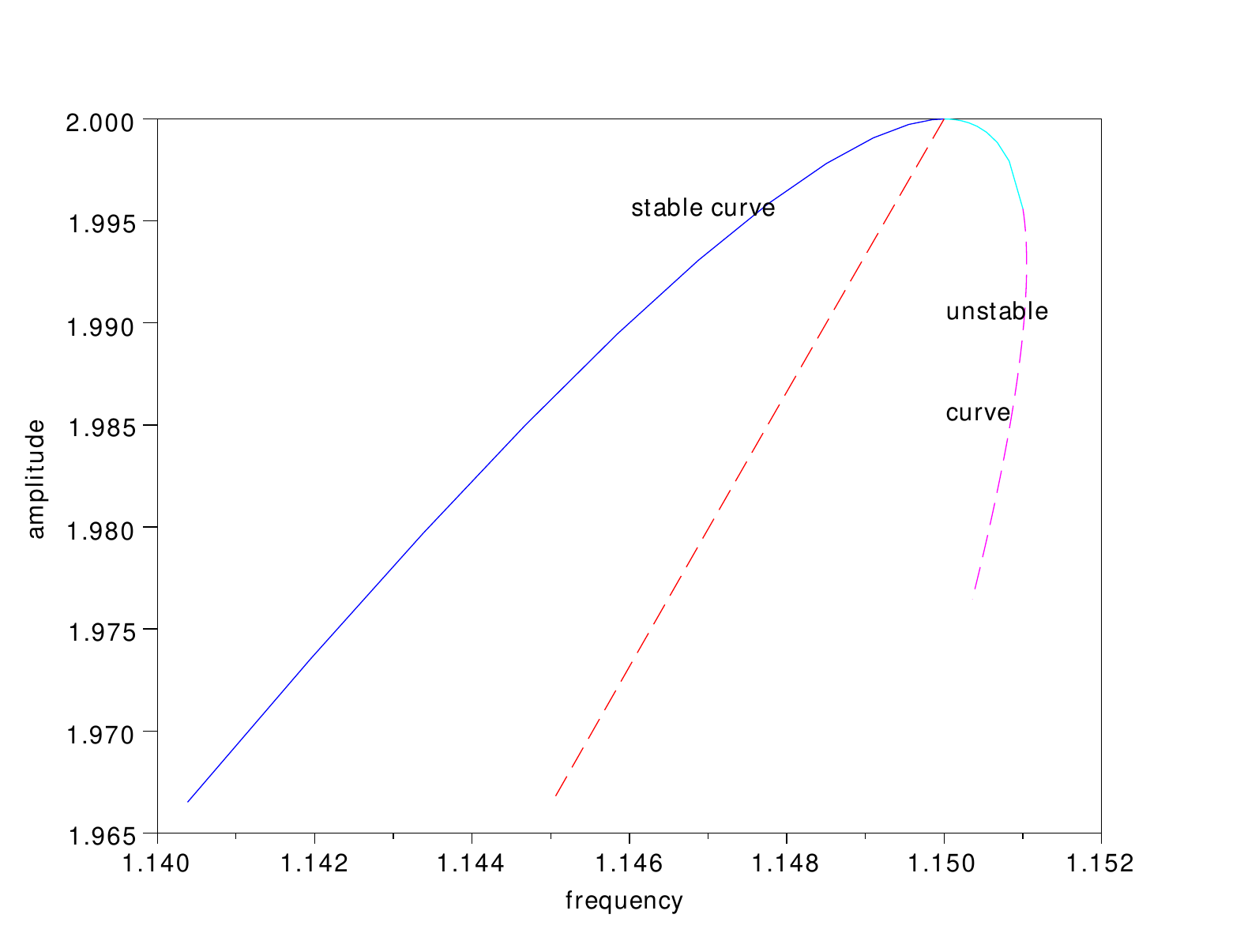}
\caption{amplitude versus frequency of stationary forced solution in
  blue and magenta;
  amplitude of
  free solution in red}
\label{fig:ampli-freq1ddl}
\end{figure}

We have numerically solved equation \eqref{eq:statsol1ddl} for a range of sigma
around the value $\sigma=\frac{3d a^2}{8\omega} $ for which the
amplitude is maximum; we have chosen $\epsilon=.1; \lambda=1/2; F=1; \omega=1;
d=1$; in figure \ref{fig:ampli-freq1ddl}, the solid line shows the solution of this equation that we have
solved with several values of sigma using the routine \texttt{fsolve}
of \texttt{Scilab} which implements a modification of the Powell
hybrid method. We have noticed in proposition \ref{prop:stab1ddl} that
the solution is stable when $\sigma$ is not too large; indeed the
routine \texttt{fsolve} fails to solve  the equation when we
increase too much $\sigma$; to go further this point, with the same routine, we have computed various
values of sigma for decreasing values of the amplitude; we have
plotted this solution with a magenta dotted line.
We have added a red  dotted line which is the amplitude of the free
undamped solution and we notice that it crosses the stationary
solution at the point where it reaches its maximum value as stated in
previous proposition  \ref{prop:stab1ddl}. 

\subsubsection{Dynamic solution}
\begin{figure}[htbp] 
\centering
\begin{minipage}[t]{0.4\linewidth}
\centering
\includegraphics[height=7cm, width=7cm]{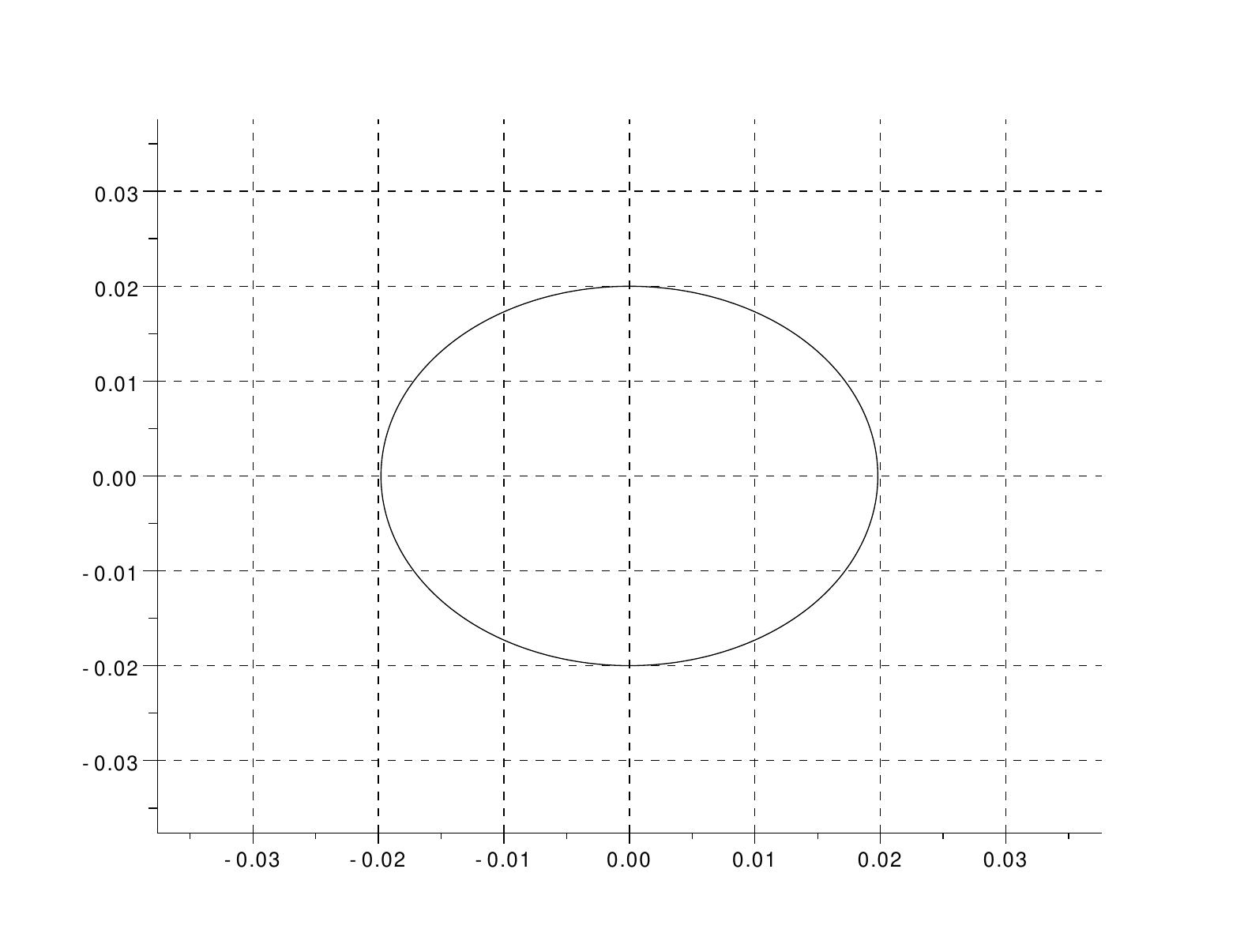}
\caption{Phase portrait for $u_0=0.019796915, \omega_{\epsilon}= 1.0143379$}
\label{fig:soldynu.0197omf1.014}
\end{minipage}
\begin{minipage}[t]{0.4\linewidth}
\centering
\includegraphics[height=7cm, width=4cm]{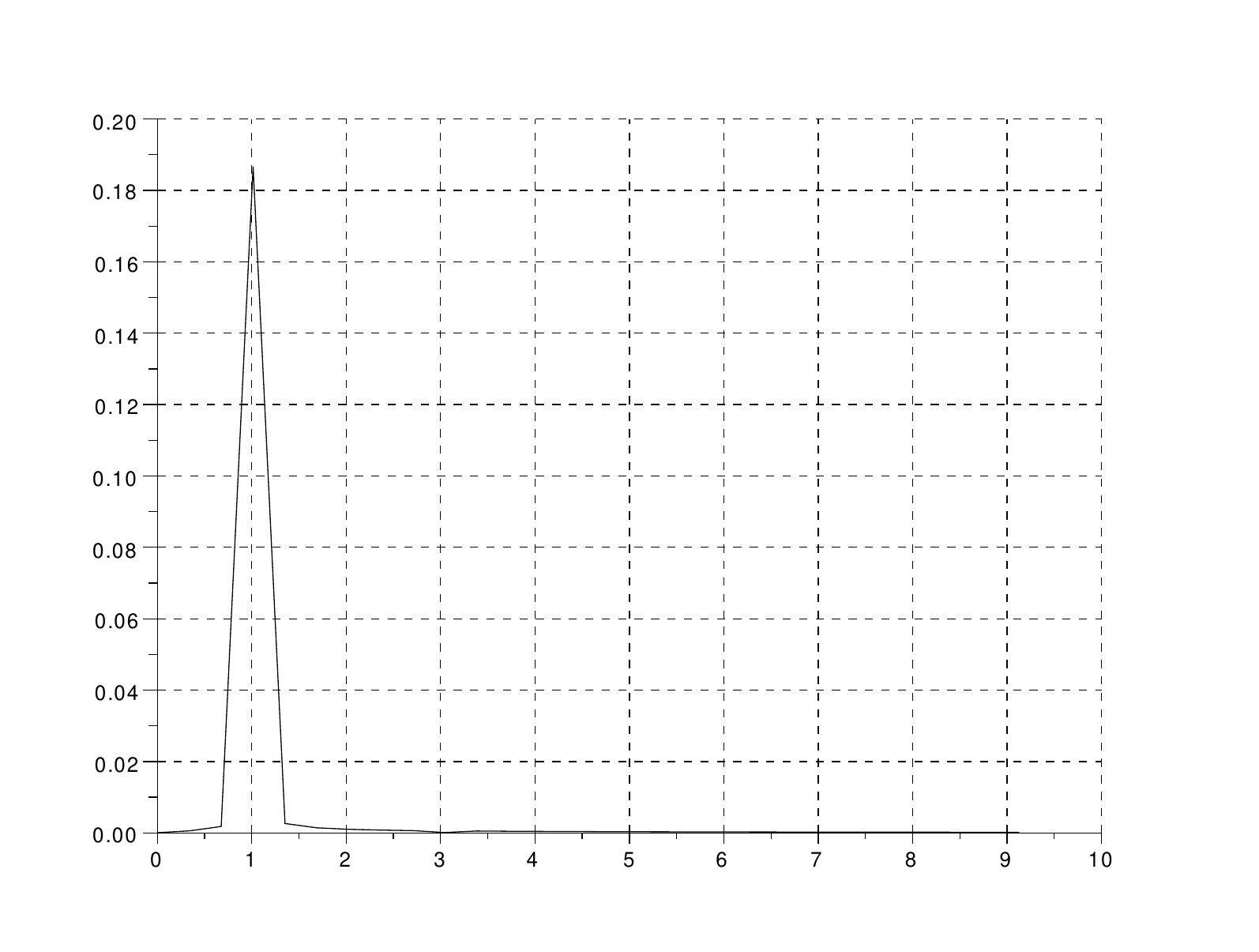}
\caption{Absolute value of the Fourier transform for $u_0=0.019796915, \omega_{\epsilon}= 1.0143379$}
\label{fig:four-soldynu.0197omf1.014}
\end{minipage}
\end{figure}

\begin{figure}[htbp] 
\includegraphics[height=9cm, width=12cm]{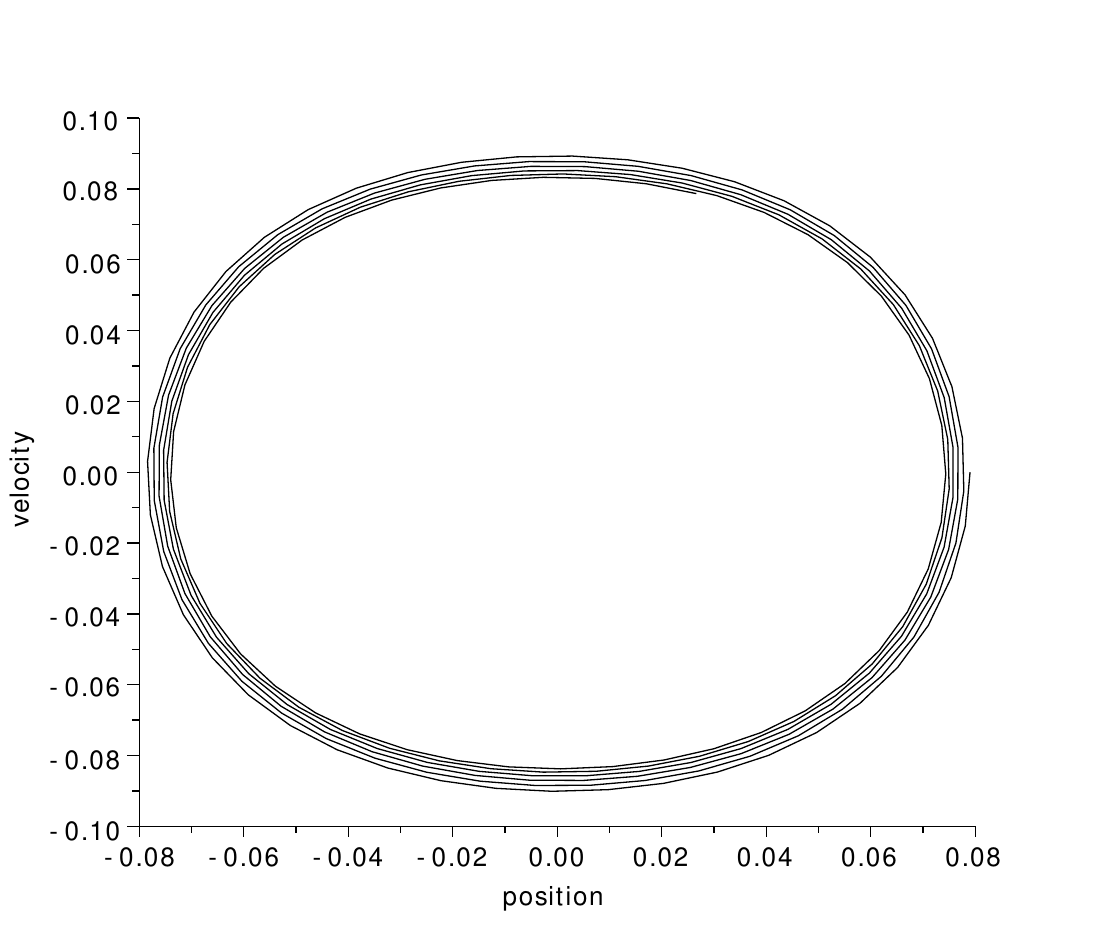}
\caption{Phase portrait for $u_0=0.079, \omega_{\epsilon}= 1.0143379$}
\label{fig:soldynu.079omf1.014}
\end{figure}
\begin{figure}[htbp] 
\includegraphics[height=7cm, width=9cm]{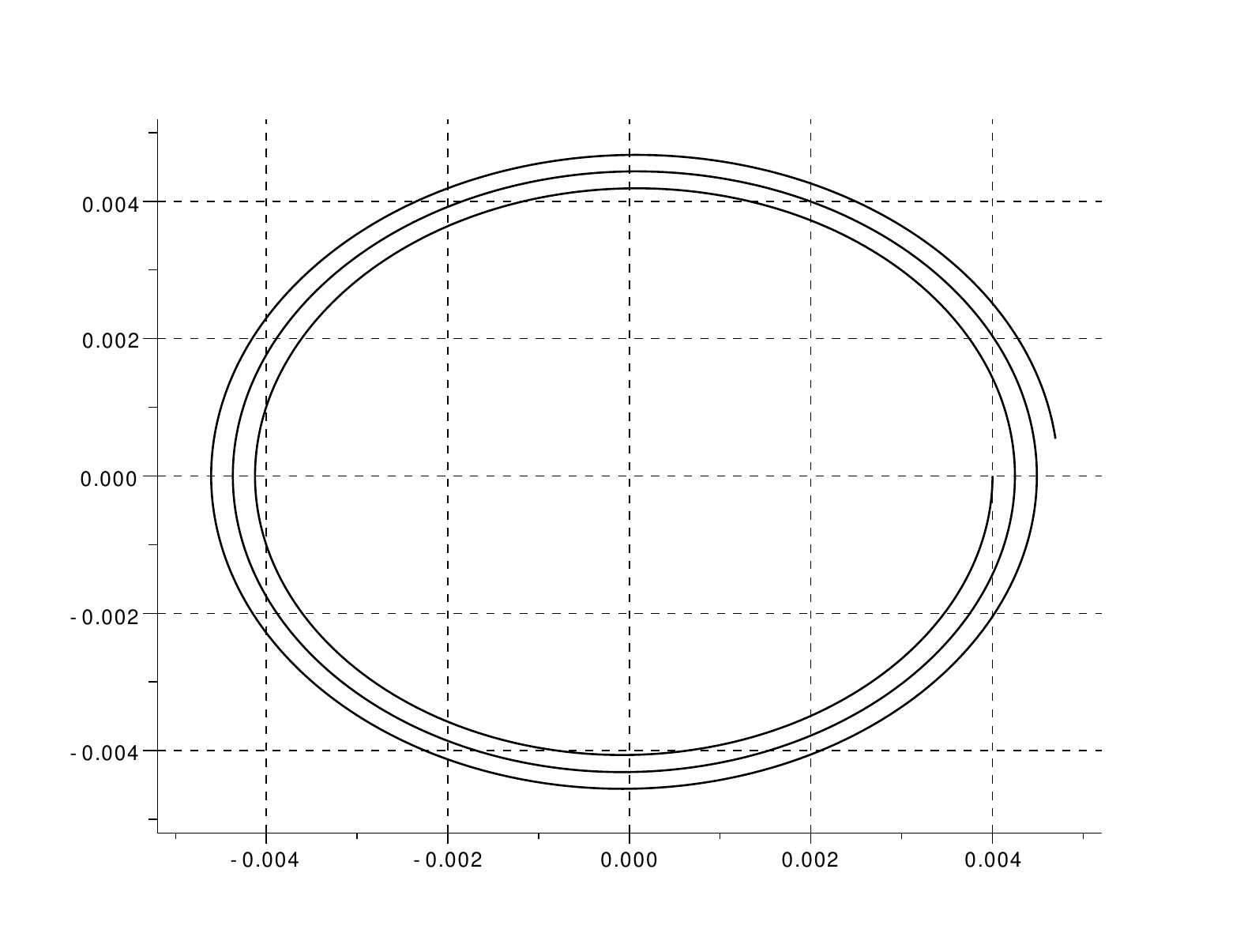}
\caption{Phase portrait for $u_0=0.004, \omega_{\epsilon}= 1.0143379$}
\label{fig:soldynu.004omf1.014}
\end{figure}
\begin{figure}[htbp] 
\includegraphics[height=12cm, width=19cm]{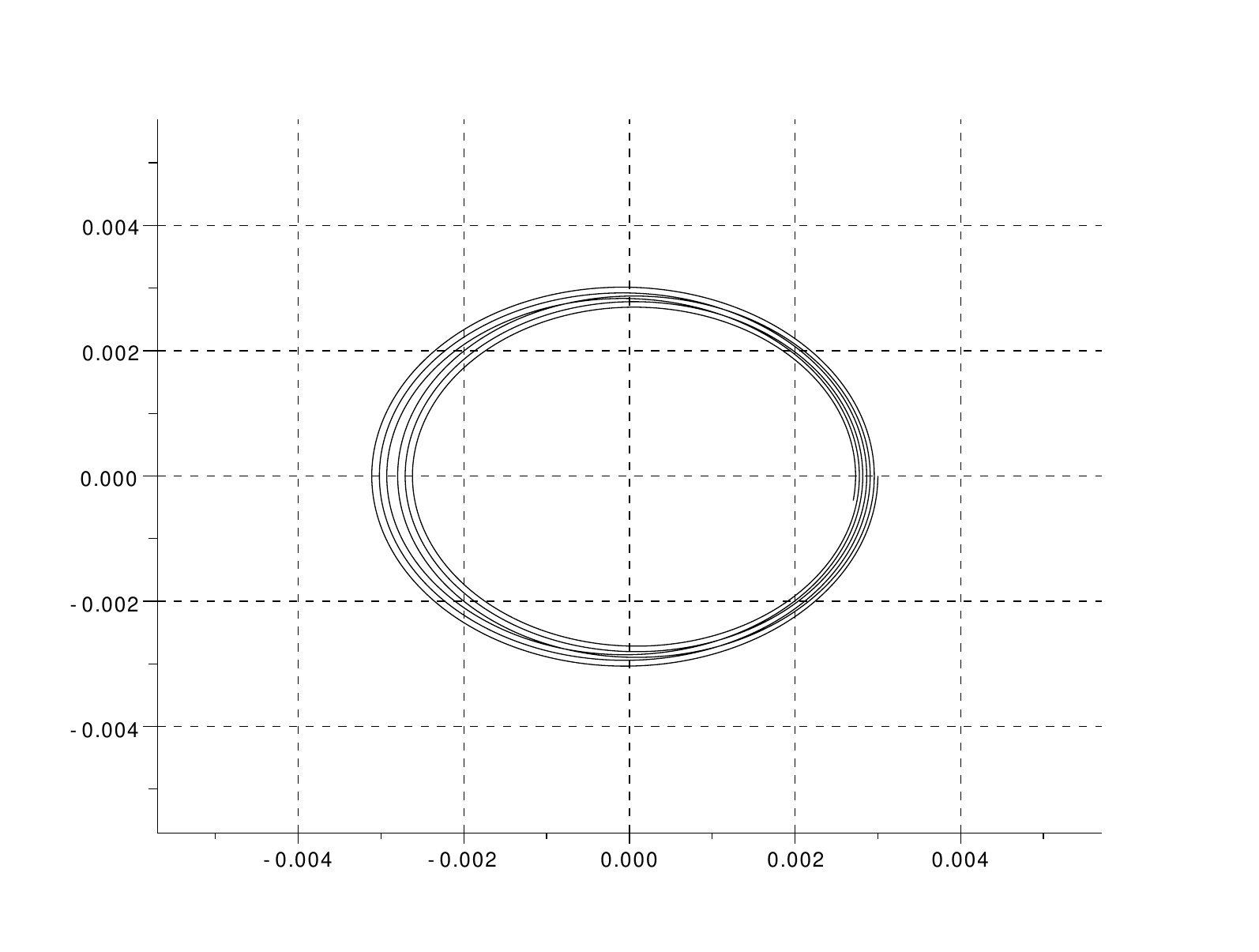}
\caption{Phase portrait for $u_0=0.003, \omega_{\epsilon}= 0.5$}
\label{fig:soldynu.003omf0.5}
\end{figure}

\begin{figure}[htbp] 
\centering
\begin{minipage}[t]{0.4\linewidth}
\centering
\includegraphics[height=7cm, width=10cm]{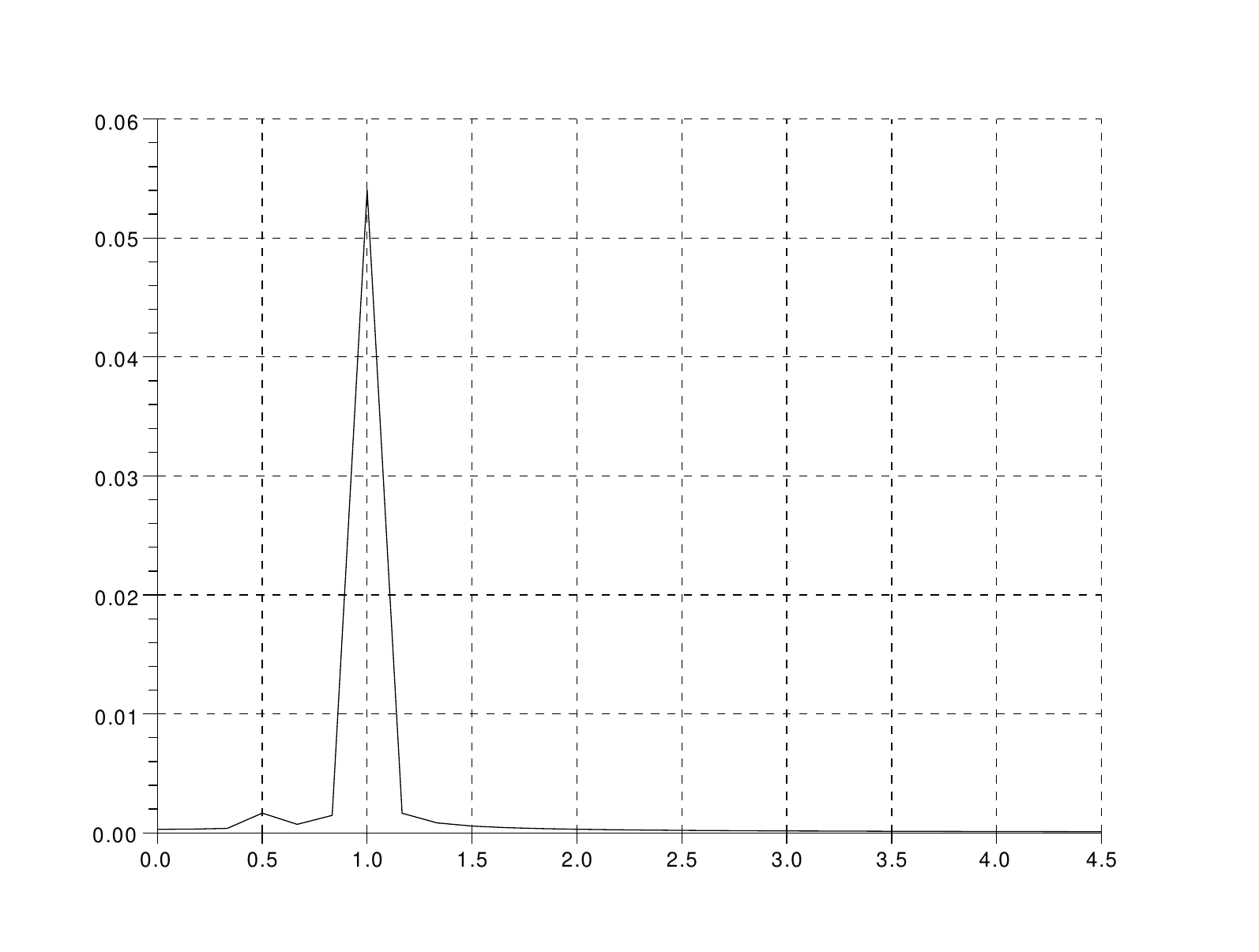}
\caption{Fourier transform for $u_0=0.003, \omega_{\epsilon}= 0.5$}
\label{fig:four-soldynu.003omf0.5}
\end{minipage}
\begin{minipage}[t]{0.4\linewidth}
\centering
\includegraphics[height=7cm, width=10cm]{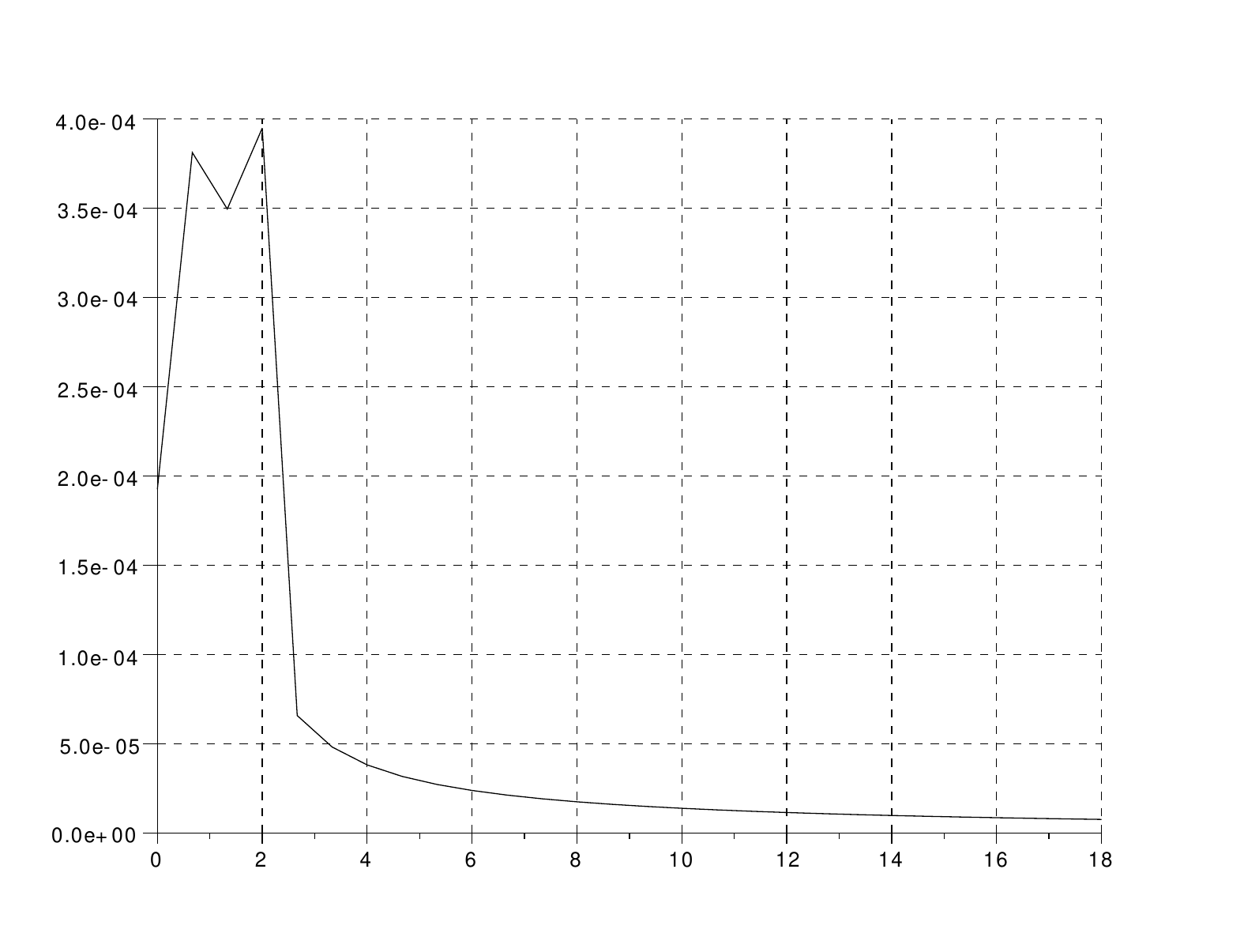}
\caption{Fourier transform for $u_0=0.0001, \omega_{\epsilon}= 2$}
\label{fig:four-soldynu.0001omf2}
\end{minipage}
\end{figure}

\begin{figure}[htbp] 
\includegraphics[height=7cm, width=10cm]{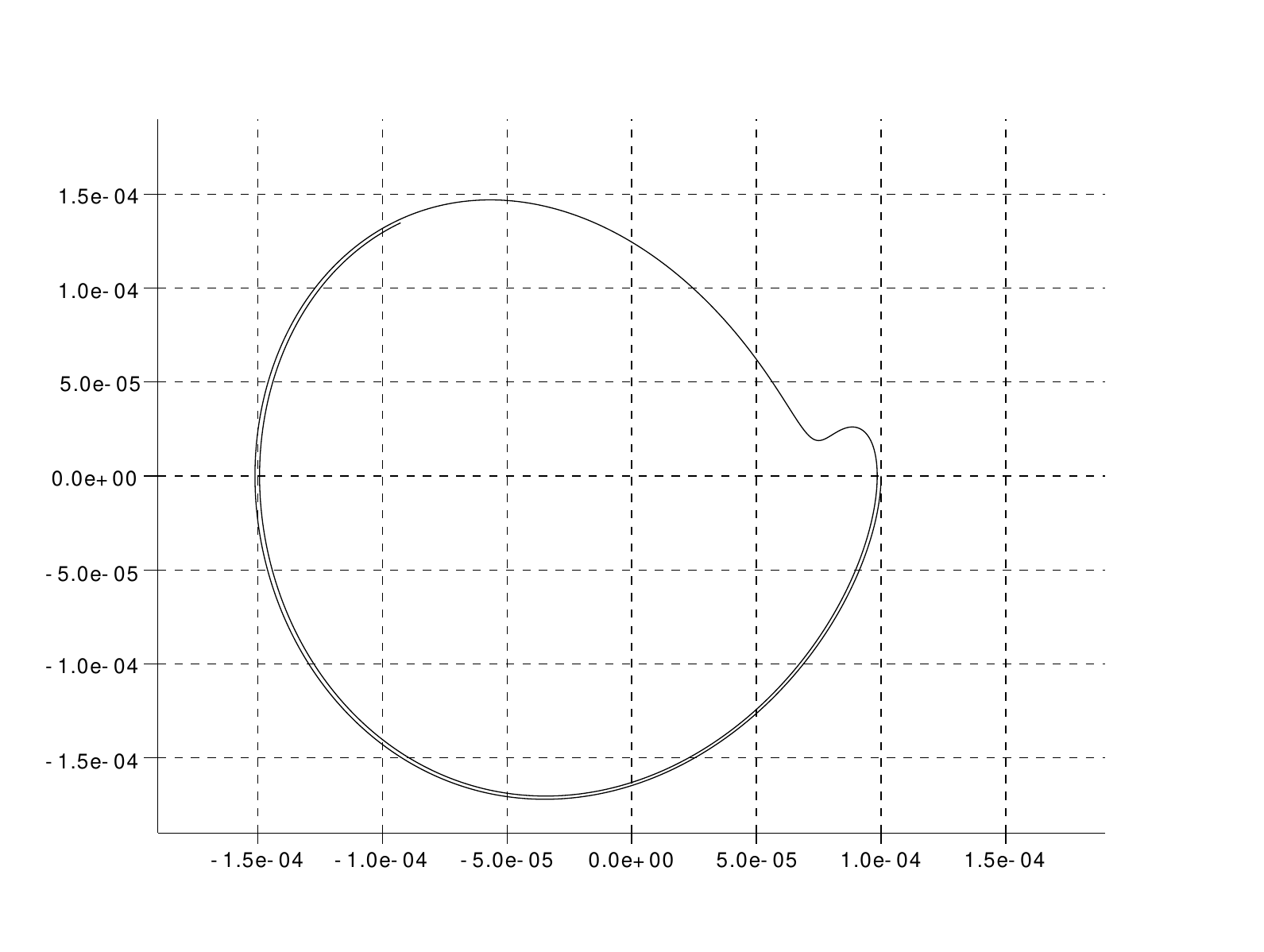}
\caption{Phase portrait for $u_0=0.0001, \omega_{\epsilon}= 2$}
\label{fig:soldynu.0001omf2}
\end{figure}

For various values of the initial condition, we compute numerically
the solution of \eqref{eq:grosse-cub-forc} with a standard theta method.
We use $\epsilon=.01, \lambda=1/2, F=1; \omega=1, d=1$

In figure \ref{fig:soldynu.0197omf1.014}, we find the phase portrait of the solution  with initial values 
$u(0)=0.019796915  , u'(0)=0$  
so that the
angular frequency of the applied force is $\tilde \omega_{\epsilon}= 1.0143379$ , we notice that the solution looks periodic (up to the numerical approximation of the
method); the initial value of the displacement is computed from a
value of $a, \sigma$ of the stationary solution \eqref{eq:statsol1ddl}
which is computed in the
previous paragraph. The Fourier transform in  figure
\ref{fig:four-soldynu.0197omf1.014} shows only one peak at the angular frequency
$1.0143379$ which is the angular frequency of the applied force.

In figure \ref{fig:soldynu.079omf1.014} , for the same value of the frequency of the applied
force, we find the phase portrait of the solution  with initial values 
$u(0)=0.079  , u'(0)=0$; the initial value is  larger than the one
of the stationary solution and we notice that the solution is
decreasing as expected from the stability of the stationary solution.

We find an analogous behavior with an initial value smaller than the
stationary solution; in figure  \ref{fig:soldynu.004omf1.014}, for the same value of the frequency of the applied
force, we find the phase portrait of the solution  with initial values 
$u(0)=0.004  , u'(0)=0$; here the solution is increasing  as expected from the stability of the stationary solution.
 
In the case where $a, \omega_{\epsilon}$ are far from the stationary
curve, we suspect that the frequency content of the response will
involve the frequency of the applied force and some frequency due to
the system; in figure \ref{fig:soldynu.003omf0.5}, we find for $a=0.3,\omega_{\epsilon}=0.5$ the phase portrait of the solution, the
frequency transform is in figure \ref{fig:four-soldynu.003omf0.5}; on this plot of
the Fourier transform, we notice two peaks icluding the angular
frequency $\omega_{\epsilon}=0.5$ of the applied load. 

For large values of  $\omega_{\epsilon}$, the phase portrait is less
regular see figure \ref{fig:soldynu.0001omf2} for $a=0.01,\omega_{\epsilon}=2$ ; we find
also two peaks for the Fourier transform in figure \ref{fig:four-soldynu.0001omf2}.

\clearpage
\section{System with a strong local cubic non linearity}
In the previous  section, we have derived a double scale expansion of a
solution of a one degree of freedom free  vibrations system and damped
vibrations with sinusoidal forcing with frequency close  to  free
vibration frequency.
Now, we extend the results to the case of multiple degrees of freedom.
\subsection{Free vibrations, double scale expansion}
\label{subsec:syst-free-vib}

We consider a system of vibrating masses attached to springs:
\begin{equation}
  \label{eq:system}
  M\ddot u + K u +\Phi(u,\epsilon)=0.
\end{equation}
The mass matrix $M$ and the rigidity matrix $K$ are assumed to be
symmetric and positive definite. See an example in section \ref{subsub:numericals}
We assume that the non linearity is local, all components are zero except for two components $p-1, \;p$ which correspond to the endpoints of some spring assumed to be non linear:
\begin{equation}
  \label{eq:Phi=}
  \Phi_{p-1}(u,\epsilon)=c(u_p-u_{p-1})^2 +\frac{d}{\epsilon}(u_p-u_{p-1})^3, \;
  \Phi_p=- \Phi_{p-1}, ~~ p=2, \dots, n
\end{equation}
If the non linear spring would have been the first or the last one,
the expression of the function $\Phi$ would depend on the boundary
condition; each case would be solved using the same method with slight
changes in some formulas.
In order to get an approximate solution, we are going to write it in the generalized eigenvector basis:
\begin{gather}
\label{eq:vect-val-propre}
  K \phi_k= \omega_k M \phi_k, \text{ with } \phi_k^T M \phi_l= \delta_{kl}, ~~~ k,l=1 \dots, n.
\end{gather}
So we perform the change of function 
\begin{equation}
  \label{eq:u=yphi}
u=\sum_{k=1}^n y_k \phi_k   
\end{equation}

we obtain
\begin{equation}
  \ddot y_k +\omega_k^2 y_k+\phi_k^T \Phi(\sum_{i=1}^n y_i \phi_i,\epsilon)=0, ~~~ k=1 \dots, n.
\end{equation}
As $\Phi$ has only 2 components which are not zero, it can be written
\begin{equation}
\label{eq:yk..Phip}
  \ddot y_k +\omega_k^2 y_k+\left ( \phi_{k,p-1}-\phi_{k,p}\right )\Phi_{p-1}(\sum_{i=1}^n y_i \phi_i,\epsilon)=0,  ~~~ k=1 \dots, n
\end{equation}
or more precisely
\begin{multline}
\label{eq:yk..Phipcd}
  \ddot y_k +\omega_k^2 y_k+\left ( \phi_{k,p-1}-\phi_{k,p}\right ) \Bigg[c \left (\sum_{i=1}^n y_i (\phi_{i,p}-\phi_{i,p-1}) \right)^2 +\\
 \frac{d}{\epsilon} \left (\sum_{i=1}^n y_i( \phi_{i,p}--\phi_{i,p-1}) \right)^3 \Bigg]=0,  ~~~ k=1 \dots, n.
\end{multline}
As for the 1 d.o.f. case, we use a double scale expansion to compute
an approximate  small solution; more precisely, we look for a solution
close to the normal mode of the associated linear system; we denote
this mode by subscript $1$; obviously by permuting the coordinates,
this subscript could be anyone (different of  $p$, this case would
give similar results with slightly different formulas); we set
\begin{equation}
  T_0=\omega_1 t, \quad T_1=\epsilon t
\end{equation}
and  we use  the {\it ansatz}:
\begin{equation}
  y_k=\epsilon y_k^1(T_0,T_1) +\epsilon^2 r_k(T_0,T_1,\epsilon)
\end{equation}
so that
\begin{equation}
 \frac{d^2y_k}{dt^2}=\epsilon \omega_1^2D^2_0y_k^1  +\epsilon^2 [2  \omega_1 D_0D_1 y_k^1 + \omega_1^2 D_0^2r_k ] +\epsilon^3 [D_1^2 y_k^1 +  \mathcal{D}_2 r_k] 
\end{equation}
with
\begin{equation}
  \mathcal{D}_2 r_k = \frac{1}{\epsilon}\left( \frac{d^2 r_k}{d t^2} -\omega_1^2 D_0^2 r_k \right)= 2 \omega_1 D_0D_1 r_k +\epsilon D_1^2 r_k.
\end{equation}
We plug previous expansions into \eqref{eq:yk..Phipcd}.
By identifying the coefficients of the powers of $\epsilon$ in the expansion of \eqref{eq:yk..Phip}, we get:
\begin{align}
\Bigg \{
  \begin{array}[h]{rl}
  &\omega_1^2D_0^2 y_k^1 + \omega_k^2 y_k^1=0, \  k=1 \dots, n\\
 & \omega_1^2 D_0^2 r_k + \omega_k^2 r_k=S_{2,k}, \  k=1 \dots, n \quad \text{ with } \label{eq:D02rk=S2k}\\
  \end{array} 
\end{align}
 to simplify, the manipulations, we  set 
$$\delta_p \phi_l=(\phi_{l,p}-\phi_{l,p-1}),$$ so:
\begin{multline}
  S_{2,k}= \frac{-\delta_p \phi_k}{\epsilon^2} \Phi_{p-1} \left
    (\sum_i(\epsilon y_i^1+\epsilon^2r_i )\phi_i,\epsilon \right)- 2 \omega_1 D_0D_1 y_k^1 -
\epsilon {\cal R}_k
\end{multline}
with 
\begin{equation}
  {\cal R}_k=\left(D_1^2 y_k^1 +{\cal D}_2r_k \right)
\end{equation}
and
\begin{multline}
  S_{2,k}= \frac{-\delta_p \phi_k}{\epsilon^2}  \left [c\left (\sum_i(\epsilon y_i^1+\epsilon^2r_i )\delta_p\phi_i \right)^2 +\frac{d}{\epsilon} \left(\sum_i(\epsilon y_i^1+\epsilon^2r_i )\delta_p\phi_i \right)^3   \right] \\
- 2 \omega_1 D_0D_1 y_k^1 -
\epsilon {\cal R}_k.
\end{multline}
The formula may be expanded
\begin{multline}
    S_{2,k}=-\delta_p \phi_k \left[c\sum_{i,j} y_i^1y_j^1 \delta_p\phi_i \delta_p\phi_j +d \sum_{i,j,l} y_i^1y_j^1 y_l^1\delta_p\phi_i \delta_p\phi_j \delta_p\phi_l \right] \\
-2 \omega_1 D_0D_1 y_k^1-\epsilon R_k\left(y^1,r,\epsilon \right)
\end{multline}

where
\begin{multline}
 R_k(y^1,r,\epsilon)= {\cal R}_k \\
+\delta_p \phi_k \Bigg[
\epsilon c \sum_{i,j} \left (2 y_i^1 r_j  +
\epsilon r_ir_j  \right) \delta_p \phi_i \delta_p\phi_j + \\
\epsilon d \sum_{ijl} \left (3 y_i^1 y_j^1r_l  +3\epsilon  y_i^1 r_jr_l   +3 \epsilon^2  r_i r_jr_l\right )\delta_p \phi_i \delta_p \phi_j  \delta_p \phi_l  
 \Bigg].
\end{multline}

We set
 $\theta(T_0,T_1)= T_0+\beta(T_1)$ and we note that $D_0 \theta=1, \; D_1 \theta=D_1 \beta$; we solve the first set of equations \eqref{eq:D02rk=S2k}, imposing $O(\epsilon)$ initial Cauchy data for $k \neq 1$; we get:
 \begin{equation}
\label{eq:y1=}
   y_1^1=a(T_1)cos(\theta), \; \text{ and }  y_k^1=O(\epsilon), \; k=2 \dots n
 \end{equation}
we put terms involving $y_k^1, \; k \ge 2$ into $R_k$; so we obtain
\begin{multline}
S_{2,1}= -\delta_p \phi_1\left[   c\left ( y_1^1\delta_p\phi_1 \right )^2 +d\left ( y_1^1 \delta_p\phi_1 \right ) ^3  \right] \\
-2 \omega_1 D_0D_1y_1^1 - \epsilon R_1(y^1,r,\epsilon) \text{ and }
\end{multline}
\begin{multline}
S_{2,k}= -\delta_p \phi_k\left[   c\left ( y_1^1\delta_p\phi_1 \right )^2 +d\left ( y_1^1 \delta_p\phi_1 \right ) ^3  \right] \\
 - \epsilon R_k(y^1,r,\epsilon) \text{ for } k \neq 1.
 \label{eq:S2syst=}
\end{multline}
Using \eqref{eq:y1=}, we get:
\begin{multline}
  S_{2,1}= -\delta_p \phi_1\Big[   \frac{ca_1^2}{2}(1+cos(2\theta))\left ( \delta_p\phi_1 \right )^2+\\\frac{da_1^3}{4}\left ( (cos(3\theta)+3cos(\theta)) ( \delta_p\phi_1)^3 \right )   \Big] + \\
2\omega_1(D_1 a_1 sin(\theta)+ a_1D_1 \beta_1 cos(\theta)) - \epsilon  R_1(y^1,r,\epsilon) \text{ and }
\end{multline}
\begin{multline}
  S_{2,k}= -\delta_p \phi_k\Big[   \frac{ca_1^2}{2}(1+cos(2\theta))\left ( \delta_p\phi_1 \right )^2+\\\frac{da_1^3}{4}\left ( (cos(3\theta)+3cos(\theta)) ( \delta_p\phi_1)^3 \right )   \Big] + \\
 - \epsilon  R_k(y^1,r,\epsilon) \text{ for } k \neq 1.
\end{multline}

We gather the terms at angular frequency $1$ in $S_{2,1}$
\begin{multline}
  S_{2,1}= -\delta_p \phi_1\Big[
\frac{da_1^3}{4} 3cos(\theta)(\delta_p\phi_1)^3   \Big]  \\
+2\omega_1(D_1 a_1 sin(\theta)+ a_1D_1 \beta_1 cos(\theta)) + S_{2,1}^{\sharp} - \epsilon  R(y^1,r,\epsilon)
\end{multline}
with
 \begin{multline}
   S_{2,1}^{\sharp}= -\delta_p \phi_1 \left[  \frac{ca_1^2}{2}(1+cos(2\theta)) ( \delta_p\phi_1)^2  +\frac{da_1^3}{4} cos(3\theta)(\delta_p\phi_1)^3  \right].
\end{multline}
If we enforce
\begin{multline}
  \label{eq:D1a1D1beta1}
  D_1a_1=0, ~\text{ and }~ 2\omega_1a_1D_1 \beta_1
  =(\delta_p\phi_1)^4\frac{3da^3}{4} \text{ so that  } \\ a_1=a_{1,0},
  \quad \beta_1=\beta_{1,0}T_1 \text{ with } \beta_{1,0}=\frac{3da^2}{8 \omega}(\delta_p\phi_1)^4 T_1
\end{multline}
the right hand side
\begin{equation}
  S_{2,1}= S_{2,1}^{\sharp}-\epsilon R_1(y^1,r,\epsilon)
\end{equation}
contains no term at angular frequency $1$; 
for the other components, without any manipulation,  there is no
trouble with the frequencies if we assume that all the
eigenfrequencies $\omega_k$ for $k=2 \dots n$ are not multiple of $\omega_1$
($\omega_k \neq q \omega_1$ for $q=1$ or $q=2$, $q=3$).

In order to prove that $r$ is bounded, after the elimination of terms at frequency $1$, we write back the equations with the variable $t$, for the second set of equations of \eqref{eq:D02rk=S2k}.
\begin{equation}
  \omega_1^2 \ddot r_k  +\omega_k^2 r_k=\tilde S_{2,k} \quad \text{ for } k=1, \dots n
\end{equation}
with
\begin{equation}
  \tilde S_{2,1}=S_{2,1}^{\sharp}-\epsilon \tilde R_1(y^1,r,\epsilon)
\end{equation}
where
 \begin{multline}
   S_{2,1}^{\sharp}= -\delta_p \phi_1 \bigg [  \frac{ca_1^2}{2}(1+cos(2(\omega_1t+\beta_{1,0}\epsilon t)) ( \delta_p\phi_1)^2  \\
 +\frac{da_1^3}{4} cos(3(\omega_1t+\beta_{1,0}\epsilon t))(\delta_p\phi_1)^3  \bigg]
\end{multline}

and
\begin{multline}
  \tilde S_{2,k}= -\delta_p \phi_k\Bigg[   \frac{ca_1^2}{2}(1+cos(2 (\omega_1t+\beta_{1,0}\epsilon t) ))\left ( \delta_p\phi_1 \right )^2+\\\frac{da_1^3}{4}\left ( (cos(3(\omega_1t+\beta_{1,0}\epsilon t))+3cos((\omega_1t+\beta_{1,0}\epsilon t))) ( \delta_p\phi_1)^3 \right )   \Bigg]  \\
 - \epsilon  \tilde R_k(y^1,r,\epsilon) \text{ for } k \neq 1
\end{multline}
and where
\begin{equation}
  \tilde R_k(y^1,r,\epsilon)= R_k(y^1,r,\epsilon)-{\cal D}_2 r_k
\end{equation}

\begin{proposition}
  Under the assumption that $\omega_k$ and $\omega_1$ are $\mathbb{Z}$ independent for $k \ne 1$, there exists $\gamma>0$ such that for all $t \le t_{\epsilon} =\frac{\gamma}{\epsilon}$, the solution of \eqref{eq:yk..Phip} with initial data
  \begin{equation}
y_1(0)    =\epsilon a_{1,0}, \quad \dot y_1(0)=0, \quad y_k(0)=O(\epsilon^2), \quad \dot y_k(0)=0
  \end{equation}
satisfy the following expansion
\begin{align}
  y_1(t)&= \epsilon a_0cos(\nu_{\epsilon}t)+\epsilon^2r_1(\epsilon,t) \text{ with } \nu_{\epsilon}=\omega_1+3\epsilon\frac{da_0^2}{8\omega_1}(\phi_{1,p}-\phi_{1,p-1})^4\\
y_k(t)&=\epsilon^2r_k(\epsilon,t)
\end{align}
with $r_k$ uniformly bounded in ${\cal C}^2(0,t_{\epsilon})$ for $k=1,\dots n$
and $\omega_1, \phi_1$ are the eigenvalue and eigenvectors defined in \eqref{eq:vect-val-propre}.
\end{proposition}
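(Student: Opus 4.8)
The plan is to repeat, component by component, the argument used for the one–degree–of–freedom Proposition, the essential tool again being lemma \ref{eq:lemmew } applied to each of the $n$ scalar equations $\omega_1^2 \ddot r_k + \omega_k^2 r_k = \tilde S_{2,k}$. Dividing by $\omega_1^2$ puts each of these in the form $\ddot r_k + (\omega_k/\omega_1)^2 r_k = \tilde S_{2,k}/\omega_1^2$, a harmonic oscillator of natural frequency $\omega_k/\omega_1$ driven by a source that splits, exactly as before, into a bounded quasi-periodic part $S_{2,k}^\sharp$ and a remainder $\epsilon \tilde R_k$. The whole point of enforcing the solvability conditions \eqref{eq:D1a1D1beta1} was to strip from $S_{2,1}^\sharp$ every term at angular frequency $1$, so that the source of the $k=1$ equation is orthogonal to the resonant mode $e^{\pm i T_0}$ and lemma \ref{eq:lemmew } applies to $r_1$ verbatim as in the scalar case, producing the free angular frequency $\nu_\epsilon$ once $a_1=a_{1,0}$ and $\beta_1=\beta_{1,0}T_1$ are reinserted.

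First I would treat the components $k \neq 1$, where the novelty lies. For these the linear part oscillates at $\omega_k/\omega_1$, while the source $\tilde S_{2,k}$ carries only the angular frequencies $0,1,2,3$ of $T_0$ (up to the slow modulation $\beta_{1,0}\epsilon t$ appearing inside the cosines). The hypothesis that $\omega_k$ and $\omega_1$ are $\mathbb{Z}$-independent for $k \neq 1$ — concretely $\omega_k \neq q\,\omega_1$ for $q=1,2,3$ — guarantees that none of these driving frequencies coincides with the natural frequency $\omega_k$, so no secular term is generated and the source meets the orthogonality hypothesis of the lemma. Because the shifted frequencies $q(\omega_1+\beta_{1,0}\epsilon)$ depend continuously on $\epsilon$ and reduce to $q\omega_1$ at $\epsilon=0$, the strict inequality persists for $\epsilon$ small, which is precisely what lets the resulting bound hold uniformly up to $t_\epsilon=\gamma/\epsilon$.

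Next I would verify the remainder hypothesis. Each $\tilde R_k = R_k - \mathcal{D}_2 r_k$ is, by its explicit expression, a polynomial in the variables $r_1,\dots,r_n,\,y_1^1,\dots,y_n^1$ and $\epsilon$ whose coefficients are the bounded quantities $\delta_p\phi_i\,\delta_p\phi_j\,\delta_p\phi_l$ together with the bounded trigonometric factors coming from $y_1^1=a\cos\theta$; hence each $\tilde R_k$ is Lipschitz on bounded subsets, uniformly in $t$. Feeding the lemma with $g=(\tilde R_1,\dots,\tilde R_n)$ and the sources $S_{2,k}^\sharp$ then yields uniform boundedness of every $r_k$ in $\mathcal{C}^2(0,t_\epsilon)$, and recombining $\omega_1 t+\beta_{1,0}\epsilon t=\nu_\epsilon t$ gives the stated expansions for $y_1$ and $y_k$.

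The main obstacle is the coupling. Unlike the scalar case, the $n$ equations share their unknowns through the remainders $\tilde R_k$, so lemma \ref{eq:lemmew } must be applied to the coupled system rather than to a single isolated oscillator, and the Lipschitz estimate has to be established for the full vector field $g$ as a function of $r=(r_1,\dots,r_n)$. Bundling the components into one vector equation $\ddot r+\Omega^2 r=\tilde S_2/\omega_1^2$, with $\Omega=\mathrm{diag}(\omega_k/\omega_1)$, and checking the non-resonance hypothesis simultaneously for every index $k$ is the step that genuinely requires the arithmetic independence assumption and is where the argument departs from the one-degree-of-freedom proof.
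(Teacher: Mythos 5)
Your proposal is correct and follows essentially the same route as the paper: split each right-hand side into the non-resonant bounded part $S_{2,k}^{\sharp}$ plus $\epsilon$ times a locally Lipschitz remainder $\tilde R_k$, and invoke the Duhamel/Gronwall lemma on the time scale $\gamma/\epsilon$. The ``main obstacle'' you flag at the end --- treating the coupled equations as one vector system with a diagonal frequency matrix and a Lipschitz vector field $g=(\tilde R_1,\dots,\tilde R_n)$ --- is precisely what the paper's lemma \ref{eq:lemmew-syst} in the appendix already provides, so the paper's proof consists of exactly your verification of its hypotheses (non-resonance from the $\mathbb{Z}$-independence, boundedness and orthogonality of the $S_k$, Lipschitz continuity of $\tilde R$) followed by a direct application of that lemma rather than of the scalar lemma \ref{eq:lemmew }.
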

\begin{corollary}
  The solution of \eqref{eq:system}, \eqref{eq:Phi=} with 
$$\phi_1^T u(0)=\epsilon a_{1,0}, ~~ \phi_1^T \dot u(0)=0, ~~ \phi_k^T u(0)=O(\epsilon^2), ~~ \phi_k^T  \dot u(0)=0$$
with $\omega_k, \phi_k$ are the eigenvalue and eigenvectors defined in \eqref{eq:vect-val-propre}
\begin{equation}
\text{ is }  ~~~ u(t)=\sum_{k=1}^n y_k(t) \phi_k
\end{equation}
with the expansion of $y_k$ of previous proposition.
\end{corollary}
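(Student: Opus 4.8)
The plan is to reduce the multi-degree-of-freedom statement to $n$ scalar remainder estimates of exactly the type already handled in the free one-degree-of-freedom proposition, and then to transfer the result back to $u$. Most of the construction is in place above: the leading modal profile is fixed by $y_1^1=a(T_1)\cos\theta$ together with the secular-elimination conditions \eqref{eq:D1a1D1beta1}, which force $a=a_{1,0}$ and the amplitude-dependent phase $\beta_{1,0}$, hence the shifted frequency $\nu_\epsilon$; the modes $k\neq1$ are started at $O(\epsilon^2)$, so that $y_k^1=O(\epsilon)$ and these contributions are absorbed into the remainders $R_k$. It therefore remains only to show that each $r_k$ is uniformly bounded in $\mathcal{C}^2(0,t_\epsilon)$ on the long interval $t\le \gamma/\epsilon$.

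First I would treat the fundamental component $k=1$. Its remainder equation reduces to $\ddot r_1+r_1=\tilde S_{2,1}/\omega_1^2$ with $\tilde S_{2,1}=S_{2,1}^\sharp-\epsilon\tilde R_1$; by construction $S_{2,1}^\sharp$ carries only the angular frequencies $2\omega_1$ and $3\omega_1$, so it is orthogonal to $e^{\pm i\omega_1 t}$ and lemma \ref{eq:lemmew} applies word for word as in the one-degree-of-freedom case, $\tilde R_1$ being a polynomial in $r$ with bounded coefficients and hence Lipschitz on bounded sets.

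Next I would treat the modes $k\neq1$, where the genuine new content lies. Returning to the variable $t$, the $k$-th remainder solves a harmonic oscillator of natural angular frequency $\omega_k$ driven by $S_{2,k}^\sharp$, whose spectrum now consists of the frequencies $\omega_1,2\omega_1,3\omega_1$ inherited from the quadratic and cubic nonlinearity evaluated on the fundamental mode (the frequency-$\omega_1$ term is \emph{not} removed here, since the secular condition was imposed only for $k=1$). \textbf{The main obstacle is the control of resonance:} the forced oscillator has a bounded response only if none of the driving frequencies equals the free frequency, i.e. $\omega_k\neq q\,\omega_1$ for $q=1,2,3$. This is exactly what the hypothesis that $\omega_k$ and $\omega_1$ are $\mathbb{Z}$-independent provides (indeed one only needs to exclude $q\in\{1,2,3\}$). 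Under this non-resonance condition $S_{2,k}^\sharp$ is orthogonal to the homogeneous solutions $e^{\pm i\omega_k t}$, and lemma \ref{eq:lemmew} again applies.

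Finally I would close the coupling. The remainders are not independent, since each $\tilde R_k$ depends on the whole vector $r=(r_1,\dots,r_n)$; however this coupling enters only through the prefactor $\epsilon$ and is Lipschitz on bounded sets, so I would apply the lemma (a variation-of-constants plus Gronwall estimate) to the full vector $(r,\dot r)$ at once: the accumulated growth over a time span of length $\gamma/\epsilon$ is then of order $e^{C\gamma}$, which stays bounded uniformly in $\epsilon$ once $\gamma$ is chosen small enough, yielding the claimed $\mathcal{C}^2$ bound. The corollary is then immediate: writing $u=\sum_k y_k\phi_k$ as in \eqref{eq:u=yphi} and using the $M$-orthonormality \eqref{eq:vect-val-propre} to convert the data $\phi_k^T u(0),\phi_k^T\dot u(0)$ into the modal Cauchy data of the proposition, substitution of the expansions of the $y_k$ gives the stated expansion of $u$.
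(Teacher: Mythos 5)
Your proposal is correct and follows essentially the same route as the paper: the paper simply invokes its system lemma \ref{eq:lemmew-syst} (with $S_1=S_{2,1}^{\sharp}$, $S_k=S_{2,k}$, $g=\tilde R$) and then passes to $u$ via the change of function \eqref{eq:u=yphi}, and your three steps (non-resonance for the linear part mode by mode, then a vector Duhamel--Gronwall estimate over $t\le\gamma/\epsilon$ for the coupled remainder) are precisely the content of that lemma's proof, here written out explicitly.
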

\begin{proof}
  For the proposition, we use  lemma   \ref{eq:lemmew-syst}. Set
  $S_1=S_{2,1}^{\sharp}, ~~ S_k=S_{2,k} $ for $k=1, \dots n$; as we
  have enforced \eqref{eq:D1a1D1beta1}, the functions $S_k$ are
  periodic, bounded, and are orthogonal to $e^{\pm it}$, we have
  assumed  that $\omega_k$ and $\omega_1$ are $\mathbb{Z}$ independent
  for $k \ne 1$; so  $S_k, ~ k=1,\dots, n$ satisfies the lemma hypothesis. Similarly,
  set $g=\tilde R$, its components are polynomials in $r$ with coefficients which are bounded functions, so it is lipschitzian on the bounded subsets of $\mathbb R$, it satisfies the hypothesis of  lemma   \ref{eq:lemmew-syst} and so the proposition is proved.
The corollary is an easy consequence of the proposition and the change of function \eqref{eq:u=yphi}
\end{proof}
\begin{rem}
  We have obtained a periodic asymptotic expansion of a solution of
  system \eqref{eq:system}, \eqref{eq:Phi=}; they are called non linear normal
  modes in the mechanical community (\cite{nnm-kpgv,
    jiang-pierre-shaw04}. In the next section, we shall derive that
  the frequencies of the normal mode are resonant frequencies for an
  associated forced system, the so called primary resonance; secondary
  resonance could be derived along similar lines.
\end{rem}

\subsection{Forced, damped vibrations, double scale expansion}

\subsubsection{Derivation of the expansion}

We consider a similar system of forced  vibrating masses attached to
springs with a light damping:
\begin{equation}
  \label{eq:system-force}
  M\ddot u + \epsilon C \dot u +K u +\Phi(u,\epsilon)=\epsilon^2 F cos(\tilde \omega_{\epsilon} t)
\end{equation}
with the same assumptions as in subsection
\ref{subsec:syst-free-vib}. We assume that the frequency of the
driving force is close to some frequency of the linearised system
(primary resonance); we
denote this frequency with the subscript $1$: $\tilde
\omega_{\epsilon}=\omega_1+\epsilon \sigma.$

We assume that the non linearity is local, all components are zero except for two components $p-1, \;p$ which correspond to the endpoints of some spring assumed to be non linear.
As for free vibrations, we perform  the change of function 
\begin{equation}
  \label{eq:u=yphi}
u=\sum_{k=1}^n y_k \phi_k   
\end{equation}
with $\phi_k$, the generalised eigenvectors of
\eqref{eq:vect-val-propre}.
As the damping matrix $C$ is usually not well defined, to simplify, we
assume that it is diagonal in the eigenvector basis $\phi_k, \; k=1,
\dots n$.
We obtain
\begin{equation}
  \ddot y_k + \epsilon \lambda_k \dot y_k +\omega_k^2 y_k+\phi_k^T
  \Phi(\sum_{i=1}^n y_i \phi_i, \epsilon)=\epsilon^2 f_k cos(\tilde \omega_{\epsilon} t) , ~~~ k=1 \dots, n
\end{equation}
with $f_k=\phi_k^T F$. 
As for the free vibration case,  $\Phi$ has only 2 components which
are not zero, so the system can be written
\begin{multline}
\label{eq:yk..Phip-force}
  \ddot y_k + \epsilon \lambda_k \dot y_k+\omega_k^2 y_k+\left (
    \phi_{k,p-1}-\phi_{k,p}, \epsilon \right )\Phi_{p-1}(\sum_{i=1}^n y_i
  \phi_i)=\epsilon^2 f_k cos(\tilde \omega_{\epsilon} t) ,  \\ ~~~ k=1 \dots, n
\end{multline}
or more precisely
\begin{multline}
\label{eq:yk..Phipcdforc}
  \ddot y_k +\epsilon \lambda_k \dot y_k+\omega_k^2 y_k+\left ( \phi_{k,p-1}-\phi_{k,p}\right ) \Bigg[c \left (\sum_{i=1}^n y_i (\phi_{i,p}-\phi_{i,p-1}) \right)^2 +\\
 \frac{d}{\epsilon} \left (\sum_{i=1}^n y_i( \phi_{i,p}-\phi_{i,p-1})
 \right)^3 \Bigg]=\epsilon^2 f_k cos(\tilde \omega_{\epsilon} t),  \\~~~ k=1 \dots, n.
\end{multline}
As for the 1 d.o.f. case, we use a double scale expansion to compute
an approximate  small solution; we use a fast scale which is $\epsilon$ dependent; we set
\begin{equation}
  T_0=\tilde \omega_{\epsilon} t, \quad T_1=\epsilon t
\end{equation}
and we use the {\it ``ansatz''}
\begin{equation}
  y_k=\epsilon y_k^1(T_0,T_1) +\epsilon^2 r_k(T_0,T_1,\epsilon)
\end{equation}
so that

\begin{equation}
  \label{eq:dykdt=}
 \frac{dy_k}{dt}=  \epsilon \big [  \omega_1 D_0y_k^1+ \epsilon \sigma D_0
 y_k^1 +\epsilon D_1 y_k^1 \big ] +  \epsilon^2 \omega_1 D_0 r_k +\epsilon^2 (\frac{dr_k}{dt} -\omega_1 D_0 r_k)
\end{equation}

\begin{multline}
 \frac{d^2 y_k}{dt^2}=\epsilon \bigg \{  \omega_1^2 D_0^2 y_k^1
 +2\epsilon \omega_1
 \left  [ \sigma D_0^2 y_k^1 +   D_0D_1 y_k^1  \right]+ \\
\epsilon^2 \left [ \sigma^2 D_0^2 y_k^1 +2\sigma D_0D_1y_k^1+D_1^2 y_k^1
\right ] \bigg  \}\\
+\epsilon^2 \omega_1^2 D_0^2r_k  +\epsilon^3 \mathcal{D}_2 r_k
\end{multline}
with
\begin{multline}
  \mathcal{D}_2 r_k = \frac{1}{\epsilon}\left( \frac{d^2 r_k}{d t^2}
    -\omega_1^2 D_0^2 r_k \right)= 2\omega_1( \sigma D_0^2r_k+    D_0D_1 r_k) \\
+  \epsilon  \left[ \sigma^2 D_0^2 r_k +2\sigma D_0D_1r_k+D_1^2 r_k \right].
\end{multline}
We plug previous expansions into \eqref{eq:yk..Phipcdforc}.
By identifying the coefficients of the powers of $\epsilon$ in the expansion of \eqref{eq:yk..Phipcdforc}, we get:
\begin{align}
\Bigg \{
  \begin{array}[h]{rl}
  &\omega_1^2D_0^2 y_k^1 + \omega_k^2 y_k^1=0, \  k=1 \dots, n\\
 & \omega_1^2 D_0^2 r_k + \omega_k^2 r_k=S_{2,k}, \  k=1 \dots, n \quad \text{ with } \label{eq:D02rk=S2kforce}\\
  \end{array} 
\end{align}

\begin{multline}
 S_{2,k}= 
-\Bigg \{ \frac{\delta_p \phi_k}{\epsilon^2} \Phi_{p-1}
   \left(\sum_i (\epsilon y_i^1+\epsilon^2r_i )\phi_i,\epsilon \right)+ 2 \omega_1 
[ D_0D_1 y_k^1 +\sigma D_0^2 y_k^1 ] +\lambda_k \omega_1 D_0 y_k^1
 \Bigg \}  \\
+f_k cos(T_0) -\epsilon R_k(y^1,r,\epsilon) 
\end{multline}
where we gather higher order terms in $R_k$ and to simplify, the
manipulations, we have set 
$$\delta_p \phi_l=(\phi_{l,p}-\phi_{l,p-1}),$$ so:
\begin{multline}
  S_{2,k}= -  \frac{\delta_p \phi_k}{\epsilon^2}  \left [c\left (\sum_i(\epsilon y_i^1+\epsilon^2r_i )\delta_p\phi_i \right)^2 +\frac{d}{\epsilon} \left(\sum_i(\epsilon y_i^1+\epsilon^2r_i )\delta_p\phi_i \right)^3   \right] \\
- 2 \omega_1 [D_0D_1 y_k^1 +\sigma D_0^2 y_k^1 ] -\lambda_k \omega_1 D_0 y_k^1   \\
+f_k cos(T_0) - \epsilon R_k(y^1,r,\epsilon).
\end{multline}
The formula may be expanded
\begin{multline}
    S_{2,k}=-\delta_p \phi_k \left[c\sum_{i,j} y_i^1y_j^1 \delta_p\phi_i \delta_p\phi_j +d \sum_{i,j,l} y_i^1y_j^1 y_l^1\delta_p\phi_i \delta_p\phi_j \delta_p\phi_l \right] \\
- 2 \omega_1 [D_0D_1 y_k^1 +\sigma D_0^2 y_k^1 ] -\lambda_k \omega_1 D_0 y_k^1   \\
+f_k cos(T_0) - \epsilon R_k(y^1,r,\epsilon)
\end{multline}


We set
 $\theta(T_0,T_1)= T_0+\beta(T_1)$ and we note that $D_0 \theta=1, \;
 D_1 \theta=D_1 \beta$; we solve the first set of equations
 \eqref{eq:D02rk=S2kforce}, imposing  initial Cauchy data for $k
 \neq 1$ of order $O(\epsilon)$
we get:
 \begin{equation}
\label{eq:y1=syst}
   y_1^1=a_1(T_1)cos(\theta), \; \text{ and }  y_k^1=O(\epsilon), \; k=2 \dots n
 \end{equation}
we put terms involving $y_k^1$ into $R_k$ for $k \ge 2$ and so we obtain
\begin{multline}
S_{2,1}= -\delta_p \phi_1\left[   c\left ( y_1^1\delta_p\phi_1 \right )^2 +d\left ( y_1^1 \delta_p\phi_1 \right ) ^3  \right] \\
-2 \omega_1 [ D_0D_1y_1^1 +\sigma D_0^2 y_1^1] -\lambda_1 \omega_1D_0 y_1^1  +f_1\cos(T_0) - \epsilon R_1(y^1,r,\epsilon) \text{ and }
\end{multline}
\begin{multline}
S_{2,k}= -\delta_p \phi_k\left[   c\left ( y_1^1\delta_p\phi_1 \right )^2 +d\left ( y_1^1 \delta_p\phi_1 \right ) ^3  \right] +\\
f_k\cos(T_0) - \epsilon R_k(y^1,r,\epsilon) \text{ for } k \neq 1.
 \label{eq:S2systf=}
\end{multline}
Using \eqref{eq:y1=syst}, we get:
\begin{multline}
  S_{2,1}= -\delta_p \phi_1\Big[   \frac{ca_1^2}{2}(1+\cos(2\theta))\left ( \delta_p\phi_1 \right )^2+\\\frac{da_1^3}{4}\left ( (\cos(3\theta)+3\cos(\theta)) ( \delta_p\phi_1)^3 \right )   \Big] + \\
2\omega_1[ D_1 a_1 sin(\theta)+ a_1D_1 \beta_1 \cos(\theta) +\sigma a_1
\cos(\theta)  ] +\lambda_1\omega_1 a_1 sin(\theta)\\
+f_1 [\sin(\theta)\sin(\beta)+\cos(\theta)\cos(\beta) ]- \epsilon  R_1(y^1,r,\epsilon) \text{ and }
\end{multline}
\begin{multline}
  S_{2,k}= -\delta_p \phi_k\Big[   \frac{ca_1^2}{2}(1+\cos(2\theta))\left ( \delta_p\phi_1 \right )^2+\\\frac{da_1^3}{4}\left ( (\cos(3\theta)+3\cos(\theta)) ( \delta_p\phi_1)^3 \right )   \Big] + \\
+f_k [\sin(\theta)\sin(\beta)+\cos(\theta)\cos(\beta) ] - \epsilon  R_k(y^1,r,\epsilon) \text{ for } k \neq 1.
\end{multline}

We gather the terms at angular frequency $1$ in $S_{2,1}$
\begin{multline}
  S_{2,1}= \delta_p \phi_1\Bigg[
-3\frac{da_1^3}{4} \cos(\theta)(\delta_p\phi_1)^3   +
  2\omega_1(a_1D_1\beta_1+\sigma a_1) + f_1 \cos(\beta)  \Bigg] \cos(\theta)\\
+ \Big [  \omega_1(  2D_1 a_1 + \lambda_1 a_1 ) +f_1 sin(\beta) \Big]  sin(\theta)+ S_{2,1}^{\sharp} - \epsilon  R(y^1,r,\epsilon)
\end{multline}
with
 \begin{multline}
   S_{2,1}^{\sharp}= -\delta_p \phi_1 \left[  \frac{ca_1^2}{2}(1+\cos(2\theta)) ( \delta_p\phi_1)^2  +\frac{da_1^3}{4} \cos(3\theta)(\delta_p\phi_1)^3  \right].
\end{multline}
\paragraph{Orientation}

If we enforce
\begin{multline}
  \label{eq:D1a1D1beta1syst}
\Bigg \{
\begin{array}[h]{rl}
  \omega_1 \big (2D_1a_1+\lambda_1 a_1 \big)&=-f_1 sin(\beta_1), ~~\text{ and }\\
~ 2\omega_1  \big ( a_1D_1 \beta_1 + \sigma a_1  \big )
  &=\frac{3da^3}{4} (\delta_p\phi_1)^4 -f_1cos(\beta_1) 
\end{array}
\end{multline}

 the right hand side
\begin{equation}
  S_{2,1}= S_{2,1}^{\sharp}-\epsilon R_1(y^1,r,\epsilon)
\end{equation}
contains no term at angular frequency $1$; 
for the other components, without any manipulation,  there is not such terms
, if we assume that all the
eigenfrequencies $\omega_k$ for $k=2 \dots n$ are not multiple of $\omega_1$
($\omega_k \neq q \omega_1$ for $q=1$ or $q=2$, $q=3$).
This will enable us to justify this expansion; previously, we study
the stationary solution of this approximate system and the stability of the
solution in a neighbourhood of this stationary solution. 

\subsubsection{Stationary solution and stability}
The situation is very close to the 1 d.o.f. case; except the
replacement of $d$ by  of $\tilde d =d(\delta_p \phi_1)^4$, the system
\eqref{eq:D1a1D1beta1syst} is the same as \eqref{eq:D1aD1beta1}; the
other components are zero. We state a similar proposition

\begin{proposition}
  When  $\sigma \le \frac{3 \tilde d \bar a^2}{4 \omega}
  -\frac{1}{2}\sqrt{\frac{9\tilde d^2 \bar a^4}{16 \omega^2}-\lambda_1^2}$
, the stationary solution of  \eqref{eq:D1a1D1beta1syst}
is stable in the  sense of Lyapunov (if the dynamic solution starts
close to the stationary one, it remains close and converges to it); to
the stationary case corresponds the approximate  solution of
\eqref{eq:yk..Phipcd} $y_1^1=\bar a_1 \cos(T_0+ \bar \beta_1), ~~ y_k^1=O(\epsilon),
~~ k=2, \dots, n$, it is periodic; for an initial data close enough to
the stationary solution, $y_1^1= a(T_1) \cos(T_0+  \beta_1(T_1)), ~~ y_k^1=O(\epsilon),
~~ k=2, \dots, n$ with $a,\beta_1$ solutions of  \eqref{eq:D1a1D1beta1syst}
with $d$ replaced by $\tilde d$
; they converge to the  stationary solution  $\bar a_1, \bar \beta_1$  when $T_1 \longrightarrow +\infty$.
\end{proposition}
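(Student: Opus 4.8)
The plan is to exploit the remark preceding the statement: the slow-flow system \eqref{eq:D1a1D1beta1syst} governing the pair $(a_1,\beta_1)$ is, after the substitution $d \mapsto \tilde d = d(\delta_p\phi_1)^4$ and $\lambda \mapsto \lambda_1$, formally identical to the one-degree-of-freedom system \eqref{eq:D1aD1beta1}. Since the non-resonance hypothesis keeps the remaining modal amplitudes at $y_k^1 = O(\epsilon)$ for $k \ge 2$, these enter only through the remainder, so the planar subsystem in $(a_1,\beta_1)$ decouples at leading order and the whole stability question reduces to the one already settled in Proposition \ref{prop:stab1ddl}. First I would write the stationary system, introduce the stationary point $(\bar a_1,\bar\beta_1)$, and perform the change of variable $y = \bar y + x$ exactly as done before the one-d.o.f. proposition, obtaining $\dot x = Jx + g(x)$ with $g(x) = o(\|x\|)$ and $J$ the Jacobian of the right-hand side.

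Next I would record that $J$ is the Jacobian of the one-d.o.f. case under the substitution, so that $\mathrm{tr}\,J = -\lambda_1$ and
$$\det J = \frac{\lambda_1^2}{4} - \left(\frac{9\tilde d\,\bar a^2}{8\omega} - \sigma\right)\left(\sigma - \frac{3\tilde d\,\bar a^2}{8\omega}\right).$$
By the same trinomial-in-$\sigma$ argument as in the scalar case, the stated bound $\sigma \le \frac{3\tilde d\bar a^2}{4\omega} - \frac12\sqrt{\frac{9\tilde d^2\bar a^4}{16\omega^2} - \lambda_1^2}$ gives $\det J > 0$; together with $\mathrm{tr}\,J = -\lambda_1 < 0$ this places both eigenvalues of $J$ in the open left half-plane. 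Applying the Poincar\'e--Lyapunov theorem \ref{th:poinc-lyapu} with $A = J$, $B = 0$ and $g$ time-independent then yields asymptotic stability: for initial data close enough to $(\bar a_1,\bar\beta_1)$ the solution $(a_1(T_1),\beta_1(T_1))$ of \eqref{eq:D1a1D1beta1syst} stays close to and converges to the stationary solution as $T_1 \to +\infty$. The periodicity of the stationary approximation $y_1^1 = \bar a_1 \cos(T_0+\bar\beta_1)$, together with $y_k^1 = O(\epsilon)$, is immediate once $\bar a_1,\bar\beta_1$ are constant.

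The main obstacle is not the stability computation itself, which is a verbatim transcription of the one-degree case; it is rather justifying the decoupling cleanly, namely that the higher modes $y_k^1$ ($k \ge 2$) may legitimately be held at $O(\epsilon)$ and swept into the remainder without feeding back into the $(a_1,\beta_1)$ flow at the order considered. This is precisely where the hypothesis that $\omega_k$ is not an integer multiple of $\omega_1$ enters, guaranteeing that no secular forcing at angular frequency $1$ is produced in the other components and that their contribution stays subordinate; I would state this explicitly before invoking Proposition \ref{prop:stab1ddl} to transfer the conclusion.
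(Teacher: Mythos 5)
Your proposal is correct and follows essentially the same route as the paper, which likewise treats this proposition as a verbatim transcription of the one-degree-of-freedom case: the slow-flow system \eqref{eq:D1a1D1beta1syst} coincides with \eqref{eq:D1aD1beta1} after the substitutions $d \mapsto \tilde d = d(\delta_p\phi_1)^4$, $\lambda \mapsto \lambda_1$, so the trace/determinant analysis of the Jacobian and the Poincar\'e--Lyapunov theorem \ref{th:poinc-lyapu} apply unchanged. Your additional care about sweeping the higher modes $y_k^1 = O(\epsilon)$, $k \ge 2$, into the remainder under the non-resonance hypothesis is exactly how the paper handles the decoupling when deriving \eqref{eq:D1a1D1beta1syst}.
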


\subsubsection{Convergence of the expansion}

In order to prove that $r$ is bounded, after the elimination of terms at frequency $1$, we write back the equations with the variable $t$, for the second set of equations of \eqref{eq:D02rk=S2k}.
\begin{equation}
  \omega_1^2 \ddot r_k  +\omega_k^2 r_k=\tilde S_{2,k} \quad \text{ for } k=1, \dots n
\end{equation}
with
\begin{equation}
  \tilde S_{2,1}=S_{2,1}^{\sharp}-\epsilon \tilde R_1(y^1,r,\epsilon)
\end{equation}
where
 \begin{multline}
   S_{2,1}^{\sharp}= -\delta_p \phi_1 \bigg [  \frac{c(a_1(\epsilon t))^2}{2}(1+\cos(2(\tilde \omega_{\epsilon}t+\beta_{1}(\epsilon t)) ( \delta_p\phi_1)^2  \\
 +\frac{da_1^3}{4} \cos(3(\tilde \omega_{\epsilon}t+\beta_{1}(\epsilon t))(\delta_p\phi_1)^3  \bigg]
\end{multline}

and
\begin{multline}
  S_{2,k}= -\delta_p \phi_k\Bigg[   \frac{c(a_1(\epsilon t)^2}{2}(1+\cos(2 (\tilde \omega_{\epsilon}t+\beta_{1}(\epsilon t) ))\left ( \delta_p\phi_1 \right )^2+\\\frac{da_1^3}{4}\left ( (\cos(3(\tilde \omega_{\epsilon}t+\beta_(\epsilon t))+3\cos((\tilde \omega_{\epsilon}t+\beta_{1}(\epsilon t))) ( \delta_p\phi_1)^3 \right )   \Bigg]  \\
 - \epsilon  R_k(y^1,r,\epsilon) \text{ for } k \neq 1
\end{multline}
where
\begin{equation}
  \tilde R_k(y^1,r,\epsilon)= R_k(y^1,r,\epsilon)-{\cal D}_2 r_k -
  \lambda_k \bigl (\frac{d r_k}{d t}-\omega_k D_0 r_k \bigr )
\end{equation}

\begin{proposition}
  Under the assumption that $\omega_k$ and $\omega_1$ are $\mathbb{Z}$ independent for $k \ne 1$, there exists $\gamma>0$ such that for all $t \le t_{\epsilon} =\frac{\gamma}{\epsilon}$, the solution of \eqref{eq:yk..Phipcdforc} with initial data
  \begin{gather}
y_1(0)    =\epsilon a_{1,0} +O(\epsilon^2) , \quad \dot y_1(0)=-\epsilon \omega
a_{1,0}sin(\beta_{1,0}) +O(\epsilon^2), \\\quad y_k(0)=O(\epsilon^2), \quad \dot y_k(0)=0
  \end{gather}
and with the initial data close to the stationary solution
$$|a_{1,0} -\bar a_1 | \le \epsilon C_1, ~~ |\beta_{1,0}- \bar \beta_1|
\le \epsilon C_1$$

satisfy the following expansion
\begin{align}
  y_1(t)&= \epsilon a_1(\epsilon t)\cos(\tilde \omega_{\epsilon} t+\beta_1(\epsilon t))+\epsilon^2r_1(\epsilon,t) \text{ with } \\
y_k(t)&=\epsilon^2r_k(\epsilon,t)
\end{align}
with $a_1, \beta_1$ solution of \eqref{eq:D1a1D1beta1syst} and
with $r_k$ uniformly bounded in ${\cal C}^{2}(0,t_{\epsilon})$ for $k=1,\dots n$
and $\omega_1, \phi_1$ are the eigenvalue and eigenvectors defined in
\eqref{eq:vect-val-propre} and $a_1\beta_1$ are solution of \eqref{eq:D1a1D1beta1syst}
\end{proposition}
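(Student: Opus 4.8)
The plan is to reduce the statement to a single application of lemma~\ref{eq:lemmew-syst}, exactly as in the one degree of freedom convergence result (proposition~\ref{prop:conv-dev-forc}); the only genuinely new features are the several components $r_k$ and the non-resonance hypothesis on the $\omega_k$. The starting point is the second set of equations in \eqref{eq:D02rk=S2kforce}, rewritten in the physical variable $t$ as $\omega_1^2 \ddot r_k + \omega_k^2 r_k = \tilde S_{2,k}$ with $\tilde S_{2,k} = S_{2,k}^{\sharp} - \epsilon \tilde R_k$. For $k=1$ the source $S_{2,1}^{\sharp}$ carries only the harmonics $2$ and $3$ of the fast phase $\theta$ (the harmonic $1$ having been removed by enforcing the modulation equations \eqref{eq:D1a1D1beta1syst}), while for $k\neq 1$ the source keeps all harmonics $1,2,3$; in every case their amplitudes depend on $a_1(\epsilon t)$ and $\beta_1(\epsilon t)$, so none of these functions is periodic.

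First I would replace these slowly modulated sources by the genuinely periodic functions obtained by freezing $a_1(\epsilon t)\to\bar a_1$ and $\beta_1(\epsilon t)\to\bar\beta_1$. Here the preceding stability proposition is essential: because the initial data are taken $O(\epsilon)$-close to the stationary solution and $\sigma$ lies in the stability range, the trajectory of \eqref{eq:D1a1D1beta1syst} stays $O(\epsilon)$-close to $(\bar a_1,\bar\beta_1)$ over the whole interval $[0,t_\epsilon]$, whence $|a_1(\epsilon t)-\bar a_1|\le \epsilon C_1$ and $|\beta_1(\epsilon t)-\bar\beta_1|\le \epsilon C_2$, and consequently the discrepancy between $S_{2,k}^{\sharp}$ and its frozen periodic counterpart is bounded by $\epsilon C_3$ uniformly in $t$. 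This uniform $O(\epsilon)$ error is absorbed into the remainder, so that each equation acquires the form $(\text{periodic source}) - \epsilon g_k$ demanded by the lemma.

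Next I would verify the hypotheses of lemma~\ref{eq:lemmew-syst}. The frozen sources are periodic and bounded; for $k=1$ the construction \eqref{eq:D1a1D1beta1syst} makes the source orthogonal to $e^{\pm i t}$, so it is non-resonant against the natural frequency $\omega_1$ of the $r_1$ oscillator. For $k\neq 1$ the source contains the harmonics $q\tilde\omega_\epsilon$, $q=1,2,3$; since $\tilde\omega_\epsilon=\omega_1+\epsilon\sigma$ and the eigenfrequencies are assumed $\mathbb{Z}$-independent from $\omega_1$ (in particular $\omega_k\neq q\omega_1$ for $q=1,2,3$), none of these harmonics resonates with the natural frequency $\omega_k$ of the $r_k$ oscillator. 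Finally $g=\tilde R$ is, componentwise, a polynomial in the variables $r,y^1,\epsilon$ with coefficients that are bounded functions of $t$, hence Lipschitz on bounded subsets, which is the last hypothesis of the lemma. Applying lemma~\ref{eq:lemmew-syst} then produces a $\gamma>0$ for which each $r_k$ is uniformly bounded in $C^2(0,t_\epsilon)$, and substituting back through \eqref{eq:y1=syst} yields the announced expansion.

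The main obstacle I anticipate is the interplay between the two distinct mechanisms that keep the sources non-resonant: the enforced modulation equations \eqref{eq:D1a1D1beta1syst}, which kill the harmonic $1$ for the driven mode $k=1$, and the arithmetic $\mathbb{Z}$-independence, which keeps the harmonics $1,2,3$ away from every other $\omega_k$. The delicate point is that the fast frequency is the $\epsilon$-shifted value $\tilde\omega_\epsilon$ rather than $\omega_1$, so one must guarantee that no small divisor of the form $|q\tilde\omega_\epsilon-\omega_k|$ degenerates as $\epsilon\to 0$; the $\mathbb{Z}$-independence assumption furnishes a lower bound that is uniform in $\epsilon$, and this is precisely what allows the $O(\epsilon)$ discrepancy absorbed into $g$ to be controlled on the long interval $[0,\gamma/\epsilon]$ without the bound on $r$ deteriorating.
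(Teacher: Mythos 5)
Your proposal is correct and follows essentially the same route as the paper: reduce to lemma~\ref{eq:lemmew-syst} by taking as sources the (frozen) periodic functions and $g=\tilde R$, with non-resonance for $k=1$ coming from the enforced modulation equations \eqref{eq:D1a1D1beta1syst} and for $k\neq 1$ from the $\mathbb{Z}$-independence of $\omega_k$ and $\omega_1$. If anything, your write-up is more careful than the paper's own terse proof of this proposition, which calls the $S_k$ ``periodic'' without acknowledging their slow modulation by $a_1(\epsilon t),\beta_1(\epsilon t)$; the freezing step you supply --- replacing them by their values at $(\bar a_1,\bar\beta_1)$ and absorbing the $O(\epsilon)$ discrepancy into the remainder, exactly as in the one degree of freedom forced case, proposition~\ref{prop:conv-dev-forc} --- is precisely what the paper implicitly relies on through the hypothesis that the initial data be $\epsilon$-close to the stationary solution.
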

\begin{corollary}
  The solution of \eqref{eq:system-force}, \eqref{eq:Phi=} with 
$$\phi_1^T u(0)=\epsilon a_{1,0}, ~~ \phi_1^T \dot u(0)=-\epsilon \omega_1
a_{1,0} \sin(\beta_{1,0}), ~~ \phi_k^T u(0)=O(\epsilon^2), ~~ \phi_k^T  \dot u(0)=0$$
with $\omega_k, \phi_k$  the eigenvalues and eigenvectors defined in \eqref{eq:vect-val-propre}.
\begin{equation}
\text{ is }  ~~~ u(t)=\sum_{k=1}^n y_k(t) \phi_k
\end{equation}
with the expansion of $y_k$ of previous proposition.
\end{corollary}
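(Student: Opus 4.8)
The plan is to obtain the corollary from the previous proposition simply by undoing the modal change of function \eqref{eq:u=yphi}. Because $M$ and $K$ are symmetric positive definite, the generalised eigenvectors $\phi_1,\dots,\phi_n$ of \eqref{eq:vect-val-propre} form a basis and the linear map $(y_1,\dots,y_n)\mapsto u=\sum_{k=1}^n y_k\phi_k$ is invertible; the $M$-orthonormality $\phi_k^T M\phi_l=\delta_{kl}$ gives the explicit modal projection $y_k=\phi_k^T M u$. Under this correspondence a solution $u$ of \eqref{eq:system-force}, \eqref{eq:Phi=} is exactly a solution $(y_k)_{k=1}^n$ of the modal system \eqref{eq:yk..Phipcdforc}, so it remains only to transport the data and the conclusions through this isomorphism.

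First I would translate the Cauchy data: applying the $M$-orthogonal projection onto the modes to the prescribed $u(0)$ and $\dot u(0)$ produces precisely the modal initial data required by the previous proposition, namely $y_1(0)=\epsilon a_{1,0}+O(\epsilon^2)$, $\dot y_1(0)=-\epsilon\omega_1 a_{1,0}\sin(\beta_{1,0})+O(\epsilon^2)$ and $y_k(0)=O(\epsilon^2)$, $\dot y_k(0)=0$ for $k\ge2$; in particular the closeness conditions $|a_{1,0}-\bar a_1|\le\epsilon C_1$ and $|\beta_{1,0}-\bar\beta_1|\le\epsilon C_1$ are inherited, so the stability hypothesis of the proposition is satisfied.

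Then I would invoke the previous proposition on the modal system, which yields, for $t\le t_\epsilon=\gamma/\epsilon$, the expansions $y_1(t)=\epsilon a_1(\epsilon t)\cos(\tilde\omega_\epsilon t+\beta_1(\epsilon t))+\epsilon^2 r_1(\epsilon,t)$ and $y_k(t)=\epsilon^2 r_k(\epsilon,t)$ for $k\ge2$, with $a_1,\beta_1$ solving \eqref{eq:D1a1D1beta1syst} and each $r_k$ uniformly bounded in ${\cal C}^2(0,t_\epsilon)$. Reassembling by \eqref{eq:u=yphi} gives
\begin{equation}
u(t)=\epsilon a_1(\epsilon t)\cos(\tilde\omega_\epsilon t+\beta_1(\epsilon t))\,\phi_1+\epsilon^2\sum_{k=1}^n r_k(\epsilon,t)\,\phi_k,
\end{equation}
which is the announced expansion of $u$ assembled from the $y_k$.

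The only point needing a word of justification --- the nearest thing to an obstacle in an otherwise immediate argument --- is that the reassembled remainder keeps the claimed uniform ${\cal C}^2$ control. Since the sum over $k$ is finite and each $r_k$ is uniformly bounded in ${\cal C}^2(0,t_\epsilon)$ by the proposition, the combination $\sum_{k=1}^n r_k(\epsilon,t)\phi_k$ is uniformly bounded in ${\cal C}^2(0,t_\epsilon)$, with a constant depending only on $n$ and the fixed eigenvectors; no estimate couples the different modes, so once the isomorphism and the data translation are in place the conclusion follows exactly as for the free-vibration corollary.
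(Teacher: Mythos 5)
Your proposal is correct and follows essentially the same route as the paper, which simply remarks that the corollary is an easy consequence of the proposition together with the change of function \eqref{eq:u=yphi}; you merely spell out the details (the modal projection $y_k=\phi_k^T M u$, the transport of the Cauchy data, and the finite-sum reassembly of the remainders) that the paper leaves implicit.
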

\begin{proof}

  For the proposition, we use  lemma   \ref{eq:lemmew-syst}. Set $S_1=S_{2,1}^{\sharp}, ~~ S_k=S_{2,k} $ for $k=1, \dots n$; as we have enforced \eqref{eq:D1a1D1beta1}, the functions $S_k$ are periodic, bounded, and are orthogonal to $e^{\pm it}$, we have assumed  that $\omega_k$ and $\omega_1$ are $\mathbb{Z}$ independent for $k \ne 1$; so  $S$ satisfies the lemma hypothesis. Similarly, set $g=\tilde R$, it is a polynomial in $r$ with coefficients which are bounded functions , so it is lipschitzian on the bounded subsets of $\mathbb R$, it satisfies the hypothesis of  lemma   \ref{eq:lemmew-syst} and so the proposition is proved.
The corollary is an easy consequence of the proposition and the change of function \eqref{eq:u=yphi}
\end{proof}

\subsubsection{Maximum of the stationary solution}
As equation \eqref{eq:D1a1D1beta1syst} is similar to the equation
\eqref{eq:D1aD1beta1}  of the 1 d.o.f. case, we get also that the
stationary solution reaches its maximum amplitude to the frequency of
the free periodic solution.

Consider the stationary solution of \eqref{eq:D1a1D1beta1syst}, it satisfies

\begin{align}
\Bigg \{
  \begin{array}[h]{rl}
\lambda_1 a_1 \omega_1 &=-f_1 \sin(\beta_1) \\
a \left (2\omega_1\sigma -\frac{3 \tilde{d} a^2}{4} \right) &=-f_1\cos(\beta_1)
\end{array}
\end{align}
 manipulating, we get that $a_1$ is solution of the equation:
\begin{equation}
  f(a_1,\sigma)=\lambda_1^2 a_1^2 \omega^2 +a_1^2 \left (2\omega_1 \sigma-\frac{3\tilde{d} a_1^2}{4} \right)^2-f_1^2=0.
\end{equation}
As for the 1 d.o.f. case, we can state:

\begin{proposition}
  The stationary solution of \eqref{eq:D1a1D1beta1syst}
satisfies
\begin{align}
\Bigg \{
  \begin{array}[h]{rl}
  \lambda_1 a_1 \omega_1 &+f_1 \sin(\beta_1)=0 \\
2a_1 \omega_1\sigma -\frac{3 \tilde{d} a^3}{4} &+f_1\cos(\beta_1)=0
\end{array}
\end{align} 
it reaches its maximum amplitude for $\sigma=\frac{3\tilde d a_1^2}{8\omega_1} $ and $\beta_1=\frac{\pi}{2}+ k \pi$; the excitation is at the frequency 
$$\tilde \omega_{\epsilon}= \omega_1+3\epsilon \frac{\tilde{d} a_1^2}{8
  \omega_1}, ~ \text{ with } ~\tilde{d}=d (\Phi_{1,p}-\Phi_{1,p-1})^4 ~
~ \text{ and } ~ F=\lambda_1 \omega_1 a_1$$
where $\Phi_{1}$ is the eigenvector of the underlying linear system
associated to $\omega_1$; 
$ \tilde \omega_{\epsilon}$ is the frequency of  the free periodic solution  \eqref{eq:nualpha};
for this frequency, the approximation (of the  solution up to the order $\epsilon$)  is periodic:
\begin{align}
  y_1(t)&=\epsilon\frac{f_1}{\lambda_1 \omega_1}\sin(\tilde \omega_{\epsilon}
  t)+\epsilon^2 r(\epsilon,t) \\
y_k(t)&= \epsilon^2 r_k(\epsilon,t)
\end{align}

\end{proposition}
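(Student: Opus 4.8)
The plan is to reduce the statement to the one-degree-of-freedom result already established, and then repeat the implicit-function computation verbatim. As observed just before the statement, the modulation system \eqref{eq:D1a1D1beta1syst} governing the resonant mode is formally identical to the scalar system \eqref{eq:D1aD1beta1} once one replaces $d$ by the effective coefficient $\tilde d = d(\delta_p\phi_1)^4$, the forcing $F$ by the modal forcing $f_1=\phi_1^T F$, and $(\omega,\lambda)$ by $(\omega_1,\lambda_1)$. First I would take the two stationary relations $\lambda_1 a_1\omega_1 = -f_1\sin\beta_1$ and $a_1\bigl(2\omega_1\sigma - \tfrac{3\tilde d a_1^2}{4}\bigr) = -f_1\cos\beta_1$, square and add them, and use $\sin^2\beta_1 + \cos^2\beta_1 = 1$ to eliminate $\beta_1$. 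This produces the amplitude--frequency relation
\[
f(a_1,\sigma) = \lambda_1^2 a_1^2\omega_1^2 + a_1^2\Big(2\omega_1\sigma - \tfrac{3\tilde d a_1^2}{4}\Big)^2 - f_1^2 = 0 ,
\]
the exact analogue of the scalar amplitude equation.

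Next I would treat this relation as an implicit definition of $a_1$ as a function of $\sigma$. Off the degenerate locus one has $\partial f/\partial a_1 \neq 0$, so the implicit function theorem gives a smooth branch $a_1(\sigma)$ with $\partial a_1/\partial\sigma = -(\partial f/\partial\sigma)/(\partial f/\partial a_1)$. Since $\partial f/\partial\sigma = 4a_1^2\omega_1\bigl(2\omega_1\sigma - \tfrac{3\tilde d a_1^2}{4}\bigr)$, the critical points of $a_1(\sigma)$ are precisely the solutions of $2\omega_1\sigma = \tfrac{3\tilde d a_1^2}{4}$, that is $\sigma = \tfrac{3\tilde d a_1^2}{8\omega_1}$. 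To confirm a maximum I would evaluate the second derivative at this critical point, where $\partial a_1/\partial\sigma = 0$ forces $\partial^2 a_1/\partial\sigma^2 = -(\partial^2 f/\partial\sigma^2)/(\partial f/\partial a_1)$; there the cross term $2\omega_1\sigma - \tfrac{3\tilde d a_1^2}{4}$ vanishes, so $\partial f/\partial a_1 = 2a_1\lambda_1^2\omega_1^2 > 0$ and $\partial^2 f/\partial\sigma^2 = 8a_1^2\omega_1^2 > 0$, whence $\partial^2 a_1/\partial\sigma^2 < 0$ and the amplitude is indeed maximal.

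Finally I would substitute $\sigma = \tfrac{3\tilde d a_1^2}{8\omega_1}$ back into the second stationary equation: the term $2a_1\omega_1\sigma$ cancels $\tfrac{3\tilde d a_1^3}{4}$, leaving $f_1\cos\beta_1 = 0$, hence $\beta_1 = \tfrac{\pi}{2} + k\pi$. The first equation then gives, on the branch with $a_1>0$ and $f_1>0$, the relation $f_1 = \lambda_1\omega_1 a_1$, equivalently $a_1 = f_1/(\lambda_1\omega_1)$. Choosing $\beta_1 = -\tfrac{\pi}{2}$ one has $\cos(T_0+\beta_1) = \sin(T_0)$, so the leading term $y_1^1 = a_1\cos(T_0+\beta_1)$ becomes $\tfrac{f_1}{\lambda_1\omega_1}\sin(\tilde\omega_\epsilon t)$, which is periodic and reproduces the displayed expansion (with $y_k = \epsilon^2 r_k$ for $k\ge 2$); the vanishing of $D_1 a_1$ and $D_1\beta_1$ at the stationary point confirms periodicity. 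Since $\tilde\omega_\epsilon = \omega_1 + \epsilon\sigma = \omega_1 + 3\epsilon\tfrac{\tilde d a_1^2}{8\omega_1}$, this excitation frequency coincides with the free periodic frequency $\nu_\epsilon$ of \eqref{eq:nualpha}, which closes the argument.

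The algebra is routine once the substitution $d\mapsto\tilde d$ is in place, so I do not expect a genuine obstacle here; the only point deserving care is the legitimacy of the reduction itself, namely that the off-resonant coordinates $y_k^1 = O(\epsilon)$ for $k\ge 2$ contribute nothing to the resonant modulation equation at leading order. This rests on the $\mathbb Z$-independence hypothesis $\omega_k \neq q\omega_1$ invoked in the convergence proposition, which guarantees that the only slowly modulated amplitude is that of the mode indexed by $1$, so that the scalar system \eqref{eq:D1a1D1beta1syst} with effective coefficient $\tilde d$ faithfully governs the stationary amplitude and its maximum.
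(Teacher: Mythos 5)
Your proposal is correct and follows essentially the same route as the paper: the paper likewise reduces \eqref{eq:D1a1D1beta1syst} to the scalar system \eqref{eq:D1aD1beta1} via $d\mapsto\tilde d=d(\delta_p\phi_1)^4$, $F\mapsto f_1$, $(\omega,\lambda)\mapsto(\omega_1,\lambda_1)$, forms the same amplitude--frequency relation $f(a_1,\sigma)=\lambda_1^2a_1^2\omega_1^2+a_1^2\bigl(2\omega_1\sigma-\tfrac{3\tilde d a_1^2}{4}\bigr)^2-f_1^2=0$, and invokes the identical implicit-function-theorem computation from the 1 d.o.f. case. Your version is in fact slightly more careful than the paper's (you correct the paper's $8a^2\omega^4$ typo to $8a_1^2\omega_1^2$, derive $f_1=\lambda_1\omega_1 a_1$ rather than the misprinted $F=\lambda_1\omega_1 a_1$, and make explicit the role of the $\mathbb{Z}$-independence hypothesis in justifying the reduction), but these are refinements of the same argument, not a different one.
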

As for the 1 d.o.f. case we can remark the following points.
\begin{rem}
   This value of $\sigma=\frac{3\tilde{d} a_1^2}{8\omega_1} $ is
  indeed smaller than  the maximal value that  $\sigma$ may reach 
  in order that the system be stable and that the previous expansion  converges as
indicated in  proposition \ref{prop:conv-dev-forc}.
\end{rem}
\begin{rem}
  We note also that when the stationary solution reaches its maximum
  amplitude we have $ f_1=\lambda_1 \omega_1 a_1$ and so we can recover the
  damping ratio $\lambda_1$ from such a forced vibration experiment;
  this is a close link with the linear case (see for example
  \cite{geradin-rixen} or the English translation
  \cite{geradin-rixen-eng}). This is quite interesting in practice as
  the damping ratio is usually difficult to measure. Obviously, we can
  recover the damping ratio for other frequencies by performing
  other experiments.

We can also consider this result as a stability of the process used in
the linear case with respect to the appearance of a small non-linearity.
\end{rem}

\subsubsection{Numerical solution}
\label{subsub:numericals}
We consider numerical solution of \eqref{eq:system-force} with \eqref{eq:Phi=};
we have chosen $M=I$; $u=0$ at both ends, so  $K$ is the classical
matrix
$$k
\begin{pmatrix}
  2 & -1 & \hdotsfor[]{3}\\
-1 & 2 & -1 & \hdotsfor{2} \\
0&-1 &2&-1 & \dots \\
\hdotsfor[]{5} \\
 \hdotsfor[]{3} &-1 &2
\end{pmatrix}
;$$
$C=\lambda I$ with $\lambda=1/2$; for numerical balance, we have
computed $\frac{u}{\epsilon}$; with the choice $p=1$ we have
$\Phi_1=\epsilon [c u_1^2 + d u_1^3]$ with $c=1, d=1$.
In figure \ref{fig:phase-space9ddl}, we find 3 curves in phase space
for components $1,3,6$ of the system.
In figure \ref{fig:fourier9ddlc1}, we find the Fourier transform of
the components; some components have the same transform; the graphs
are slightly non symmetric.
\begin{figure}[p] 
\includegraphics[height=7cm, width=10cm]{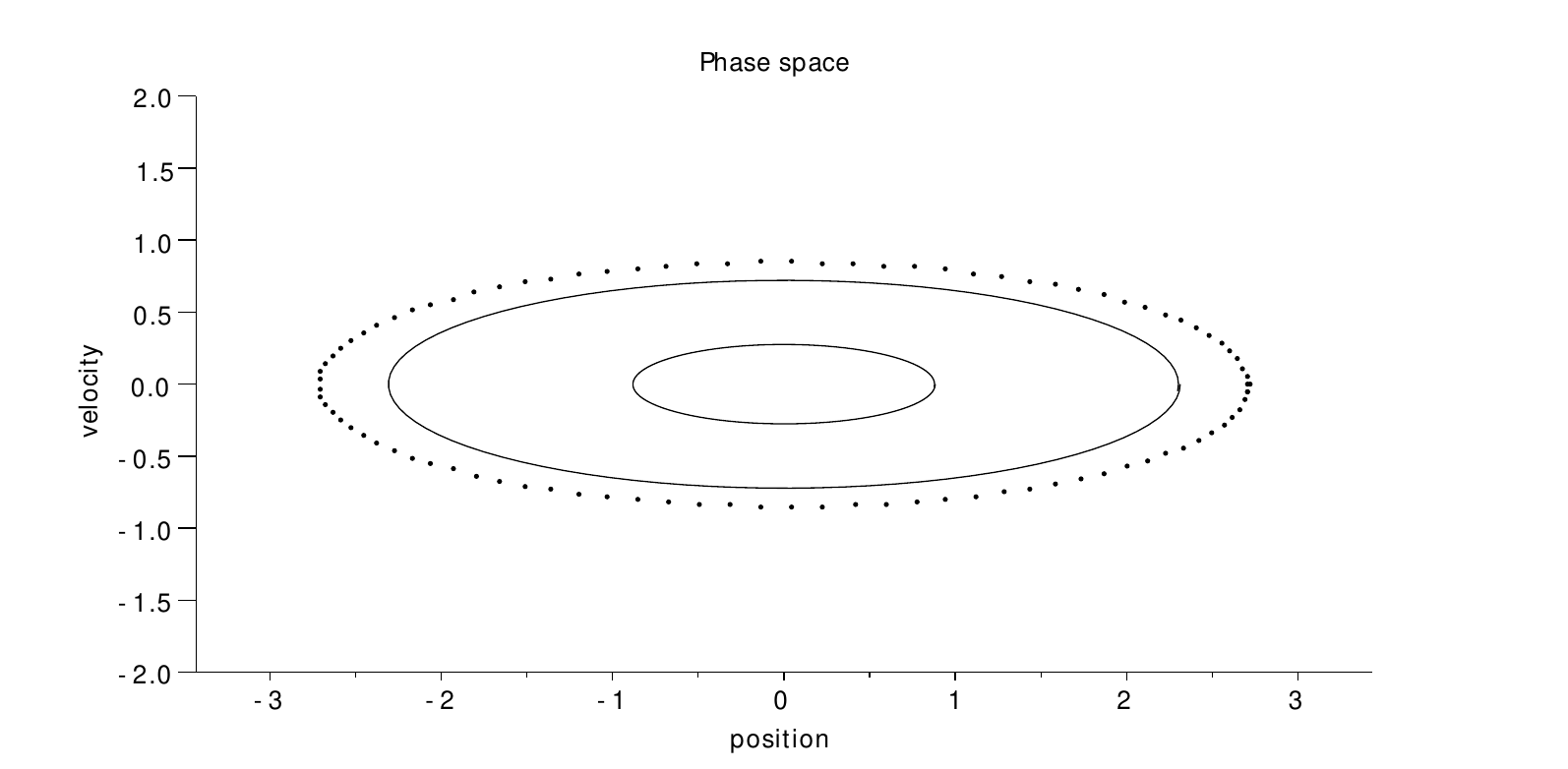}
\caption{Phase portrait of a system with 9 d.o.f. for $ \omega_{\epsilon}= 0.3128868 $}
\label{fig:phase-space9ddl}
\end{figure}
\begin{figure}[p] 
\includegraphics[height=7cm, width=10cm]{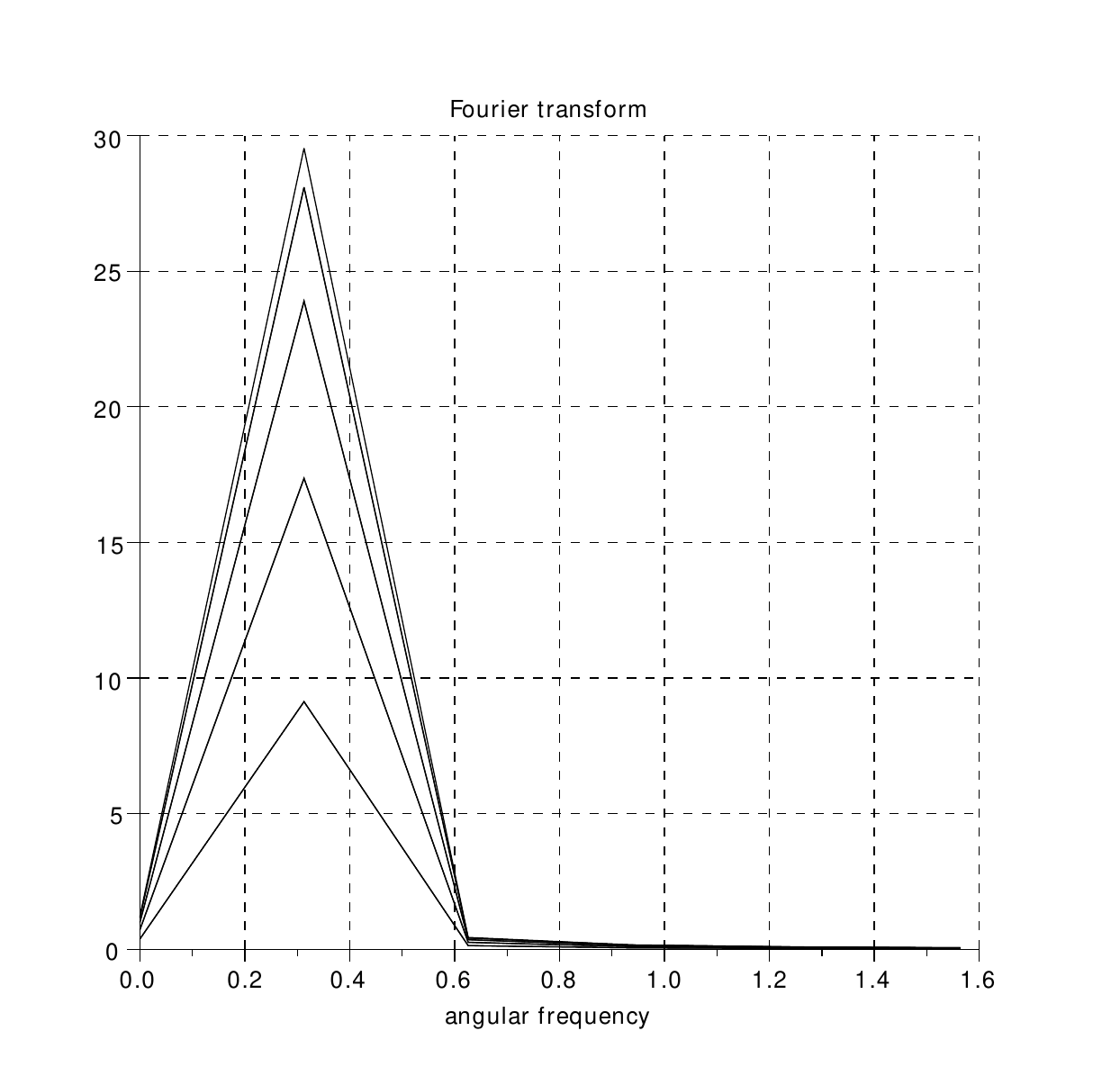}
\caption{Phase portrait of a system with 9 d.o.f. for $ \omega_{\epsilon}= 0.3128868 $}
\label{fig:fourier9ddlc1}
\end{figure}


\section{Conclusion}
For differential systems modeling spring-masses vibrations with non
linear springs,  we have derived and rigorously proved a double scale analysis of
periodic solution of free vibrations (so called non linear normal
modes); for damped vibrations with
periodic forcing with frequency close to free vibration frequency ( the
so called primary resonance case), we have obtained an asymptotic
expansion and derived that the amplitude is maximal at the frequency of
the non linear normal mode.
Such non linear vibrating systems linked to a bar
generate acoustic waves; an analysis of the dilatation of a one-dimensional nonlinear crack impacted by a periodic elastic wave,
a smooth model of the crack  may be carried over with a delay
differential equation, \cite{junca-lombard09}. 
\paragraph{Acknowledgment}
We thank S. Junca  for his stimulating interest.

\section{Appendix}
\label{sec:appendix}
\subsection{Inequalities for differential equations}
\begin{lemma}

\label{eq:lemmew }
  Let   $w_{\epsilon}$  be solution of
  \begin{align}
\label{eq:w"+w=}
    w"+w=S(t,\epsilon)+\epsilon g(t,w,\epsilon) \\
w(0)=0, \quad w'(0)=0.
  \end{align}
If the right hand side satisfies the following  conditions
\begin{enumerate}
\item $S$  is a sum of periodic bounded functions:
  \begin{enumerate}
  \item 
for all $t$ and for all $\epsilon$ small enough, $S(t,\epsilon) \le M$
\item $\int_0^{2\pi}e^{i t}S(t,\epsilon)dt=0, \quad \int_0^{2\pi}e^{-i
    t}S(t,\epsilon)dt=0$  uniformly for  $\epsilon$ small enough
  \end{enumerate}
\item for all $R>0$, 
there exists $k_R$ such that for  $|u|\le R$ and $|v|\le R$, the
inequality  $|g(t,u,\epsilon)-g(t,v,\epsilon)|\le k_R |u-v|$ holds and
$|g(t,0,\epsilon)|$is bounded; in other words $g$ is locally
lipschitzian with respect to u.
\end{enumerate}
then, there exists $\gamma>0$ such that for $\epsilon$ small enough,
$w_{\epsilon}$ is uniformly bounded in $C^{2}(0,T_{\epsilon})$ with $T_{\epsilon}=\frac{\gamma}{\epsilon}$
\end{lemma}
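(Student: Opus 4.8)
The plan is to represent the solution through the Duhamel formula for the harmonic operator $w''+w$, and then to separate the contribution of the source $S$ — which I will show stays bounded for \emph{all} time thanks to the non-resonance (orthogonality) hypothesis — from the contribution of the small perturbation $\epsilon g$, which I will control by a bootstrap/Gronwall argument over the window $[0,T_\epsilon]$. Since $w''+w=S+\epsilon g$ with zero initial data, variation of parameters gives
$$w(t)=\int_0^t \sin(t-s)\,\bigl(S(s,\epsilon)+\epsilon\,g(s,w(s),\epsilon)\bigr)\,ds .$$
Expanding $\sin(t-s)=\sin t\cos s-\cos t\sin s$, the source part is
$$w_S(t)=\sin t\int_0^t S(s,\epsilon)\cos s\,ds-\cos t\int_0^t S(s,\epsilon)\sin s\,ds .$$

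First I would prove that $w_S$ is bounded uniformly in $t$ and in small $\epsilon$. Writing $S$ as its finite sum of periodic pieces, the products $S\cos s$ and $S\sin s$ are periodic, and hypothesis 1(b), namely $\int_0^{2\pi}e^{\pm it}S\,dt=0$, says precisely that their resonant (frequency-one) mean vanishes; hence the antiderivatives $\int_0^t S\cos s\,ds$ and $\int_0^t S\sin s\,ds$ are themselves bounded, and together with $\sup|S|\le M$ from 1(a) this yields $\sup_t|w_S(t)|=:M_0<\infty$. Next I fix $R=2M_0+1$ and run a continuation argument. By hypothesis 2 the Lipschitz bound gives $|g(s,w,\epsilon)|\le |g(s,0,\epsilon)|+k_R|w|\le G_0+k_RR=:B_R$ whenever $|w|\le R$; setting $t^{*}=\sup\{t\le T_\epsilon:|w(s)|\le R\ \text{for}\ s\le t\}$, on $[0,t^{*}]$ the representation yields
$$|w(t)|\le M_0+\epsilon\int_0^t|g(s,w(s),\epsilon)|\,ds\le M_0+\epsilon\,t\,B_R\le M_0+\gamma B_R ,$$
so choosing $\gamma$ small enough that $\gamma B_R<M_0+1$ forces $|w(t)|<R$ on $[0,t^{*}]$; the usual open–closed argument then gives $t^{*}=T_\epsilon$, i.e. the $C^0$ bound holds on all of $[0,\gamma/\epsilon]$.

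To finish I would differentiate the representation, obtaining $w'(t)=\int_0^t\cos(t-s)\bigl(S+\epsilon g\bigr)\,ds$: its source part is bounded by the same antiderivative estimate (now using $\cos(t-s)=\cos t\cos s+\sin t\sin s$) and its perturbation part is again $\le\gamma B_R$, giving a uniform $C^1$ bound; finally $w''=-w+S+\epsilon g$ is bounded directly, which produces the $C^2$ bound and the conclusion with $T_\epsilon=\gamma/\epsilon$.

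The step I expect to be the real obstacle is the uniform-in-time boundedness of $w_S$, precisely because the horizon $T_\epsilon=\gamma/\epsilon$ diverges as $\epsilon\to 0$. A generic source would produce a secular term growing like $t$, which on this window would be of size $1/\epsilon$ and would destroy the $\epsilon^2 r$ scaling of the expansion; hypothesis 1(b) is exactly what removes the resonant frequency-one component and keeps those antiderivatives bounded. This is why the earlier construction enforced the conditions eliminating the frequency-one terms (for instance the equations fixing $D_1a$ and $D_1\beta$): they are what make hypothesis 1(b) hold in each application. The only additional care needed is that the periodic pieces of $S$ carry no frequency exactly equal to one, which in the multi-degree-of-freedom applications is guaranteed by the $\mathbb{Z}$-independence of the eigenfrequencies.
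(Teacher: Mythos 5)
Your proof is correct, and its skeleton coincides with the paper's: your source part $w_S$ is exactly the paper's auxiliary function $w_1$ (the Duhamel integral of $S$ alone), and both proofs bound it for all time by invoking the orthogonality hypothesis to exclude a secular, resonant contribution, at essentially the same level of rigor (your antiderivative justification is in fact slightly more explicit). Where you genuinely diverge is in the treatment of the $\epsilon g$ term. The paper performs the change of unknown $w=w_1+w_2$, notes that $g_2(t,w_2,\epsilon)=g(t,w_1+w_2,\epsilon)$ inherits the Lipschitz hypothesis, and applies the Bellman--Gronwall corollary to get the envelope $|w_2(t)|\le \frac{C}{k_R}\left(e^{\epsilon k_R t}-1\right)$ before the exit-time argument. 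You avoid Gronwall altogether: on the set where $|w|\le R$ you only use the crude bound $|g|\le B_R=G_0+k_R R$, giving $|w-w_S|\le \epsilon t\,B_R\le \gamma B_R$, and close with the continuation argument. This is more elementary, and it reveals that the full Lipschitz hypothesis is not needed for this lemma --- local boundedness of $g$ suffices --- which is precisely the setting of Lemma 6.3 of the cited work of Junca and Rousselet that the paper's proof says it simplifies; the price is a bound involving $k_R R$ rather than Gronwall's sharper constant $\tfrac{C}{k_R}(e^{k_R\gamma}-1)\approx C\gamma$, which tracks the dependence on the Lipschitz constant more finely. One further point in your favor: you bound $w'$ explicitly by differentiating the Duhamel representation, whereas the paper passes from the $C^0$ bound to the $C^2$ bound "since $w$ solves the equation", which handles $w''$ but is silent about $w'$; your version fills that small gap.
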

\begin{proof}
The proof  is close  to the proof of  lemma 6.3 of
\cite{junca-br10}; but it is technically simpler since here we assume
$g$ to be locally lipschitzian with respect to $u$ whereas it is only
bounded in \cite{junca-br10}.
  \begin{enumerate}
  \item We first  consider
  \begin{align}
    w_1"+w_1=S(t,\epsilon) \\
w_1(0)=0, \quad w_1'(0)=0
  \end{align}
as  $S$ is  a sum of periodic functions which are  uniformly
orthogonal to $e^{i t}$ and $e^{-i t}$, $w_1$ is bounded in  ${\cal C}^{2}(0,+\infty).$
\item Then we perform a change  of function: $w=w_1+w_2$, the following
  equalities hold
 \begin{align}
    w_2"+w_2= \epsilon g_2(t,w_2,\epsilon)\\
w_2(0)=0, \quad w_2'(0)=0
  \end{align}
with $g_2$ which satisfies the same hypothesis as $g$:

for all  $R>0$,
there exists $k_R$ such that for $|u|\le R$ and $|v|\le R$, the
following inequality holds $|g_2(t,u,\epsilon)-g_2(t,v,\epsilon)|\le k_R |u-v|$.
Using Duhamel principle, the solution of this equation satisfies:
\begin{equation}
  w_2=\epsilon \int_0^t\sin(t-s) g_2(s,w_2(s),\epsilon)ds
\end{equation}
from which
\begin{equation}
  |w_2(t)|\le \epsilon  \int_0^t |g_2(s,w_2(s),\epsilon)- g_2(s,0,\epsilon)|ds +
 \epsilon \int_0^t |g_2(s,0,\epsilon)| ds
\end{equation}
so if $|w| \le R$, hypothesis of lemma imply
\begin{equation}
   |w_2(t)|\le \epsilon \int_0^t  k_R|w_2| ds +\epsilon C t.
\end{equation}
A corollary of lemma of Bellman-Gronwall, see below, will enable to conclude.
It yields
\begin{equation}
  |w_2(t)| \le \frac{ C}{k_R}\left (   \exp(\epsilon k_Rt) -1 \right).
\end{equation}
Now set $T_{\epsilon}=\sup \{t| |w|\le R \}$, then we have
$$R  \le \frac{ C}{k_R}\left (   \exp(\epsilon k_Rt) -1 \right)$$
this shows that there exists $\gamma$ such that $|w_2| \le R$ for $t \le
T_{\epsilon}$, which means that it is  in $L^{\infty}(0,T_{\epsilon})$
for $T_{\epsilon}=\frac{\gamma}{\epsilon}$;  also, we have $w$ in
${\cal C}(0,T_{\epsilon})$ then as $w$ is solution of
\eqref{eq:w"+w=}, it is also bounded in 
  ${\cal C}^{2}(0,T_{\epsilon})$.
  \end{enumerate}
\end{proof}
\begin{lemma}
  (Bellman-Gronwall, \cite{bell-gronw,bellman-perturb}) Let  $u,\epsilon,\beta$ be continuous functions with $\beta \ge 0$, 
  \begin{equation}
    u(t) \le \epsilon(t) + \int_0^t\beta(s)u(s) ds \text{ for } 0\le t\le T
  \end{equation}
then 
\begin{equation}
   u(t) \le \epsilon(t) + \int_0^t \beta(s) \epsilon(s) \left[ \exp(\int_s^t \beta(\tau)d\tau \right ] ds
\end{equation}
\end{lemma}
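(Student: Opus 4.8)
The plan is to reduce the stated integral inequality to a linear differential inequality and then solve it by the classical integrating-factor method. First I would introduce the auxiliary function $v(t)=\int_0^t\beta(s)u(s)\,ds$, which satisfies $v(0)=0$ and, by continuity of $\beta$ and $u$, is continuously differentiable with $v'(t)=\beta(t)u(t)$. The hypothesis is precisely $u(t)\le\epsilon(t)+v(t)$, and since $\beta\ge0$ this yields
\begin{equation*}
v'(t)=\beta(t)u(t)\le\beta(t)\epsilon(t)+\beta(t)v(t),
\end{equation*}
that is, $v'(t)-\beta(t)v(t)\le\beta(t)\epsilon(t)$.

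Next I would multiply by the (strictly positive) integrating factor $\mu(t)=\exp\bigl(-\int_0^t\beta(\tau)\,d\tau\bigr)$, which turns the left-hand side into an exact derivative:
\begin{equation*}
\frac{d}{dt}\bigl[\mu(t)v(t)\bigr]\le\beta(t)\epsilon(t)\mu(t).
\end{equation*}
Integrating from $0$ to $t$ and using $v(0)=0$ gives
\begin{equation*}
\mu(t)v(t)\le\int_0^t\beta(s)\epsilon(s)\mu(s)\,ds.
\end{equation*}
Dividing by $\mu(t)$ converts the quotient $\mu(s)/\mu(t)$ into $\exp\bigl(\int_s^t\beta(\tau)\,d\tau\bigr)$, so that $v(t)\le\int_0^t\beta(s)\epsilon(s)\exp\bigl(\int_s^t\beta(\tau)\,d\tau\bigr)\,ds$. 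Substituting this back into $u(t)\le\epsilon(t)+v(t)$ reproduces exactly the asserted bound.

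Since this is the classical Gronwall inequality, I do not anticipate a genuine obstacle; the only points needing care are the differentiability of $v$ and the validity of the integrating-factor step, both of which follow immediately from the assumed continuity of $u$, $\epsilon$ and $\beta$, together with the crucial use of $\beta\ge0$ to preserve the direction of the inequality when passing from $u\le\epsilon+v$ to the bound on $v'$. If one preferred to avoid differentiation altogether, an equivalent route is to iterate the inequality $u\le\epsilon+v$ inside $v'=\beta u$, generating the exponential by Picard iteration; but the integrating-factor argument is the shortest and keeps every manipulation elementary.
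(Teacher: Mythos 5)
Your proof is correct. The paper itself gives no proof of this lemma: it is quoted as a classical result, with citations to Bellman's book and the Encyclopedia of Mathematics, so there is no internal argument to compare against. Your integrating-factor proof --- setting $v(t)=\int_0^t\beta(s)u(s)\,ds$, using $\beta\ge 0$ to pass from $u\le\epsilon+v$ to $v'-\beta v\le\beta\epsilon$, multiplying by $\exp\bigl(-\int_0^t\beta(\tau)\,d\tau\bigr)$, integrating, and substituting back --- is exactly the standard argument found in those references, and every step (the differentiability of $v$ from continuity of $\beta u$, the sign of the integrating factor, the direction of the inequalities) is sound as written.
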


\begin{lemma} ( a consequence of previous lemma, suited for
  expansions, see \cite{sanders-verhulst})
  Let $u$ be a positive function, $\delta_2 \ge 0$, $\delta_1 >0$ and
$$ u(t) \le \delta_2 t + \delta_1\int_0^tu(s)ds$$ then 
$$u(t) \le \frac{\delta_2}{\delta_1}\left ( exp(\delta_1 t) -1
\right) $$
\end{lemma}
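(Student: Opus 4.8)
The plan is to obtain this simply by specializing the preceding Bellman--Gronwall lemma to the present data. I would apply that lemma with the continuous ``forcing'' $\epsilon(t)=\delta_2 t$ (which is nonnegative for $t\ge 0$) and the constant kernel $\beta(s)\equiv \delta_1$; since $\delta_1>0$ the requirement $\beta\ge 0$ holds, and all three functions involved are continuous. With these choices the hypothesis of Bellman--Gronwall, namely $u(t)\le \epsilon(t)+\int_0^t \beta(s)u(s)\,ds$, becomes \emph{verbatim} the present hypothesis $u(t)\le \delta_2 t+\delta_1\int_0^t u(s)\,ds$, so the lemma applies directly and gives
\begin{equation*}
u(t)\le \delta_2 t+\int_0^t \delta_1(\delta_2 s)\,\exp\!\left(\int_s^t \delta_1\,d\tau\right)ds=\delta_2 t+\delta_1\delta_2\int_0^t s\,e^{\delta_1(t-s)}\,ds.
\end{equation*}
Everything then reduces to evaluating this one elementary integral.

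The second step is the computation $\int_0^t s\,e^{\delta_1(t-s)}\,ds=e^{\delta_1 t}\int_0^t s\,e^{-\delta_1 s}\,ds$, which I would carry out by a single integration by parts, yielding $\int_0^t s\,e^{-\delta_1 s}\,ds=\frac{1}{\delta_1^2}-\frac{t}{\delta_1}e^{-\delta_1 t}-\frac{1}{\delta_1^2}e^{-\delta_1 t}$. Multiplying by $\delta_1\delta_2 e^{\delta_1 t}$ and adding back the leading $\delta_2 t$, the decisive point is that the two terms linear in $t$ cancel exactly, leaving precisely $\frac{\delta_2}{\delta_1}\bigl(e^{\delta_1 t}-1\bigr)$, which is the asserted bound.

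There is no genuine conceptual obstacle here; the only thing demanding care is the bookkeeping in the integration by parts and verifying the cancellation of the $\delta_2 t$ terms, so the ``hard part'' is merely not slipping a sign. As an alternative that sidesteps the integral form of Bellman--Gronwall, I could instead set $U(t)=\int_0^t u(s)\,ds$, so that $U'=u\le \delta_2 t+\delta_1 U$, and integrate the resulting differential inequality $U'-\delta_1 U\le \delta_2 t$ with the integrating factor $e^{-\delta_1 t}$; this passes through the same elementary integral and arrives at the same closed form, and may in fact be the cleaner route to present.
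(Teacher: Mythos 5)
Your proposal is correct and is exactly the route the paper intends: the paper states this lemma as ``a consequence of previous lemma'' (Bellman--Gronwall) with no further proof, and your specialization $\epsilon(t)=\delta_2 t$, $\beta\equiv\delta_1$ followed by the elementary integration by parts (with the exact cancellation of the $\delta_2 t$ terms) is precisely that derivation, carried out in full.
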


\begin{lemma}
\label{eq:lemmew-syst}
  Let $v_{\epsilon}=[v_1^{\epsilon}, \dots, v_N^{\epsilon}]^T$ be the solution of the following system:
\begin{equation}
\omega_1^2 (v_k^{\epsilon})"+\omega_k^2v_k^{\epsilon}=S_k(t)+\epsilon g_k(t,v_{\epsilon}).
\end{equation}
If   $\omega_1$ and $\omega_k$ are $\mathbb{Z}$ independent for all
$k=2 \dots N$ and 
the right hand side satisfies the following conditions with $M>0, \; C>0$ prescribed constants:
\begin{enumerate}
\item $S_k$ is a sum of bounded periodic functions, $|S_k(t)|\le M$ which satisfy the  non resonance conditions:

  \item $S_1$ is orthogonal to $e^{\pm it}$,
    i.e. $\int_0^{2\pi}S_1(t)e^{\pm it}dt=0$ uniformly for $\epsilon$
    going to zero

\item for all $R>0$ there exists $k_R$ such that for $\|u\| \le R$, $\|v\|\le R$, the following inequality holds for $k=1, \dots, N$ :
$$|g_k(t,u,\epsilon)-g_k(t,v,\epsilon)|\le k_R \|u-v\|$$ and $|g_k(t,0,\epsilon)|$ is bounded
\end{enumerate}
then there exists $\gamma>0$ such that for $\epsilon$ small enough
$v_{\epsilon}$ is bounded in ${\cal C}^{2}(0,T_{\epsilon})$ with $T_{\epsilon}=\frac{\gamma}{\epsilon}$
\end{lemma}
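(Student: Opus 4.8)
The plan is to mirror the proof of the scalar Lemma~\ref{eq:lemmew } componentwise, the only genuinely new features being the treatment of the non-resonant components $k\neq1$ and the bookkeeping needed to run a single continuation argument for the whole vector $v_\epsilon$. First I would split each component as $v_k^\epsilon=v_k^{(1)}+v_k^{(2)}$, where $v_k^{(1)}$ solves the purely linear forced problem $\omega_1^2 (v_k^{(1)})''+\omega_k^2 v_k^{(1)}=S_k(t)$ with zero Cauchy data, and $v_k^{(2)}$ carries the $\epsilon g_k$ term and again starts from rest.

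The boundedness of the linear parts $v_k^{(1)}$ on $(0,+\infty)$ is where the two hypotheses on the $S_k$ enter. For $k=1$ the homogeneous equation has natural frequency $1$, so a secular term could only be produced by a component of $S_1$ at frequency $1$; the orthogonality $\int_0^{2\pi}S_1(t)e^{\pm it}\,dt=0$ rules this out exactly as in the scalar case, giving a bounded $v_1^{(1)}\in\mathcal{C}^2(0,+\infty)$. For $k\neq1$ the homogeneous frequency is $\omega_k/\omega_1$, and since $\omega_1,\omega_k$ are $\mathbb{Z}$-independent this ratio cannot coincide with any of the integer harmonics present in the periodic forcing $S_k$; hence each $v_k^{(1)}$ is again bounded, with no orthogonality condition needed.

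Next I would pass to the $v_k^{(2)}$, which satisfy $\omega_1^2(v_k^{(2)})''+\omega_k^2 v_k^{(2)}=\epsilon \tilde g_k(t,v^{(2)},\epsilon)$ with $\tilde g_k(t,v^{(2)},\epsilon)=g_k(t,v^{(1)}+v^{(2)},\epsilon)$. Because $v^{(1)}$ is bounded and $g$ is locally Lipschitz with $|g_k(t,0,\epsilon)|$ bounded, $\tilde g$ inherits the same structure: locally Lipschitz in its second argument with $\|\tilde g(t,0,\epsilon)\|$ bounded. Writing each component by the Duhamel formula associated to $\omega_1^2 w''+\omega_k^2 w=h$ and taking the vector norm, the common Lipschitz constant $k_R$ (shared by all components by hypothesis) yields, as long as $\|v^{(2)}\|\le R$, an estimate of the form $\|v^{(2)}(t)\|\le \epsilon C t+\epsilon k_R\int_0^t\|v^{(2)}(s)\|\,ds$.

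Finally I would invoke the expansion-adapted consequence of the Bellman--Gronwall lemma stated above (with $\delta_2=\epsilon C$ and $\delta_1=\epsilon k_R$) to get $\|v^{(2)}(t)\|\le \frac{C}{k_R}\bigl(e^{\epsilon k_R t}-1\bigr)$, and close the argument by a bootstrap step: setting $T_\epsilon=\sup\{t:\|v^{(2)}\|\le R\}$ and choosing $\gamma$ so that $\frac{C}{k_R}\bigl(e^{\gamma k_R}-1\bigr)\le R$ shows that $\|v^{(2)}\|\le R$ persists up to $t=\gamma/\epsilon$. Boundedness in $L^\infty(0,T_\epsilon)$, and then in $\mathcal{C}^2(0,T_\epsilon)$ via the equation, follows. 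The main difficulty is organizational rather than analytical: since the components are coupled through $g_k(t,v_\epsilon)$, the Gronwall estimate and the stopping time must be carried out simultaneously for the whole vector in a single norm, not component by component; the one genuinely new analytic point over the scalar lemma is the off-resonance boundedness of $v_k^{(1)}$ for $k\neq1$, which is supplied by the $\mathbb{Z}$-independence of $\omega_1$ and $\omega_k$.
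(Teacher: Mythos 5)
Your proposal is correct and follows essentially the same route as the paper's own proof: the same splitting into a linear part (bounded on $(0,+\infty)$ via the orthogonality condition for $k=1$ and the $\mathbb{Z}$-independence of $\omega_1,\omega_k$ for $k\neq 1$) plus a remainder handled by Duhamel, the Bellman--Gronwall corollary, and a continuation argument up to $T_\epsilon=\gamma/\epsilon$. Your remark that the Gronwall estimate must be run in a single vector norm (since the components are coupled through $g_k$) is a point the paper's componentwise notation glosses over, but it is an organizational refinement rather than a different method.
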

\begin{proof}
 \begin{enumerate}
 \item 
We first  consider the linear system
\begin{gather}
\label{eq:vk1}
 \omega_1^2 (v_{k,1})"+\omega_k^2v_{k,1}=S_k  \\
 v_{k,1}(0)=0  \text{ and }  (v_{k,1})'=0
\end{gather}

 For $k=1$, with hypothesis 1.a,  $S_1$ is a sum of bounded periodic
 functions; it is orthogonal to  $e^{\pm it}$, there is no resonance.
For $k\neq 1$, there is no  resonance as
$\frac{\omega_{k}}{\omega_{1}} \notin \mathbb{Z}$ with  hypothesis
1.b.

So  $v_{k,1}$ belongs to $C^{(2)}$ for $k=1,..., n.$
\item Then we perform a change of function
$$v_{k}^{\epsilon}=v_{k,1}+ v_{k,2}^{\epsilon}$$
and  $v_{k,2}^{\epsilon}$  are solutions of the following system :

\begin{gather}
\label{eq:vk2}
  \omega_1^2 (v_{k,2})"+\omega_k^2v_{k,2}=\epsilon
  g_{k,2}(t,v_{k,2},\epsilon), ~ k=1, \dots, N\\
  v_{k,2}^{\epsilon}(0)=0, ~ (v_{k,2}^{\epsilon})'=0, ~ k=1, \dots, N
\end{gather}
with $$ g_{k,2}(t,....,v_{k,2}^{\epsilon},....)=g_{k}(t,...,v_{k,1}+v_{k,2}^{\epsilon},....)$$
where  $~ g_{k,2} ~$  satisfies the same  hypothesis as $g_{k}$:\\
for all $R>0$ there exists $k_R$ such that for  $ \parallel u_{k} \parallel \le R$, $ \parallel v_{k} \parallel \le R$, the following inequality holds for $k=1, \dots, N$ :
\begin{equation}
\label{eq:gk2}
 \parallel g_{k,2}(t,u_{k},\epsilon)-g_{k,2}(t,v_{k},\epsilon)\parallel \le k_R  \parallel u_{k}-v_{k} \parallel.
\end{equation}

Using Duhamel principle, the 
 solution or the equation \eqref{eq:vk2} satisfies:
\begin{equation}
  v_{k,2}^{\epsilon}=\epsilon \int_0^t\sin(t-s) g_{k,2}(s,v_{k,2}^{\epsilon}(s),\epsilon)ds
\end{equation}
so 
\begin{multline}
  \parallel  v_{k,2}^{\epsilon}(t)  \parallel  \le \epsilon
  \int_0^t  \parallel g_{k,2}(s,v_{k,2}^{\epsilon}(s),\epsilon)-
  g_{k,2}(s,0,\epsilon) \parallel  ds + \\
 \epsilon \int_0^t \parallel  g_{k,2}(s,0,\epsilon) \parallel  ds
\end{multline}
so with \eqref{eq:gk2}, we obtain
\begin{equation}
 \parallel  v_{k,2}^{\epsilon}(t) \parallel \le \epsilon \int_0^t  k \parallel v_{k,2}^{\epsilon}(t)  \parallel ds +\epsilon C t
\end{equation}
We shall conclude using  Bellman-Gronwall
 lemma; we obtain

\begin{equation}
\parallel  v_{k,2}(t) \parallel \le \frac{C}{k_R}(exp(\epsilon k_R t)-1)
\end{equation}

this shows that there exists $\gamma$ such that $|v_{k,2}^{\epsilon}| \le R$ for $t \le
T_{\epsilon}$, which means that it is  in $L^{\infty}(0,T_{\epsilon})$
for $T_{\epsilon}=\frac{\gamma}{\epsilon}$;  also, we have $v_k$ in
${\cal C}(0,T_{\epsilon})$ then as $v_k$ is solution of
\eqref{eq:w"+w=}, it is also bounded in 
  ${\cal C}^{2}(0,T_{\epsilon})$.

 \end{enumerate}

\end{proof}

\begin{theorem}
\label{th:poinc-lyapu}
( of Poincar\'e-Lyapunov, for  example see \cite{sanders-verhulst})
Consider the equation 
$$\dot x=(A+B(t))x +g(t,x), \; x(t_0)=x_0, \; t\ge t_0$$
where $x,x_0 \in \mathbf{R}^n  $,  $A$ is a constant matrix $n\times
n$  with all its eigenvalues with negative real parts; $B(t)$ is a
matrix which is  continuous  with the property $\lim_{t \rightarrow +\infty} \|B(t)\|=0$.
The vector field  is continuous with respect to  $t$ and $x$ is
continuously differentiable with respect to $x$ in a neighborhood of $x=0$; moreover 
$$g(t,x)= o(\|x\|) \text{ when } \; \|x\|\rightarrow 0 $$ uniformly in $t$.
Then, there exists  constants $C,t_0,\delta,\mu$ such that if $\|x_0\|<\frac{\delta}{C}$ 
$$\|x\| \le C\|x_0\|e^{-\mu(t-t_0)}, t \ge t_0$$ holds
\end{theorem}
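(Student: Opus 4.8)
The plan is to combine the exponential stability of the constant-coefficient part $\dot x = Ax$ with a Gronwall-type estimate, treating both $B(t)x$ and $g(t,x)$ as small perturbations. Since every eigenvalue of $A$ has negative real part, there exist constants $K \ge 1$ and $\alpha > 0$ such that $\|e^{At}\| \le K e^{-\alpha t}$ for all $t \ge 0$, which is the standard consequence of the Jordan decomposition. Writing the equation in integral form by the variation of constants (Duhamel) formula gives
\[
x(t) = e^{A(t-t_0)}x_0 + \int_{t_0}^t e^{A(t-s)}\bigl[B(s)x(s) + g(s,x(s))\bigr]\, ds,
\]
and taking norms yields
\[
\|x(t)\| \le K e^{-\alpha(t-t_0)}\|x_0\| + \int_{t_0}^t K e^{-\alpha(t-s)}\bigl(\|B(s)\|\,\|x(s)\| + \|g(s,x(s))\|\bigr)\, ds.
\]

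Next I would fix a small $\eta > 0$ (to be chosen below) and exploit the two smallness hypotheses. Because $\|B(t)\| \to 0$, there is a time $t_0$ beyond which $\|B(s)\| \le \eta$; because $g(t,x) = o(\|x\|)$ uniformly in $t$, there is a radius $\delta > 0$ with $\|g(s,x)\| \le \eta\|x\|$ whenever $\|x\| \le \delta$. On the set where $\|x(s)\| \le \delta$ the integrand is bounded by $2K\eta\, e^{-\alpha(t-s)}\|x(s)\|$. Introducing $\psi(t) = e^{\alpha(t-t_0)}\|x(t)\|$ absorbs the exponential kernel and produces the clean inequality
\[
\psi(t) \le K\|x_0\| + 2K\eta \int_{t_0}^t \psi(s)\, ds,
\]
to which the Bellman-Gronwall lemma of the appendix applies, giving $\psi(t) \le K\|x_0\|\, e^{2K\eta(t-t_0)}$ and hence
\[
\|x(t)\| \le K\|x_0\|\, e^{-(\alpha - 2K\eta)(t-t_0)}.
\]
Choosing $\eta$ so small that $\mu := \alpha - 2K\eta > 0$ and setting $C = K$ then gives the claimed bound.

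The delicate step, which I expect to be the main obstacle, is that the estimate $\|g(s,x)\| \le \eta\|x\|$ is only valid while $\|x(s)\| \le \delta$, so the Gronwall estimate is a priori conditional. I would close the loop with a continuation argument: assuming $\|x_0\| < \delta/C$, set $T = \sup\{t \ge t_0 : \|x(s)\| \le \delta \text{ for all } s \in [t_0,t]\}$. On $[t_0,T)$ the derived bound gives $\|x(t)\| \le C\|x_0\|\, e^{-\mu(t-t_0)} < \delta$, so $\|x\|$ stays strictly below $\delta$; by continuity of the trajectory this prevents $T$ from being finite, whence $T = +\infty$ and the estimate holds for all $t \ge t_0$. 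The only remaining subtlety is local existence and uniqueness of the solution, guaranteed by the continuity in $t$ and the $C^1$ dependence in $x$ assumed in the statement, which simultaneously legitimizes the Duhamel representation used at the outset.
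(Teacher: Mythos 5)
Your proof is correct, but there is nothing in the paper to compare it against: the paper does not prove this theorem at all. It is stated in the appendix as a quoted result, attributed to the literature (``for example see \cite{sanders-verhulst}''), and is then used as a black box to establish the stability of the stationary solution of the slow-flow equations. What you have written is essentially the standard proof found in the cited reference: spectral bound $\|e^{At}\|\le Ke^{-\alpha t}$, variation of constants, absorption of the exponential kernel via $\psi(t)=e^{\alpha(t-t_0)}\|x(t)\|$, Gronwall, and a bootstrap/continuation argument to keep the trajectory inside the ball $\|x\|\le\delta$ where the estimates $\|B(s)\|\le\eta$ and $\|g(s,x)\|\le\eta\|x\|$ are valid. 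Two features of your write-up deserve note. First, you correctly handle the slightly unusual quantifier structure of the statement: the time $t_0$ is among the constants produced by the conclusion, and indeed your argument generates it as the time after which $\|B(t)\|\le\eta$. Second, your proof is stylistically consonant with how the paper proves its own lemmas (the scalar and system lemmas of the appendix), which likewise rest on a Duhamel representation, the Bellman--Gronwall lemma stated there, and a $\sup\{t:\|w\|\le R\}$ continuation argument; so your argument could be spliced into the paper with no new tools, replacing the external citation by a self-contained proof.
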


\subsection{Numerical computations of Fourier transform }
Assuming a function $f$ to be almost-periodic, the fourier
coefficients are :
\begin{equation}
  \label{eq:four-ap}
  \alpha_n=\lim_{T \rightarrow +\infty} \int_0^T f(t) e^{-\lambda_n t} dt
\end{equation}
(for example, see Fourier coefficients of an almost-periodic function in http://www.encyclopediaofmath.org/).
For numerical purposes, we chose $T$ large enough and consider the
Fourier coefficients of a function of period $T$ equal to $f$ in this interval.

\begin{thebibliography}{}

\end{thebibliography}


\begin{thebibliography}{KPGV09}

\bibitem[bel]{bell-gronw}
Bellman and {G}ronwall inequality.
\newblock Encyclopedia of Mathematics. URL: http://www.encyclopediaofmath.org/.

\bibitem[Bel64]{bellman-perturb}
R. Bellman.
\newblock {\em Perturbation techniques in mathematics, physics, and
  engineering}.
\newblock Holt, Rinehart and Winston, Inc., New York, 1964.

\bibitem[BM55]{Bogolyu-Mitropo-ru}
N.  N. Bogolyubov and Yu.  A. Mitropol{\cprime}ski{\u\i}.
\newblock {\em Asimptoti\v ceskie metody v teorii neline\u\i nyh kolebani\u\i}.
\newblock Gosudarstv. Izdat. Tehn.-Teor. Lit., Moscow, 1955.

\bibitem[BM61]{Bogolyu-Mitropo-eng}
N.  N. Bogoliubov and Y.  A. Mitropolsky.
\newblock {\em Asymptotic methods in the theory of non-linear oscillations}.
\newblock Translated from the second revised Russian edition. International
  Monographs on Advanced Mathematics and Physics. Hindustan Publishing Corp.,
  Delhi, Gordon and Breach Science Publishers, New York, 1961.

\bibitem[BM62]{Bogolyu-Mitropo-fr}
N.  N. Bogolioubov and I.  A Mitropolski.
\newblock {\em Les m\'ethodes asymptotiques en th\'eorie des oscillations non
  lin\'eaires}.
\newblock Gauthier-Villars \& Cie, Editeur-Imprimeur-Libraire, Paris, 1962.

\bibitem[BR09]{benbrahim-tamtam09}
N. Ben Brahim and B. Rousselet.
\newblock Vibration d'une barre avec une loi de comportement localement non
  lin\'{e}aire.
\newblock In {\em Proceedings of "Tendances des applications math\'ematiques en
  Tunisie, Algerie, Maroc", Morocco (2009)}, pages 479--485, 2009.

\bibitem[BR13]{nbb-br-mult}
N. Ben Brahim and {B}. {R}ousselet.
\newblock Multiple scale expansion of peri0dic solutions of some nonlinear
  vibrating systems.
\newblock {\em in preparation}, 2013.

\bibitem[Bra]{benbrahimSmai}
N. Ben Brahim.
\newblock Vibration d'une barre avec une loi de comportement localement non
  lin\'eaire.
\newblock Communication au Congr\`es Smai 2009.

\bibitem[Bra10]{benbrahimGdrafpac}
N. Ben Brahim.
\newblock Vibration of a bar with a law of behavior locally nonlinear.
\newblock Affiche au GDR-AFPAC conference, 18-22 janvier 2010.

\bibitem[Gas]{gasmiSmai}
A. Gasmi.
\newblock M\'ethode de la moyenne et de double \'echelle pour syst\`eme de
  cordes en vibration non lin\'eaire.
\newblock Communication au Congr\`es Smai 2009.

\bibitem[GR93]{geradin-rixen}
M.  G\'eradin and D.  Rixen.
\newblock {\em Th\'eorie des vibrations. Application \`a la dynamique des
  structures.}
\newblock Masson, 1993.

\bibitem[GR97]{geradin-rixen-eng}
M.  G\'eradin and D.  Rixen.
\newblock {\em Mechanical vibrations : theory and application to structural
  dynamics}.
\newblock Chichester: Wiley, 1997.

\bibitem[Haz]{hazimSmai}
H.  Hazim.
\newblock Frequency sweep for a beam system with local unilateral contact
  modeling satellite solar arrays.
\newblock Communication au Congr\`es Smai 2009.

\bibitem[Haz10]{hazim-these}
H.  Hazim.
\newblock {\em Vibrations of a beam with a unilateral spring. Periodic
  solutions - Nonlinear normal modes}.
\newblock PhD thesis, U. Nice Sophia-Antipolis, J.A. Dieudonn\'e mathematical
  laboratory, 06108, Nice Cedex France, July 2010.
\newblock http://tel.archives-ouvertes.fr/tel-00520999/fr/.

\bibitem[HFR09]{hazim-ecssmt}
H.  Hazim, N.  Fergusson, and B.  Rousselet.
\newblock Numerical and experimental study for a beam system with local
  unilateral contact modeling satellite solar arrays.
\newblock In {\em Proceedings of the 11th European spacecraft structures,
  materials and mechanical testing conference (ECSSMMT 11)}, 2009.
\newblock http://hal-unice.archives-ouvertes.fr/hal-00418509/fr/.

\bibitem[HR09a]{hr-brgdr08}
H. Hazim and B. Rousselet.
\newblock Finite element for a beam system with nonlinear contact under
  periodic excitation.
\newblock In M. Deschamp A. Leger, editor, {\em Ultrasonic wave propagation in
  non homogeneous media}, springer proceedings in physics, pages 149--160.
  Springer, 2009.
\newblock http://hal-unice.archives-ouvertes.fr/hal-00418504/fr/.

\bibitem[HR09b]{hazim-tamtam09}
H. Hazim and B. Rousselet.
\newblock Frequency sweep for a beam system with local unilateral contact
  modeling satellite solar arrays.
\newblock In {\em Proceedings of "Tendances des applications math\'ematiques en
  Tunisie, Algerie, Maroc", Morocco (2009)}, pages 541--545, 2009.
\newblock http://hal-unice.archives-ouvertes.fr/hal-00418507/fr/.

\bibitem[JL01]{janin-lamarque01}
{{Janin, O.} and {Lamarque, C. H.}},
{Comparison of several numerical methods for mechanical systems with
    impacts.},
\newblock {\em Int. J. Numer. Methods Eng. },
51,
9,
1101-1132,
2001,
.
\bibitem[JBL13]{Bastien-Bernard-lamarque}
{{Bastien, J.} and {Bernardin, F.} and {Lamarque,
    C.H.}},
{Non smooth deterministic or stochastic discrete dynamical systems.
    Applications to models with friction or impact.},
\newblock {Mechanical Engineering and Solid Mechanics Series. London: ISTE;
    Hoboken, NJ: John Wiley \&; Sons. xvi},
2013.

\bibitem[JPS04]{jiang-pierre-shaw04}
D.  Jiang, C.  Pierre, and S.W. Shaw.
\newblock Large-amplitude non-lin{e}ar normal modes of piecewise linear
  systems.
\newblock {\em Journal of sound and vibration}, 2004.

\bibitem[jl09]{junca-lombard09}
   {S. Junca and  B. Lombard},
 \newblock   {Dilatation odf a one dimensional nonlinear crack impacted by a periodic elastic wave },
 \newblock	 {\em SIAM J. Appl. Math},	 {2009},	 {70-3}, {735-761},
  \newblock {http://hal.archives-ouvertes.fr/hal-00339279}.


\bibitem[JR09]{sj-brgdr08}
S.  Junca and B.  Rousselet.
\newblock Asymptotic expansion of vibrations with unilateral contact.
\newblock In M. Deschamp A. Leger, editor, {\em Ultrasonic wave propagation in
  non homogeneous media}, springer proceedings in physics, pages 173--182.
  Springer, 2009.

\bibitem[JR10]{junca-br10}
S.  Junca and B.  Rousselet.
\newblock The method of strained coordinates for vibrations with weak
  unilateral springs.
\newblock {\em The IMA Journal of Applied Mathematics}, 2010.
\newblock http://hal-unice.archives-ouvertes.fr/hal-00395351/fr/.

\bibitem[KPGV09]{nnm-kpgv}
G. Kerschen, M. Peeters, J.C. Golinval, and A.F. Vakakis.
\newblock Nonlinear normal modes, part 1: A useful framework for the structural
  dynamicist.
\newblock {\em Mechanical Systems and Signal Processing}, 23:170--194, 2009.

\bibitem[LL58]{MR0102191}
L. D. Landau and E. M. Lif{\v{s}}ic.
\newblock {\em Mekhanika}.
\newblock Theoretical Physics, Vol. I. Gosudarstv. Izdat. Fiz.-Mat. Lit.,
  Moscow, 1958.

\bibitem[LL60]{MR0120782}
L. D. Landau and E. M. Lifshitz.
\newblock {\em Mechanics}.
\newblock Course of Theoretical Physics, Vol. 1. Translated from the Russian by
  J. B. Bell. Pergamon Press, Oxford, 1960.

\bibitem[LL66]{MR0205515}
L. Landau and E. Lifchitz.
\newblock {\em Physique th\'eorique. {T}ome {I}. {M}\'ecanique}.
\newblock Deuxi\`eme \'edition revue et compl\'et\'ee. \'Editions Mir, Moscow,
  1966.

\bibitem[Mik10]{mikhlin10}
Y.  Mikhlin.
\newblock Nonlinear normal vibration modes and their applications.
\newblock In {\em Proceedings of the 9th Brazilian conference on dynamics
  Control and their Applications}, pages 151--171, 2010.

\bibitem[Mil06]{miller2006}
P.  D. Miller.
\newblock {\em Applied asymptotic analysis}, volume 75 of {\em Graduate Studies
  in Mathematics}.
\newblock American Mathematical Society, Providence, RI, 2006.

\bibitem[Mur91]{murdock91}
J.  A. Murdock.
\newblock {\em Perturbations}.
\newblock A Wiley-Interscience Publication. John Wiley \& Sons Inc., New York,
  1991.
\newblock Theory and methods.

\bibitem[Nay81]{nayfeh81}
A.  H. Nayfeh.
\newblock {\em Introduction to perturbation techniques}.
\newblock J. Wiley, 1981.

\bibitem[Nay86]{nayfeh86}
A.  H. Nayfeh.
\newblock Perturbation methods in nonlinear dynamics.
\newblock In {\em Nonlinear dynamics aspects of particle accelerators ({S}anta
  {M}argherita di {P}ula, 1985)}, volume 247 of {\em Lecture Notes in Phys.},
  pages 238--314. Springer, Berlin, 1986.

\bibitem[oL49]{lyapunov49}
A. M. Lyapunov or Liapounoff.
\newblock {\em The general problem of the stability of motion}.
\newblock Princeton University Press, 1949.
\newblock English translation by Fuller from Edouard Davaux's french
  translation (Probl\`eme g\'en\'eral de la stabilit\'e du mouvement, Ann. Fac.
  Sci. Toulouse (2) 9 (1907)); this french translation is to be found in
  url:http://afst.cedram.org/; originally published in Russian in Kharkov. Mat.
  Obshch, Kharkov in 1892.

\bibitem[Poi99]{poincare92-99}
H. Poincar\'e.
\newblock {\em M\'ethodes nouvelles de la m\'ecanique c\'eleste}.
\newblock Gauthier-Villars, 1892-1899.

\bibitem[Rou11]{rousselet:hal-perio-lip}
B. Rousselet.
\newblock {Periodic solutions of o.d.e. systems with a Lipschitz non linearity}.
\newblock July 2011.

\bibitem[Rub78]{rubenfeld78}
L. A. Rubenfeld.
\newblock On a derivative-expansion technique and some comments on multiple
  scaling in the asymptotic approximation of solutions of certain differential
  equations.
\newblock {\em SIAM Rev.}, 20(1):79--105, 1978.

\bibitem[SV85]{sanders-verhulst}
J.A. Sanders and F. Verhulst.
\newblock {\em Averaging methods in nonlinear dynamical systems}.
\newblock Springer, 1985.

\bibitem[VLP08]{vestroni08}
F. Vestroni, A. Luongo, and A. Paolone.
\newblock A perturbation method for evaluating nonlinear normal modes of a
  piecewise linear two-degrees-of-freedom system.
\newblock {\em Nonlinear Dynam.}, 54(4):379--393, 2008.

\end{thebibliography}

\def\cprime{$'$}

\end{document}